\def\limsup{\mathop{\overline{\rm lim}}}
\newtheoremstyle{sty}%
{}%
{}%
{\sl}%
{}%
{\bf}%
{}%
{.5em}%
{}%
\theoremstyle{sty}
\newcommand{\D}{\displaystyle}
\newcommand{\e}{{\rm e}}
\newcommand{\hfir}{H_{\nu}^{(1)}}
\newcommand{\hsec}{H_{\nu}^{(2)}}
\newcommand{\lommel}{S_{\mu,\,\nu}}
\newcommand{\sgn}{{\rm sgn}\,}
\newtheorem{theorem}{Theorem}[section]
\newtheorem{corollary}[theorem]{Corollary}
\newtheorem{remark}[theorem]{Remark}
\newtheorem{lemma}[theorem]{Lemma}
\newtheorem{proposition}[theorem]{Proposition}
\numberwithin{equation}{section}
\renewcommand{\Re}{{\rm Re}\,}
\renewcommand{\Im}{{\rm Im}\,}
\begin{document}

\title[Complex oscillation, function-theoretic quantization of ODEs]{On complex oscillation, {function-theoretic} quantization of  non-homogenous {periodic} ODEs and special functions}

\date{$22^{\textrm{nd}}$ May, 2011}

\author[Y. M. Chiang]{Yik-Man Chiang}

\address{Department of Mathematics, The Hong Kong
University of Science \& Technology, Clear Water Bay, Kowloon, Hong
Kong, China} \email{machiang@ust.hk}

\author[K. W. Yu]{Kit-Wing Yu}
\address{Department of Mathematics, The Hong Kong
University of Science \& Technology, Clear Water Bay, Kowloon,
Hong Kong, China} \email{kitwing@hotmail.com}

\dedicatory{Dedicated to the {seventieth} birthday of Lo Yang}

\begin{abstract}
New necessary and sufficient conditions are given for the quantization of a class of periodic second order non-homogeneous ordinary differential equations in the complex plane in this paper. The problem is studied from the viewpoint of complex oscillation theory first developed by Bank and Laine (1982, 1983) {and Gundersen and Steinbart (1994)}.  We show that when a solution is complex non-oscillatory (finite exponent of convergence of zeros) then the solution, which can be written as special functions, must degenerate. This gives a necessary and sufficient condition when the Lommel function has finitely many zeros in every branch and this is a type of quantization for the non-homogeneous differential equation. The degenerate solutions are of polynomial/rational-type functions which are of independent interest. In particular, this shows that complex non-oscillatory solutions of this class of differential equations are equivalent to the subnormal solutions considered in a previous paper of the authors (to appear). In addition to the asymptotics of special functions, the other main idea we apply in our proof is a classical result of E. M. Wright which gives precise asymptotic locations of large zeros of a functional equation.
\end{abstract}

\subjclass[2010]{34M05, 33C10, 33E30.}

\keywords{Bessel functions, Lommel's functions, {non-homogeneous} Bessel's differential equations, asymptotic expansions, exponent of convergence of zeros, Rouch\'{e}'s theorem, complex oscillation, zeros.}

\maketitle

\section{\bf Introduction and the main results}

Let $A(z)$ be a transcendental entire function, and that $f(z)$ an
entire function solution of the differential equation

    \begin{equation}\label{E:homogeneous-de}
        f^{\prime\prime}+A(z)f=0.
    \end{equation}\smallskip

We use the $\sigma(f)$ and $\lambda(f)$ to denote the {\it order} and \textit{exponent of convergence of zeros},  respectively, of an entire function $f(z)$. A solution is called \textit{complex oscillatory} if $\lambda(f)=+\infty$ and \textit{complex non-oscillatory} if $\lambda(f)<+\infty$. Reader can refer to \cite{Hayman:1975} or \cite{Laine:1993} for the notation and background related to Nevanlinna theory where this research originated. However, we shall not make use of the Nevanlinna theory in the rest of this paper. For earlier treatments of various complex oscillation problems considered, we refer the reader to,  for examples, \cite{Bank:Laine:1982, Bank:Laine:1983, Bank:Laine:Langley:1986, Bank:Laine:Langley:1989, Chiang:Ismail:2006, Chiang:Ismail:2010, Gundersen:Steinbart:1994, Huang:Chen:Li:2009, Shimomura:2002}.
 In this paper, we consider the complex oscillation problem of a class of non-homogeneous differential equations which includes the simple looking equation

\begin{equation}\label{E:Chiang-Yu-equation-speical-case}
f''+(\e^{2z}-\nu^2)f=\sigma \e^{(\mu+1)z},
\end{equation}\smallskip

\noindent as a special case, where $\mu,\,\nu,\,\sigma \in \mathbf{C}$ and $\sigma \neq 0$. It is well-known that {all solutions of} the equation {are entire}, see \cite{Laine:1993}. \smallskip

In \cite[Theorem 1.2]{Chiang:Yu:2008}, the authors gave several explicit solutions of (\ref{E:Chiang-Yu-equation-speical-case}) in terms of the sum of the Bessel functions of first and second kinds $J_\nu(\zeta)$ and $Y_\nu(\zeta)$, and the \textit{Lommel function} $\lommel(\zeta)$ (\S \ref{A:Bessel}). In fact, the general solution of (\ref{E:Chiang-Yu-equation-speical-case}) can be written as

\begin{equation}\label{E:Chiang-Yu-equation-speical-case-solutions}
f(z)=AJ_\nu(\e^z)+BY_\nu(\e^z)+\sigma \lommel(\e^z),
\end{equation}\smallskip

\noindent where $A,\,B \in \mathbf{C}$ and $\lommel(\zeta)$ is a \textit{particular integral} of the non-homogeneous Bessel differential equation

\begin{equation}\label{E:non-homogeneous-bessel-de}
\zeta^2 y''(\zeta)+\zeta y'(\zeta)+(\zeta^2-\nu^2)y(\zeta)=\zeta^{\mu+1}.
\end{equation}\smallskip

\noindent The functions $J_\nu(\zeta)$ and $Y_\nu(\zeta)$ are two \textit{linearly independent} solutions of the corresponding homogeneous Bessel differential equation of (\ref{E:non-homogeneous-bessel-de}). \smallskip

The Lommel function $S_{\mu,\,\nu}(\zeta)$, is a special function that plays important roles in numerous physical applications (see e. g. \cite{Lyth:Stewart:1996, Mobilia:Bares:2001, Walker:1904}), was first appeared to be studied by Lommel in \cite{Lo1876}. We refer the reader to \cite{Chiang:Yu:2008} and the references therein for further discussion about the background and the applications of the Lommel function.\smallskip

The authors' previous work \cite[Theorem 1.2]{Chiang:Yu:2008} concerns the \textit{subnormality} of the solutions of (\ref{E:Chiang-Yu-equation-speical-case}). We recall that an entire function $f(z)$ is called \textit{subnormal} if either

\begin{align}\label{D:subnormal-definition}
\limsup_{r \to +\infty}\frac{\log\log M(r,\,f)}{r}=0\quad \mbox{or} \quad \limsup_{r \to +\infty}\frac{\log T(r,\,f)}{r}=0
\end{align}\smallskip

\noindent holds, where $\D M(r,\, f)=\max_{|z|\le r}|f(z)|$ denotes the usual maximum modulus of the entire function $f(z)$ and $T(r,\, f)$ is the Nevanlinna characteristics of $f(z)$. We have shown that solutions of (\ref{E:Chiang-Yu-equation-speical-case}) are subnormal, that is, if (\ref{D:subnormal-definition}) holds, if and only if $(A,\,B)=(0,\,0)$ and either $\mu+\nu=2p+1$ or $\mu-\nu=2p+1$ holds for a non-negative integer $p$ in (\ref{E:Chiang-Yu-equation-speical-case-solutions}) and the subnormal solutions have the form {given by the formulae} (\ref{E:lommel-sol-special}) and (\ref{E:ck-special}). In other words, \textit{subnormal solutions and finite order solutions of {\rm (\ref{E:Chiang-Yu-equation-speical-case})} are equivalent}, see Corollary \ref{R:Equivalent-conditions} below. This provides a new \textit{non-homogeneous {function-theoretic} quantization-type} result for the equation (\ref{E:Chiang-Yu-equation-speical-case}) {whose explanation will be given in \S 8}. \smallskip

\begin{remark}\label{R:Chiang-Yu-results}\rm The authors also generalized the above results to a much more general equation (\ref{E:Chiang-Yu}) below (\cite[Theorem 1.4]{Chiang:Yu:2008}), and a number of interesting corollaries. For examples, orders of growth of the entire functions $S_{\mu,\,\nu}(\e^z)$ and ${\bf H}_\nu(\e^z)$ were determined and \textit{non-homogeneous {function-theoretic} quantization-type results} were also obtained. See \cite[Theorem 1.7, \S 6]{Chiang:Yu:2008} for details.
\end{remark}\smallskip

For homogeneous differential equation (\ref{E:homogeneous-de}), it is known that a solution $f(z)$ could have $\lambda(f)<+\infty$ but its growth being \textit{not subnormal}, that is,\\ $\D \limsup_{r \to +\infty}{\log\log M(r,\,f)}/{r}=+\infty$. Thus one may ask what is the relationship between subnormality of solutions and its exponent of convergence of zeros of (\ref{E:Chiang-Yu-equation-speical-case}). We show that the finiteness of the two measures are equivalent.\smallskip

Here are our main results:

\begin{theorem}\label{T:Chiang-Yu-Special} Let $f(z)$ be a solution of {\rm (\ref{E:Chiang-Yu-equation-speical-case})}. Then $\lambda(f)<+\infty$ if and only if $A=B=0$ in {\rm (\ref{E:Chiang-Yu-equation-speical-case-solutions})} and either $\mu+\nu=2p+1$ or $\mu-\nu=2p+1$ for a non-negative integer $p$  and

\begin{align}\label{E:lommel-sol-special}
S_{\mu,\,\nu}(\zeta)=\zeta^{\mu-1}\Bigg[\sum_{k=0}^p \frac{(-1)^kc_k}{\zeta^{2k}}\Bigg],
\end{align}\smallskip

\noindent where the coefficients $c_k,\,k=0,\,1,\ldots,\,p$, are defined by

\begin{align}\label{E:ck-special}
c_0=1\quad \mbox{and}\quad c_k=\prod_{m=1}^k [(\mu-2m+1)^2-\nu^2].
\end{align}
\end{theorem}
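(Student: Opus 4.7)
The plan is to handle sufficiency by direct verification and necessity by a Wright-type zero-counting argument carried out in the variable $\zeta = \e^z$. For sufficiency, if $\mu \pm \nu = 2p+1$ for a non-negative integer $p$, then at $m = p+1$ the factor $(\mu - 2m + 1)^2 - \nu^2$ vanishes, so $c_k = 0$ for $k \geq p+1$. The asymptotic expansion of $\lommel$ then closes up to the finite sum in (\ref{E:lommel-sol-special}), and $f(z) = \sigma \e^{(\mu-1)z}\sum_{k=0}^p (-1)^k c_k \e^{-2kz}$ is an exponential polynomial of order $1$; by Hadamard factorisation, $\lambda(f) \leq 1 < +\infty$.

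For necessity I would argue by contraposition. Setting $\zeta = \e^z$ gives $f(z) = A J_\nu(\zeta) + B Y_\nu(\zeta) + \sigma \lommel(\zeta)$. In sectors $|\arg \zeta| < \pi - \delta$ the standard asymptotic expansions give $J_\nu, Y_\nu \sim \sqrt{2/(\pi \zeta)}\cdot [\cos, \sin](\zeta - \nu\pi/2 - \pi/4)$ and $\lommel(\zeta) \sim \zeta^{\mu-1}\sum_k (-1)^k c_k \zeta^{-2k}$. Moreover, the known decomposition of $\lommel$ into the entire Lommel function $s_{\mu,\nu}$ plus a Bessel combination with explicit $\Gamma$-factor coefficients shows that unless $\mu \pm \nu$ is an odd positive integer, $\lommel$ itself contributes a nontrivial oscillatory Bessel term. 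Consequently, whenever any of the hypothesised conditions fails, $f$ admits a dominant balance of the form
$$f(z) = \sigma \e^{(\mu-1)z}\bigl(1+o(1)\bigr) + \e^{-z/2}\bigl(\alpha \e^{i\e^z} + \beta \e^{-i\e^z}\bigr) + \cdots$$
with $(\alpha, \beta) \neq (0, 0)$. To count the zeros I would invoke Wright's theorem on zeros of exponential sums $\sum p_j(w)\e^{\lambda_j w}$ applied with $w = \zeta = \e^z$: in a suitable sector of $z$ where the oscillatory term dominates, Wright places the $\zeta$-zeros of $\alpha \e^{i\zeta}+\beta \e^{-i\zeta}$ on curves with linear density in $|\zeta|$. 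Pulling each such zero back by $z = \log \zeta_k + 2\pi i n$, each $\zeta_k$ with $|\zeta_k| \leq \e^r$ produces about $r/\pi$ preimages inside $|z| \leq r$, so $n(r, 1/f) \gtrsim r\,\e^r$ and therefore $\lambda(f) = +\infty$, contradicting the hypothesis.

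The main obstacle will be making the Wright-theorem zero count rigorous when the Lommel perturbation $\sigma \e^{(\mu-1)z}$ dominates the oscillatory contribution on large portions of the $z$-plane: this requires a careful selection of sector (e.g., a narrow horizontal strip off the real axis where $|\Im \e^z|$ is controlled) and handling of Stokes phenomena for $\lommel$ near the boundary of $|\arg\zeta|<\pi$. A subsidiary task is to verify that the Stokes coefficients $\alpha, \beta$, computed explicitly in terms of $A$, $B$, $\sigma$, $\mu$, $\nu$, vanish simultaneously only on the ``quantization locus'' $\{A = B = 0\} \cap \{\mu \pm \nu = 2p+1\}$, which is exactly what yields both conclusions of the theorem at once.
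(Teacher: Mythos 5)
Your sufficiency argument is fine and is essentially the paper's (the series \eqref{E:ck-special} terminates at $k=p+1$, so $f$ degenerates to an exponential polynomial of finite order). The necessity half, however, has two genuine gaps, both sitting exactly where you have deferred the work. First, the dominant balance you propose to count zeros with is wrong in the region where you propose to use it: in the principal branch $|\arg\zeta|<\pi$ (equivalently the strip $|\Im z|<\pi$), the asymptotic expansion of $\lommel(\zeta)$ is purely algebraic, $\zeta^{\mu-1}\big[1+O(\zeta^{-2})\big]$, with \emph{no} oscillatory Bessel term at all --- $S_{\mu,\nu}=s_{\mu,\nu}+(\text{Bessel combination})$ is precisely the combination engineered to cancel the oscillation there. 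So once $A=B=0$, your form with $(\alpha,\beta)\neq(0,0)$ fails in the principal branch whether or not $\mu\pm\nu=2p+1$, and no zero count performed there can detect the failure of quantization. The oscillatory (``Stokes'') contribution appears only in the other branches, and extracting it requires the analytic continuation formulae $S_{\mu,\nu}(\zeta\e^{-m\pi i})=(-1)^m\e^{-m\mu\pi i}S_{\mu,\nu}(\zeta)+K_+\big[P_mH_\nu^{(1)}(\zeta)+\e^{-\nu\pi i}P_{m-1}H_\nu^{(2)}(\zeta)\big]$, the fact that $P_m$ and $P_{m-1}$ never vanish simultaneously (which forces $K_+=0$, i.e.\ the quantization condition), and a separate treatment of the degenerate continuation formulae when $\mu\pm\nu$ is an odd \emph{negative} integer, where polynomial coefficients $D_n^{\pm}(\zeta)=B_n(\zeta)\pm i\zeta C_n(\zeta)$ enter and one must show at least one has full degree. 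Your ``subsidiary task'' of computing $\alpha,\beta$ and showing they vanish only on the quantization locus is exactly this, and it is the bulk of the proof, not a verification.

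Second, your zero count invokes Wright's linear-density statement for $\alpha\e^{i\zeta}+\beta\e^{-i\zeta}$, which fails when one of $\alpha,\beta$ is zero (e.g.\ $B=iA\neq0$, so only the $H_\nu^{(1)}$ term survives): the pure exponential then has no zeros, and the infinitely many zeros of $f$ arise instead from the balance $\widehat{C}\e^{i\zeta}\approx-\widehat{\sigma}\zeta^{\mu-\frac{1}{2}}$ between the oscillatory term and the Lommel term. Locating those requires Wright's \emph{other} theorem, on the roots of $w\e^w=a$, after the substitution $w=i\zeta/(\frac{1}{2}-\mu)$; the resulting zeros sit at only logarithmic depth below a rotated real axis with quadratically spaced real parts, so the Rouch\'e contour must be assembled as a chain of long thin regions hugging that curve, with separate estimates on each piece (and a different, elementary contour when $\mu=\frac{1}{2}$). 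The obstacle you flag --- ``when the Lommel perturbation dominates the oscillatory contribution'' --- is therefore not a technicality to be absorbed by choosing a better sector: that competition is the actual mechanism producing the zeros. As it stands the proposal establishes neither that $A=B=0$ is forced nor that the condition $\mu\pm\nu=2p+1$ is forced.
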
\smallskip

The above result is a special case of the following Theorem {\ref{T:Chiang-Yu}}. We first introduce a set of more general coefficients. Suppose that $n$ is a positive integer and $A,\, B,\, L,\,M,\, N,\, \sigma,\, \sigma_i,\, \mu_j,\, \nu$ are complex numbers such that $L,\, M$ are non-zero and at least one of  $\sigma_j$, $j \in \{1,\, 2,\ldots,\,n\}$, being non-zero.

    \begin{theorem}\label{T:Chiang-Yu} Let $f(z)$ be an entire solution to the differential equation

        \begin{align} \label{E:Chiang-Yu}
        f''+2Nf'+\big[L^2M^2\e^{2Mz} &+(N^2 -\nu^2 M^2)\big]f \notag \\
        &=\sum_{j=1}^n \sigma_j L^{\mu_j+1}M^2\e^{[M(\mu_j+1)-N]z}.
        \end{align}\smallskip

    \noindent Then $f(z)$ is given by

    \begin{align}\label{E:f-solution}
    f(z)={\rm e}^{-Nz} \bigg[AJ_\nu(L{\rm e}^{Mz})+BY_\nu(L{\rm e}^{Mz})+\sum_{j=1}^n \sigma_j S_{\mu_j,\, \nu}(L{\rm e}^{Mz})\bigg].
    \end{align}\smallskip

    \noindent Moreover, suppose that all the $\Re(\mu_j)$ are distinct. Then we have $\lambda(f)<+\infty$ if and only if $A=B=0$ and for each non-zero $\sigma_j$, we have either

    \begin{equation}\label{E:conditions}
    \mu_j + \nu=2p_j+1 \quad \mbox{or}\quad \mu_j-\nu=2p_j+1,
    \end{equation}\smallskip

    \noindent where $p_j$ is a non-negative integer and

        \begin{equation}\label{E:lommel-sol}
        S_{\mu_j,\, \nu}(\zeta)=\zeta^{\mu_j-1}\left[\sum_{k=0}^{p_j}
        \frac{(-1)^k c_{k,\, j}}{\zeta^{2k}}\right],
        \end{equation}\smallskip

    \noindent where $c_{k,\,j}\,(k=0,\,1,\ldots,\,p-1;\, j=1,\,2,\ldots,\,n)$ are defined by

    \begin{equation}\label{E:ck}
    c_{0,\,j}=1\quad \mbox{and}\quad c_{k,\,j}=\prod_{m=1}^k [(\mu_j-2m+1)^2-\nu^2].
    \end{equation}
\smallskip
    \end{theorem}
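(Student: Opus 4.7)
The proof splits naturally into three parts: deriving the representation (\ref{E:f-solution}), verifying sufficiency, and establishing necessity of the quantization conditions.

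\textbf{Reduction to a non-homogeneous Bessel equation.} The plan is to substitute $g(z)=\e^{Nz}f(z)$ together with $\zeta=L\e^{Mz}$ and $y(\zeta)=g(z)$. A direct calculation eliminates the first-derivative term in (\ref{E:Chiang-Yu}) to give $g''+M^2(L^2\e^{2Mz}-\nu^2)g=\sum_j\sigma_j L^{\mu_j+1}M^2\e^{M(\mu_j+1)z}$, and the change to the $\zeta$ variable reduces this precisely to the non-homogeneous Bessel equation $\zeta^2 y''+\zeta y'+(\zeta^2-\nu^2)y=\sum_j\sigma_j\zeta^{\mu_j+1}$. By linearity and the defining property of $\lommel$ in (\ref{E:non-homogeneous-bessel-de}), the general solution is $y=AJ_\nu(\zeta)+BY_\nu(\zeta)+\sum_j\sigma_j S_{\mu_j,\nu}(\zeta)$; undoing the substitutions yields (\ref{E:f-solution}).

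\textbf{Sufficiency.} If $A=B=0$ and each non-zero $\sigma_j$ satisfies one of $\mu_j\pm\nu=2p_j+1$, then the formal asymptotic series for $S_{\mu_j,\nu}$ terminates and coincides identically with the Laurent polynomial (\ref{E:lommel-sol}). Substituting into (\ref{E:f-solution}) expresses $f(z)$ as a finite exponential sum $\sum c_\alpha\e^{\alpha z}$, whose zero counting function satisfies $n(r,1/f)=O(r)$, so $\lambda(f)\leq 1<+\infty$.

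\textbf{Necessity via asymptotics.} For the converse, the plan is to feed the large-$\zeta$ asymptotic expansions of $J_\nu$, $Y_\nu$ and $\lommel$ from \S\ref{A:Bessel} into (\ref{E:f-solution}) along a direction where $|L\e^{Mz}|\to+\infty$. Writing $\zeta=L\e^{Mz}$, the result takes the schematic form
\[
f(z)=\mathcal{P}(z)+\e^{-Nz}\zeta^{-1/2}\bigl[\Phi_+(z)\e^{i\zeta}+\Phi_-(z)\e^{-i\zeta}\bigr]\bigl(1+o(1)\bigr),
\]
where $\mathcal{P}(z)$ collects the terminating Laurent tails $\zeta^{\mu_j-1-2k}$ and $\Phi_\pm(z)$ are explicit polynomial combinations of $A\pm iB$ together with Bessel-type remainders coming from each nonterminating $S_{\mu_j,\nu}$. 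The distinct-$\Re(\mu_j)$ hypothesis is crucial here: distinct real parts prevent the various remainders from canceling one another, so that once $\Phi_\pm\equiv 0$ is forced below, each constituent must vanish separately, i.e.\ $A=B=0$ together with the termination of every Lommel series for which $\sigma_j\neq 0$.

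\textbf{Zero counting with Rouch\'e and Wright.} The main obstacle is converting the nonvanishing of some $\Phi_\pm$ into the contradiction $\lambda(f)=+\infty$. I would apply E.~M.~Wright's theorem on the asymptotic distribution of zeros of quasi-exponential sums of the shape $\Phi_+(z)\e^{i\zeta}+\Phi_-(z)\e^{-i\zeta}+(\text{lower order})$, combined with Rouch\'e's theorem to localise each zero. This places the large zeros of $f$ asymptotically at $\zeta_k\sim k\pi+O(1)$; pulling back through $\zeta=L\e^{Mz}$ produces a doubly indexed family $z_{k,\ell}\sim M^{-1}\log(k\pi/L)+2\pi i\ell/M$. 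Counting those with $|z_{k,\ell}|\leq r$ yields $n(r,1/f)\gtrsim c\,\e^{Mr}$, hence $\lambda(f)=+\infty$, contradicting the hypothesis. Therefore $\Phi_\pm\equiv0$, and the theorem follows.
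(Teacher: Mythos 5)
Your reduction to the non-homogeneous Bessel equation and your sufficiency argument match the paper's. The necessity argument, however, has a genuine gap at its core. In the principal branch $|\arg\zeta|<\pi$, the asymptotic expansion (\ref{E:lommel-asymptotic-expansion}) of $S_{\mu_j,\nu}(\zeta)$ is \emph{purely algebraic}: it contributes only terms $\widehat\sigma_j\zeta^{\mu_j-\frac12}[1+O(\zeta^{-1})]$ and no exponentials $\e^{\pm i\zeta}$, whether or not the series terminates. Consequently the oscillatory part $\Phi_\pm$ in your schematic decomposition comes \emph{only} from $AJ_\nu+BY_\nu$, and forcing $\Phi_\pm\equiv0$ yields only $A=B=0$. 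It does not force termination of any Lommel series: with $A=B=0$ and $\mu_j\pm\nu$ not odd integers, the combination $\sum_j\sigma_jS_{\mu_j,\nu}(\zeta)$ is dominated by $\widehat\sigma_n\zeta^{\mu_n-\frac12}$ in the principal branch and has only finitely many zeros there, so no contradiction arises from your argument. The appeal to distinct $\Re(\mu_j)$ cannot repair this; that hypothesis only guarantees the leading algebraic terms do not cancel.

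The missing idea is the role of the \emph{non-principal branches} via the analytic continuation formulae (Lemma \ref{L:lommel-continuation-general-1}). Because $z\mapsto L\e^{Mz}$ wraps around the origin infinitely often while $S_{\mu_j,\nu}(L\e^{Mz})$ is entire and single-valued, the condition $\lambda(f)<+\infty$ must hold with $S_{\mu_j,\nu}(\zeta)$ read on \emph{every} branch. On the $m$-th branch, $S_{\mu_j,\nu}(\zeta\e^{-m\pi i})$ acquires the Hankel terms $K_+P_m H_\nu^{(1)}+K_+\e^{-\nu\pi i}P_{m-1}H_\nu^{(2)}$, which reintroduce the exponentials $\e^{\pm i\zeta}$; the paper's Theorem \ref{T:exponent-of-convergence} (your Wright--Rouch\'e step, which is essentially the right tool for that part) then forces these coefficients to vanish for all $m$, and Lemma \ref{L:lommel-continuation-general-1}(b) forces $K_+=0$, i.e.\ precisely the quantization condition (\ref{E:conditions}). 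A further separate argument (Lemma \ref{L:lommel-continuation-general-2} and the polynomials $D_n^\pm$) is needed to exclude $\mu_j\pm\nu$ being an odd \emph{negative} integer, and the degenerate case $\mu_n=\tfrac12$ requires a different auxiliary function and contour. None of this appears in your proposal, so as written it proves only the ``$A=B=0$'' half of necessity.
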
\smallskip

As an immediate consequence of the Theorems \ref{T:Chiang-Yu-Special} or \ref{T:Chiang-Yu}, we get the following result which gives us information about the number of zeros of the Lommel function $S_{\mu,\,\nu}(\zeta)$ in the sense of Nevanlinna's value distribution theory:
\smallskip

\begin{corollary}\label{T:Chiang-Yu-Lommel-Zeros} Suppose that $\sigma_j$ are complex constants such that at least one of $\sigma_j$ is non-zero, where $j=1,\,2,\ldots,\,n$ . Suppose further that $\Re(\mu_j)$ are distinct and $S_{\mu_j,\,\nu}(\zeta)$ are Lommel functions of arbitrary branches given in Lemma {\rm \ref{L:lommel-continuation-general-1}}. Then each branch of the function

\begin{equation}\label{E:F-definition}
F(\zeta)=\sum_{j=1}^n \sigma_j S_{\mu_j,\,\nu}(\zeta)
\end{equation}\smallskip

\noindent has finitely many zeros if and only if either $\mu_j+\nu=2p_j+1$ or $\mu_j-\nu=2p_j+1$ for non-negative integers $p_j$. In particular, the special case $n=1$ implies that each branch of $S_{\mu,\, \nu}(\zeta)$ has finitely many zeros must satisfy either $\mu+\nu=2p+1$ or $\mu-\nu=2p+1$ for a non-negative integer $p$.
\end{corollary}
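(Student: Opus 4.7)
The plan is to reduce the corollary to Theorem~\ref{T:Chiang-Yu} via the exponential substitution $\zeta = e^z$. Choosing $L=M=1$, $N=0$ and $A=B=0$ in that theorem produces the entire function
\[
  f(z) := \sum_{j=1}^n \sigma_j\, S_{\mu_j,\,\nu}(e^z),
\]
which is a solution of the equation (\ref{E:Chiang-Yu}). Theorem~\ref{T:Chiang-Yu} then gives $\lambda(f) < +\infty$ if and only if condition (\ref{E:conditions}) holds for every $j$ with $\sigma_j\neq 0$. It therefore suffices to match the two statements ``each branch of $F(\zeta)$ has finitely many zeros'' and ``$\lambda(f) < +\infty$''.

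For the ``if'' direction I would substitute the explicit polynomial-type representation (\ref{E:lommel-sol}) into $F$ and regroup the terms according to whether $\mu_j=2p_j+1+\nu$ or $\mu_j=2p_j+1-\nu$. Using the identity $\zeta^{\mu_j-1}\sum_{k=0}^{p_j}(-1)^k c_{k,j}\zeta^{-2k}=\zeta^{\pm\nu}\tilde P_j(\zeta^2)$ with $\tilde P_j$ a polynomial of degree $p_j$, one obtains on any branch the reduction
\[
  F(\zeta) = \zeta^\nu Q_+(\zeta^2) + \zeta^{-\nu} Q_-(\zeta^2),
\]
where $Q_+,Q_-$ are polynomials which cannot both degenerate, since the $\Re(\mu_j)$ are assumed distinct. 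The equation $F(\zeta)=0$ reduces to $\zeta^{2\nu}Q_+(\zeta^2)=-Q_-(\zeta^2)$, and a direct comparison of growth rates as $|\zeta|\to\infty$ and $|\zeta|\to 0$ shows $|F|$ is bounded away from zero outside a compact annulus; hence only finitely many zeros remain on any branch.

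For the ``only if'' direction I would argue contrapositively: suppose (\ref{E:conditions}) fails for some index $j_0$ with $\sigma_{j_0}\neq 0$, and produce a branch of $F$ with infinitely many zeros. The central input is the classical Bessel-type asymptotic expansion of $S_{\mu,\,\nu}(\zeta)$ for large $|\zeta|$, which under the failure of (\ref{E:conditions}) contains a non-vanishing oscillatory term whose Gamma-factor amplitude $A_{\mu_{j_0},\nu}$ is non-zero. Summing these expansions over $j$ and exploiting the distinctness of the $\Re(\mu_j)$ (plus, if necessary, the freedom to vary the branch of the multivalued $S_{\mu_j,\,\nu}$) rules out simultaneous cancellation of the Bessel amplitudes. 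One then invokes E.~M.~Wright's classical asymptotic formula (advertised in the introduction) for the large zeros of the resulting exponential-oscillatory functional equation, combined with Rouch\'e's theorem, to locate infinitely many zeros of $F$ on the chosen branch.

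The chief technical obstacle is the non-cancellation step in this contrapositive: knowing that \emph{one} summand $S_{\mu_{j_0},\,\nu}$ carries a non-vanishing Bessel amplitude does not immediately preclude cancellation of the oscillatory part in the sum, and neither does it force infiniteness on every branch simultaneously. The hypothesis that the $\Re(\mu_j)$ are distinct is precisely what is needed: it ensures that the various Lommel summands carry polynomial prefactors $\zeta^{\mu_j-1}$ of different moduli, preventing the oscillatory amplitudes from identically vanishing along all branches at once. Once this is in hand, combining the contrapositive with the ``if'' direction yields the equivalence, and the ``in particular'' case $n=1$ is immediate since no cancellation can possibly occur there.
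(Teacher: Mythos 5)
Your overall architecture coincides with the paper's: set $f(z)=F(\e^z)$, observe that $f$ is an entire solution of (\ref{E:Chiang-Yu}) with $L=M=1$, $N=0$, $A=B=0$, and appeal to Theorem \ref{T:Chiang-Yu}; and your sufficiency argument (the conditions force each $S_{\mu_j,\,\nu}(\zeta)$ to be $\zeta^{\mu_j-1}$ times a polynomial in $1/\zeta$, so that $F$ is a finite combination of powers of $\zeta$ whose dominant terms at $0$ and at $\infty$ are isolated because the $\Re(\mu_j)$ are distinct) is exactly the paper's use of Remark \ref{R:lommel-terminate}, spelled out in slightly more detail.

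The gap is in your necessity direction. You correctly announce that it suffices to relate ``finitely many zeros in each branch'' to the hypothesis of Theorem \ref{T:Chiang-Yu} — which is all the paper does — but you never complete that reduction; instead you re-derive the hard half of Theorem \ref{T:Chiang-Yu}, and the re-derivation rests on a false premise. The large-$\zeta$ expansion of $S_{\mu,\,\nu}(\zeta)$ on a fixed branch, (\ref{E:lommel-asymptotic-expansion}), is \emph{purely algebraic}: it contains no oscillatory term whether or not (\ref{E:conditions}) holds, and consequently $F$ has only finitely many zeros on the principal branch in \emph{every} case. The oscillatory (Hankel-function) contributions that generate infinitely many zeros arise only when one passes to \emph{other} branches, via the analytic continuation formulae of Lemma \ref{L:lommel-continuation-general-1}, with amplitudes $K_+P_m$ and $K_+\e^{-\nu\pi i}P_{m-1}$; the quantization condition is precisely $K_+=0$ (after the odd-negative-integer cases are excluded separately using Lemma \ref{L:lommel-continuation-general-2}). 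The non-cancellation you flag as the chief obstacle is settled not by the distinctness of the $\Re(\mu_j)$ but by Lemma \ref{L:lommel-continuation-general-1}(b) (the coefficients $P_m$ and $P_{m-1}$ never vanish simultaneously) combined with the freedom to vary the integer $m$; the distinctness of the $\Re(\mu_j)$ serves elsewhere, namely for the linear independence guaranteeing $F\not\equiv 0$ and for isolating the single dominant algebraic term $\widehat{\sigma}_n\zeta^{\mu_n-\frac{1}{2}}$ in the Rouch\'e comparison of Theorem \ref{T:exponent-of-convergence}. The direct repair is the paper's route: the zeros of the $m$-th branch of $F$ are exactly the zeros of $f(z)=F(\e^z)$ in the strip $(m-1)\pi<\Im z<(m+1)\pi$, and the proof of Theorem \ref{T:Chiang-Yu} shows that whenever (\ref{E:conditions}) fails for some $j$ with $\sigma_j\neq 0$, infinitely many zeros already occur in a single such branch.
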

\smallskip

The Lommel function $S_{\mu,\, \nu}(\zeta)$, as in the cases of many classical special functions, has, in general, infinitely many branches, that is, its covering manifold has infinitely many sheets. The values of the function in different branches are given by so-called \textit{analytic continuation formulae}. Such analytic continuation formulae in its full generality, first derived by the authors in \cite{Chiang:Yu:2008}, are given in Lemma \ref{L:lommel-continuation-general-1}.\smallskip

\begin{corollary}\label{R:Equivalent-conditions}  Suppose that $f(z)$ is a solution of {\rm (\ref{E:Chiang-Yu-equation-speical-case})}. Then we have $\lambda(f)<+\infty$ if and only if the solution $f(z)$ is subnormal.
\end{corollary}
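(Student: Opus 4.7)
The plan is to reduce the corollary to a direct comparison between Theorem \ref{T:Chiang-Yu-Special} and the authors' earlier subnormality characterization \cite[Theorem 1.2]{Chiang:Yu:2008}. Both results pin down the exceptional solutions of (\ref{E:Chiang-Yu-equation-speical-case}) by the same pair of algebraic conditions, namely $A=B=0$ in (\ref{E:Chiang-Yu-equation-speical-case-solutions}) together with $\mu+\nu=2p+1$ or $\mu-\nu=2p+1$ for some non-negative integer $p$, producing the same closed form (\ref{E:lommel-sol-special})--(\ref{E:ck-special}). Once this observation is in place, the corollary will follow from two one-line implications.

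For the forward direction I would assume $\lambda(f)<+\infty$, invoke Theorem \ref{T:Chiang-Yu-Special} to land in the explicit normal form, and then apply the sufficiency half of \cite[Theorem 1.2]{Chiang:Yu:2008} to conclude subnormality. If I preferred a self-contained check, I would instead substitute $\zeta=\e^z$ into (\ref{E:lommel-sol-special}) to rewrite
\[
f(z)=\sigma\,\lommel(\e^z)=\sigma\sum_{k=0}^{p}(-1)^k c_k\,\e^{(\mu-1-2k)z}
\]
as an exponential polynomial of order at most $1$, for which both quantities in (\ref{D:subnormal-definition}) vanish. Conversely, I would assume $f$ is subnormal, use the necessity half of \cite[Theorem 1.2]{Chiang:Yu:2008} to extract the same algebraic data and the same closed form for $\lommel$, and then quote Theorem \ref{T:Chiang-Yu-Special} in the reverse direction to conclude $\lambda(f)<+\infty$.

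I do not anticipate a substantive obstacle, since all the analytic content --- the asymptotics of Bessel and Lommel functions together with the Wright-type zero-counting argument advertised in the abstract --- is already packaged inside Theorem \ref{T:Chiang-Yu-Special} and the earlier paper. The only point requiring attention is a small notational cross-check: (\ref{E:Chiang-Yu-equation-speical-case}) is the $L=M=1$, $N=0$, $n=1$ specialization of (\ref{E:Chiang-Yu}), and one must verify that the coefficient formulas used for the $c_k$ in \cite{Chiang:Yu:2008} agree on the nose with (\ref{E:ck-special}), so that the two parity conditions line up without any shift in the index $p$. This is routine bookkeeping rather than mathematics.
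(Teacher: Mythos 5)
Your proposal is correct and coincides with the paper's (implicit) argument: the paper derives Corollary \ref{R:Equivalent-conditions} precisely by observing that Theorem \ref{T:Chiang-Yu-Special} and \cite[Theorem 1.2]{Chiang:Yu:2008} characterize the exceptional solutions of (\ref{E:Chiang-Yu-equation-speical-case}) by the identical conditions $A=B=0$ and $\mu\pm\nu=2p+1$ with the same closed form (\ref{E:lommel-sol-special})--(\ref{E:ck-special}). Your optional self-contained checks (the exponential polynomial has order $1$, hence is subnormal and has $\lambda(f)\le 1$) are also sound.
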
\smallskip

\begin{remark}\label{R:Bessel-lommel-independence-branches} \rm We note that for all values of $\mu_j$ and $\nu$, $J_\nu(L{\rm e}^{Mz}),\,Y_\nu(L{\rm e}^{Mz})$ and $S_{\mu_j,\,\nu}(L{\rm e}^{Mz})$ are entire functions in the complex $z$-plane. Hence they are single-valued functions and so are \textit{independent of the branches} of $S_{\mu_j,\,\nu}(\zeta)$.
\end{remark}\smallskip

The main idea of our argument in the proofs is based on the asymptotic expansions of special functions (Bessel and Lommel functions), the analytic continuation formulae for $\lommel(\zeta)$ (Those formulae were first discovered by the authors which play a very important role in \cite{Chiang:Yu:2008}  and also in this paper), the asymptotic locations of the zero of the {transcendental} equation $z \e^z=a$ given by Wright \cite{Wright:1959:1}, \cite{Wright:1959:2} and application of Rouch\'{e}'s theorem on suitably chosen contours in the complex plane.
\medskip

This paper is organized as follows. We introduce the Lommel transformation in \S 2, which serves as a crucial step in our proof to transform the equation (\ref{E:Chiang-Yu}) into the equation

\begin{equation}\label{E:generalized-Lommel}
\zeta^2y''(\zeta)+\zeta y'(\zeta)+(\zeta^2-\nu^2)y(\zeta)=\sum_{j=1}^n\sigma_j
\zeta^{\mu_j+1}.
\end{equation}\smallskip

\noindent Since we need to consider the \textit{different branches} of {the function (\ref{E:general-soln-first-second})} in the proof of the Theorem \ref{T:Chiang-Yu}, so the analytic continuation formulae for $\lommel(\zeta)$ come into play at this stage. We quote these formulae, which were derived in \cite{Chiang:Yu:2008}, in \S 3 for easy reference. Besides, we need information about the zeros of the function $g(\zeta)=\widehat{C}\e^{i\zeta}+\widehat{\sigma}\zeta^{\mu-\frac{1}{2}}$, where $\widehat{C} \neq 0$ and $\mu \neq \frac{1}{2}$. It turns out that Wright has already investigated the precise locations of zeros of the equation $z\e^z=a$ in \cite{Wright:1959:1,Wright:1959:2}, where $a \neq 0$. In fact, this problem is of considerable scientific interest, see for examples  (\cite{Bellman:Cooke:1963, Wright:1955}). Since we need to modify Wright's method in the preliminary construction of one of the contours used in the proof of the Theorem \ref{T:Chiang-Yu}, so we shall sketch Wright's method in \S 4. A detailed study of the zeros of the function $g(\zeta)=\widehat{C}\e^{i\zeta}+\widehat{\sigma}\zeta^{\mu-\frac{1}{2}}$ will be given in \S 5, followed by the proof of the Theorem \ref{T:Chiang-Yu} in \S 6. A proof of the Corollary \ref{T:Chiang-Yu-Lommel-Zeros} is presented in \S 7 and a discussion about the non-homogeneous {function-theoretic} quantization-type result will be given in \S 8. Appendix A contains all the necessary knowledge about the Bessel functions and the Lommel functions that are used in this paper.\smallskip

\section{\bf The Lommel transformations}\label{S:lomme-transformations} Lommel investigated transformations that involve Bessel equations \cite{Lommel:1868} in 1868.\footnote{We mentioned that the same transformations were also considered independently by Pearson \cite[p. 98]{Watson:1944} in 1880.} Our standard references are \cite[4.31]{Watson:1944}, \cite[p. 13]{Erdelyi:Magnu:Oberhettinger:Tricom:1953} and \cite[\S 2]{Chiang:Yu:2008}.  Lommel considered the transformation $\zeta=\alpha x^\beta$ and $y(\zeta)=x^{\gamma}u(x)$, where $x$ and $u(x)$ are the new independent and dependent variables respectively, $\alpha, \beta \in \mathbf{C}\setminus\{0\}$ and $\gamma \in \mathbf{C}$. We apply this transformation to equation (\ref{E:generalized-Lommel}) to obtain a second order differential equation in $u(x)$ whose general solution is $x^{-\gamma} y(\alpha x^\beta)$. Following the idea in \cite{Chiang:Yu:2008}, we apply a further change of variable by
$x={\rm e}^{z}$ and $f(z)=u(x)$ to the said differential equation in $u(x)$ and then replacing $\alpha,\beta$ and $\gamma$ by $L,\, M$ and $N$ respectively. This process yields (\ref{E:Chiang-Yu}). As we have noted in \S 1 that the general solution of (\ref{E:non-homogeneous-bessel-de}) is given by a combination of the Bessel functions of first and second kinds and the Lommel function $S_{\mu,\,\nu}(\zeta)$ (see \cite[7.7.5]{Erdelyi:Magnu:Oberhettinger:Tricom:1953}), hence the general solution to (\ref{E:generalized-Lommel}) is

\begin{align}
y(\zeta)&=A J_\nu (\zeta)+B Y_\nu (\zeta)+\sum_{j=1}^n \sigma_j S_{\mu_j,\, \nu}(\zeta) \label{E:general-soln-first-second}\\
&=C H_\nu^{(1)}(\zeta)+DH_\nu^{(2)}(\zeta)+\sum_{j=1}^n \sigma_j S_{\mu_j,\, \nu}(\zeta),\label{E:general-soln-hankel}
\end{align}\smallskip

\noindent where $C=\frac{1}{2}(A-iB)$ and $D=\frac{1}{2}(A+iB)$. It is easily seen that $A=B=0$ if and only if $C=D=0$. Thus the general solution $f(z)={\rm e}^{-N z}y(L {\rm e}^{M z})$ of {(\ref{E:Chiang-Yu}}) assumes the form

    \begin{align}\label{E:f-solution-hankel}
    f(z)={\rm e}^{-Nz} \bigg[CH_\nu^{(1)}(L{\rm e}^{Mz})+DH_\nu^{(2)}(L{\rm e}^{Mz})+\sum_{j=1}^n \sigma_j S_{\mu_j,\, \nu}(L{\rm e}^{Mz})\bigg].
    \end{align}\smallskip

\section{\bf Analytic continuation formulae for the Lommel function}

We first note that the Lommel functions $\lommel(\zeta)$ have a rather complicated
definition with respect to different subscripts $\mu$ and $\nu$ (in four different cases) even in the principal branch. In this section, we shall not repeat the description of its definition, interested readers please refer to \cite[\S 3.1]{Chiang:Yu:2008} and the references therein. Here we only record the analytic continuation formulae of $S_{\mu,\,\nu}(\zeta)$ and the proofs of them can be found in \cite[\S 3.2-3.5]{Chiang:Yu:2008}. The asymptotic expansion and the linear independence property of $\lommel(\zeta)$ will be given in the appendix.\smallskip

Let $\chi_\pm :=\frac{1}{2}(\mu \pm \nu+1)$. We define the constants

\begin{equation}\label{E:Ks-definition}
\begin{split}
K&:=2^{\mu-1}\Gamma(\chi_+)\Gamma(\chi_-),\, K_+:=Ki[1+\e^{(-\mu+\nu)\pi i}]\cos\big(\frac{\mu+\nu}{2}\pi\big),\\
K'_\pm &:=\pi 2^{\nu-2} i\e^{-m\nu\pi i}\Gamma(\nu)\big[U_{m-1}(\cos \nu\pi)\e^{(m \pm 1)\nu\pi i}-m\big],\\
K''_\pm &:=-\frac{m\pi^2(m \pm 1)}{4},
\end{split}
\end{equation}\smallskip

\noindent where $\Gamma$ is the gamma function and $\D U_m(\cos \nu\pi):=\frac{\sin m\nu\pi}{\sin \nu\pi}$ is the \textit{Chebyschev polynomials of the second kind}.\smallskip

When $\mu\pm \nu\not= 2p+1$ for any integer $p$, then we have

\begin{lemma}\cite[Theorem 3.4]{Chiang:Yu:2008}\label{L:lommel-continuation-general-1} Let $m$ be an integer.

\begin{enumerate}
  \item[(a)] We have

\begin{align}\label{E:lommel-continuation-general}
 S_{\mu,\,\nu}(\zeta \e^{-m\pi i})&=(-1)^m \e^{-m\mu\pi i}
 S_{\mu,\,\nu}(\zeta)+K_+\big[P_m(\cos \nu\pi,\, \e^{-\mu \pi i})H_\nu^{(1)}(\zeta)\\
 &\quad+\e^{-\nu\pi i }P_{m-1}(\cos \nu\pi,\, \e^{-\mu \pi i})H_\nu^{(2)}(\zeta)\big],\notag
\end{align}\smallskip

\noindent where $P_m(\cos \nu\pi,\, \e^{-\mu \pi i})$ is a rational function of $\cos \nu\pi$ and $\e^{-\mu\pi i}$ given by

\begin{align}\label{E:coefficient-P}
P_m(\cos \nu\pi&,\,\e^{-\mu\pi i}) \notag\\
&=\frac{U_{m-1}(\cos \nu\pi) +\e^{-\mu\pi i}U_m(\cos \nu\pi)+(-1)^{m+1}\e^{-(m+1)\mu\pi i}}{[1+\e^{-(\mu+\nu)\pi i}][1+\e^{-(\mu-\nu)\pi i}]}.
\end{align}\smallskip

  \item[(b)]  Furthermore, the coefficients $P_m(\cos \nu\pi,\, \e^{-\mu \pi i})$ and $P_{m-1}(\cos \nu\pi,\, \e^{-\mu \pi i})$ are not identically zero \textit{simultaneously} for all $\mu,\nu$ and all non-zero integers $m$.
\end{enumerate}
\end{lemma}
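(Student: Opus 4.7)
The plan is to exploit the near-invariance of the non-homogeneous Bessel equation under $\zeta \mapsto \zeta \e^{-m\pi i}$ and then identify the Bessel coefficients by iteration on $m$. The operator $L[y] := \zeta^2 y'' + \zeta y' + (\zeta^2 - \nu^2) y$ is unchanged by this substitution (because $\zeta^2 = (\zeta \e^{-m\pi i})^2$ for $m \in \mathbf{Z}$), while the inhomogeneity $\zeta^{\mu+1}$ of (\ref{E:non-homogeneous-bessel-de}) picks up the scalar factor $\e^{-m(\mu+1)\pi i} = (-1)^m \e^{-m\mu\pi i}$. Consequently the difference
\[
\lommel(\zeta \e^{-m\pi i}) - (-1)^m \e^{-m\mu\pi i} \lommel(\zeta)
\]
is annihilated by $L$ and therefore must be a linear combination $\alpha_m \hfir(\zeta) + \beta_m \hsec(\zeta)$ with coefficients depending only on $m, \mu, \nu$. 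This immediately establishes the \emph{shape} of (\ref{E:lommel-continuation-general}); the substantive task is to identify $\alpha_m = K_+ P_m$ and $\beta_m = K_+ \e^{-\nu\pi i} P_{m-1}$ with the explicit expressions (\ref{E:coefficient-P}).

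For the base case $m = 1$ I would use the classical decomposition $\lommel(\zeta) = s_{\mu,\nu}(\zeta) + (\text{explicit linear combination of } J_\nu,\,Y_\nu)$, in which the ``small'' Lommel function $s_{\mu,\nu}$ is given by a convergent power series of the form $\zeta^{\mu+1} \sum_k c_k \zeta^{2k}$. Termwise this yields the clean relation $s_{\mu,\nu}(\zeta \e^{-\pi i}) = -\e^{-\mu\pi i} s_{\mu,\nu}(\zeta)$, and combining it with the standard Hankel continuation formulas for $\hfir(\zeta \e^{-\pi i})$ and $\hsec(\zeta \e^{-\pi i})$ from \cite{Watson:1944} pins down $P_0$ and $P_1$ in the form prescribed by (\ref{E:coefficient-P}). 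For general $m$ I would proceed by induction on $|m|$: apply the $m=1$ identity with $\zeta$ replaced by $\zeta \e^{-(m-1)\pi i}$ and then re-expand the resulting $\hfir(\zeta \e^{-(m-1)\pi i})$ and $\hsec(\zeta \e^{-(m-1)\pi i})$ back to functions of $\zeta$ via the same continuation rules. The coefficients of $\hfir(\zeta)$ and $\hsec(\zeta)$ so produced satisfy a three-term linear recurrence whose homogeneous part is the Chebyshev recurrence $x_{m+1} - 2\cos(\nu\pi)\,x_m + x_{m-1} = 0$, and one then verifies by direct substitution that the proposed closed form (\ref{E:coefficient-P}) solves this recurrence with the correct initial values at $m = 0, 1$. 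Negative integers $m$ are treated symmetrically by iterating the opposite rotation $\zeta \mapsto \zeta \e^{\pi i}$.

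The main obstacle is the algebraic bookkeeping in this induction: one must track the accumulating factor $\e^{-\mu\pi i}$ in parallel with the accumulating Chebyshev contribution $U_m(\cos\nu\pi)$ and confirm that the three terms $U_{m-1}(\cos\nu\pi)$, $\e^{-\mu\pi i} U_m(\cos\nu\pi)$, and $(-1)^{m+1} \e^{-(m+1)\mu\pi i}$ assemble with exactly the stated signs and exponents in (\ref{E:coefficient-P}). Once part (a) is in place, part (b) follows quickly. Setting $w := \e^{-\mu\pi i}$, the numerator of $P_m$ is the polynomial
\[
U_{m-1}(\cos\nu\pi) + U_m(\cos\nu\pi)\, w + (-1)^{m+1} w^{m+1},
\]
whose top-degree monomial in $w$ is $(-1)^{m+1} w^{m+1}$, while the corresponding top-degree monomial for $P_{m-1}$ is $(-1)^m w^m$. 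For every non-zero integer $m$ these leading monomials sit at distinct positive degrees in $w$, so neither polynomial vanishes identically as a function of $(\mu, \nu)$ and the two cannot be simultaneously the zero polynomial. A parallel argument in the variable $w^{-1}$ takes care of negative $m$.
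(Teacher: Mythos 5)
The paper itself contains no proof of this lemma: it is imported verbatim from \cite[Theorem 3.4]{Chiang:Yu:2008}, with the proofs relegated to \S 3.2--3.5 of that reference, so there is no in-text argument to compare yours against. Taken on its own terms, your plan for part (a) is sound and is the natural route: the operator $L[y]=\zeta^2y''+\zeta y'+(\zeta^2-\nu^2)y$ is indeed invariant under $\zeta\mapsto\zeta\e^{-m\pi i}$ while the right-hand side of (\ref{E:non-homogeneous-bessel-de}) acquires the factor $(-1)^m\e^{-m\mu\pi i}$, so $\lommel(\zeta\e^{-m\pi i})-(-1)^m\e^{-m\mu\pi i}\lommel(\zeta)$ solves the homogeneous Bessel equation and is a combination $\alpha_m\hfir(\zeta)+\beta_m\hsec(\zeta)$; the base case via $s_{\mu,\nu}$ and the induction through the Hankel continuation formulae (which is exactly where the Chebyshev data $U_m(\cos\nu\pi)$ enter, cf.\ (\ref{E:bessel-first-continuation})--(\ref{E:bessel-second-continuation})) is the right machinery, and what remains is, as you say, bookkeeping.

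Part (b) has a genuine gap, and it matters because of how the lemma is used in \S 6. A small point first: your claim that \emph{neither} polynomial vanishes identically fails at $m=\pm1$, since consistency of (\ref{E:lommel-continuation-general}) at $m=0$ forces $P_0\equiv P_{-1}\equiv 0$ (this also shows the normalization must be the standard Chebyshev one, $U_0=1$, $U_{-1}=0$, not the one displayed after (\ref{E:Ks-definition})); your conclusion survives there only because $P_1$ is nonzero. The more serious issue is that non-vanishing of $P_m$ \emph{as a polynomial in} $w=\e^{-\mu\pi i}$ is not the statement the paper needs. In \S 6 the pair $(\mu_j,\nu)$ is fixed and one concludes $K_+=0$ from $K_+P_m=0=K_+P_{m-1}$ holding for every $m$; this requires that $P_m$ and $P_{m-1}$ not vanish simultaneously \emph{at the given point} $(\cos\nu\pi,\e^{-\mu\pi i})$ for at least one non-zero $m$. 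A leading-monomial argument cannot deliver a pointwise statement, and indeed simultaneous pointwise vanishing does occur for individual $m$: for $\mu=0$, $\nu=\tfrac13$, $m=3$ one checks $P_2=P_3=0$ even though $\mu\pm\nu$ is not an odd integer. The repair is short: the numerator of $P_1$ equals the common denominator $1+2w\cos\nu\pi+w^2=[1+\e^{-(\mu+\nu)\pi i}][1+\e^{-(\mu-\nu)\pi i}]$, which is non-zero precisely under the standing hypothesis, so $P_1\equiv1$; alternatively, the numerators $N_m$ satisfy $N_{m+1}-2\cos(\nu\pi)N_m+N_{m-1}=(-1)^mw^m[1+2w\cos\nu\pi+w^2]$, so three consecutive vanishing values would force the denominator to vanish. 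Either observation yields the version of (b) that \S 6 actually invokes; your argument as written does not.
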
\smallskip

When either $\mu+\nu$ or $\mu-\nu$ is an odd negative integer $-2p-1$, where $p$ is a non-negative integer, then we have another sets of analytic continuation formulae which are given by

\begin{lemma}\cite[Lemmae 3.6, 3.8, 3.10]{Chiang:Yu:2008}\label{L:lommel-continuation-general-2} Let $m$ be an integer. Then we have
\begin{enumerate}
  \item[(a)] If $-\nu \not\in \{0,\, 1,\, 2,\ldots\}$, then we have

  \begin{align*}
  S_{\nu-2p-1,\,\nu}(\zeta\e^{-m\pi i})&=\e^{-m\nu \pi i}S_{\nu-2p-1,\,\nu}(\zeta)\\
  &\quad +\frac{(-1)^p}{2^{2p}p!(1-\nu)_p}[K_+'\hfir(\zeta)+K_-'\hsec(\zeta)].
  \end{align*}

  \item[(b)] If $\nu=0$, then we have

  \begin{align*}
  S_{-2p-1,\,0}(\zeta \e^{-m\pi i})=S_{-2p-1,\,0}(\zeta)+\frac{(-1)^p}{2^{2p}(p!)^2}[K_+''H^{(1)}_0(\zeta)+K_-''H^{(2)}_0(\zeta)].
  \end{align*}

  \item[(c)] We define $\delta_m=1+(-1)^{m-1}$ and for every polynomial $P_n(\zeta)$ of degree $n$, we define $\widehat{P}_n(\zeta)$ to be the polynomial containing the term of $P_n(\zeta)$ with odd powers in $\zeta$ and $\overline{P}_n(\zeta):=P_n(\zeta)-\delta_m\widehat{P}_n(\zeta)$. If $\nu=-n$ is a positive integer $n$, then we have

  \begin{align*}
  S_{-n-2p-1,\,-n}&(\zeta {\rm e}^{-m\pi i})\\
  &=(-1)^{mn}S_{-n-2p-1,\,-n}(\zeta)+\frac{(-1)^{(m+1)n+p}}{2^{2p+n} n!(p!)^2(1+n)_p}\zeta^{-n}\times \notag\\
  &\qquad\Big\{-\delta_m\big[\widehat{A}_n(\zeta)+\widehat{B}_n(\zeta)S_{-1,\,0}(\zeta)
  +\zeta\widehat{C}_n(\zeta)S_{-1,\,0}'(\zeta)\big]\notag \\
  &\quad+\overline{B}_n(\zeta)\big[K_+''H_0^{(1)}(\zeta)+K_-''H_0^{(2)}(\zeta)\big]\notag\\
  &\qquad-\zeta\overline{C}_n(\zeta)\big[K_+''H_1^{(1)}(\zeta)+K_-''H_1^{(2)}(\zeta)\big]\Big\},\notag
  \end{align*}\smallskip

  \noindent where $A_n(\zeta),\, B_n(\zeta)$ and $C_n(\zeta)$ are polynomials in $\zeta$ of degree at most $n$ such that $A_1(\zeta)=B_1(\zeta) \equiv 0,\,C_1(\zeta) \equiv 1$ and when $n \ge 2$, that they satisfy the following recurrence relations:

  \begin{equation}\label{E:recurrence-A_n-B_n-C_n}
  \begin{split}
  A_n(\zeta)&=-2(n-1)A_{n-1}(\zeta)+\zeta A_{n-1}'(\zeta)+C_{n-1}(\zeta),\\
  B_n(\zeta)&=-2(n-1)B_{n-1}(\zeta)+\zeta B_{n-1}'(\zeta)-\zeta^2 C_{n-1}(\zeta),\\
  C_n(\zeta)&=-2(n-1)C_{n-1}(\zeta)+B_{n-1}(\zeta)+\zeta C_{n-1}'(\zeta).
  \end{split}
  \end{equation}
  \end{enumerate}
\end{lemma}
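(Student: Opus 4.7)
The plan is to obtain all three parts of Lemma \ref{L:lommel-continuation-general-2} as appropriate limits of the generic continuation formula in Lemma \ref{L:lommel-continuation-general-1} at the points where $\mu\pm\nu$ becomes an odd negative integer. At these points the Lommel function $S_{\mu,\nu}$ itself is defined by a limiting procedure (because one of the gamma factors in the constant $K=2^{\mu-1}\Gamma(\chi_+)\Gamma(\chi_-)$ of (\ref{E:Ks-definition}) develops a pole), and likewise the coefficient $K_+P_m(\cos\nu\pi,\e^{-\mu\pi i})$ of $H_\nu^{(j)}$ in (\ref{E:lommel-continuation-general}) diverges, since the denominator $1+\e^{-(\mu\pm\nu)\pi i}$ in (\ref{E:coefficient-P}) vanishes linearly. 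The singular parts on the two sides cancel, and the finite remainders produce the new continuation formula.

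Concretely, for part (a), I would set $\mu=\nu-2p-1+\epsilon$ and let $\epsilon\to0$, with $-\nu\not\in\{0,1,2,\ldots\}$. Then $1+\e^{-(\mu-\nu)\pi i}=1-\e^{-\pi i\epsilon}\sim\pi i\epsilon$, while $\Gamma(\chi_-)=\Gamma(-p+\epsilon/2)$ has residue $2(-1)^p/p!$. Using $\Gamma(\nu-p)=(-1)^p\Gamma(\nu)/(1-\nu)_p$ one computes the residue of $K_+P_m$ at $\epsilon=0$; subtracting it from $S_{\mu,\nu}(\zeta\e^{-m\pi i})-\e^{-m\mu\pi i}S_{\mu,\nu}(\zeta)$ on the left (whose singular part matches by general principles) and passing to the limit yields part (a), with the constant $K'_\pm$ identified explicitly. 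The parallel case $\mu=-\nu-2p-1+\epsilon$ gives the same formula after renaming. Part (b) is then a further specialization $\nu\to 0$ of part (a), in which $U_{m-1}(\cos\nu\pi)\to m$ collapses $K'_\pm$ to $K''_\pm=-m\pi^2(m\pm 1)/4$.

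Part (c) is the main obstacle. One must take a \emph{double} limit $\mu\to-n-2p-1$, $\nu\to-n$ simultaneously, because now $\mu\pm\nu$ are both exceptional, $\Gamma(\nu)$ also has a pole, and $H_{-n}^{(j)}$ acquires logarithmic terms. My strategy is to first establish the recurrences (\ref{E:recurrence-A_n-B_n-C_n}) abstractly by induction on $n$ from the Bessel recurrence $\zeta H_{\nu-1}^{(j)}(\zeta)+\zeta H_{\nu+1}^{(j)}(\zeta)=2\nu H_\nu^{(j)}(\zeta)$ differentiated in $\nu$ and evaluated at integer $\nu$, starting from $A_1\equiv B_1\equiv 0$, $C_1\equiv 1$; and then to carry out the double limit in Lemma \ref{L:lommel-continuation-general-1} with $\mu=-n-2p-1+\epsilon_1$, $\nu=-n+\epsilon_2$, expanding to first order in $\epsilon_1,\epsilon_2$ and collecting coefficients of $H_0^{(j)},H_1^{(j)},S_{-1,0}(\zeta)$ and $S'_{-1,0}(\zeta)$. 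The even/odd split $\overline{P}_n,\widehat{P}_n$ enforced by the factor $\delta_m=1+(-1)^{m-1}$ reflects the fact that whether $m$ is odd or even determines which residual terms survive; tracking this combinatorial bookkeeping will be the technical heart of the argument.
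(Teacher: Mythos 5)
First, note that this paper does not actually prove Lemma \ref{L:lommel-continuation-general-2}: it is imported verbatim from \cite[Lemmae 3.6, 3.8, 3.10]{Chiang:Yu:2008}, with the proofs explicitly deferred to \cite[\S 3.2--3.5]{Chiang:Yu:2008}. The cited derivation is a direct one: the exceptional Lommel functions are written out explicitly (in the spirit of Watson \S 10.73), with $S_{-n-2p-1,\,-n}$ in particular expressed through the Lommel recurrences in terms of $S_{-1,\,0}$, $S_{-1,\,0}'$ and Bessel functions --- this is precisely where the polynomials $A_n,B_n,C_n$ and the recurrences (\ref{E:recurrence-A_n-B_n-C_n}) come from --- and the continuation formulae then follow from (\ref{E:bessel-first-continuation}), (\ref{E:bessel-second-continuation}) and the $\zeta\mapsto\zeta\e^{-m\pi i}$ behaviour of $\zeta^{-n}$. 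Your confluence-from-the-generic-case strategy is therefore a genuinely different route, and for parts (a) and (b) it is viable in principle. But your description of the analytic structure is wrong in a way that matters: nothing diverges and nothing cancels between the two sides. Since $S_{\mu,\nu}(\zeta)$ has a \emph{removable} singularity at $\mu-\nu=-2p-1$, every term of (\ref{E:lommel-continuation-general}) converges individually; in particular $K_+P_m$ is a $0/0$ form, because the pole of $\Gamma(\chi_-)$ in $K$ is already killed by the factor $1+\e^{(-\mu+\nu)\pi i}$ inside $K_+$, while the vanishing denominator factor $1+\e^{-(\mu-\nu)\pi i}$ of (\ref{E:coefficient-P}) is matched by the vanishing of the numerator $U_{m-1}+\e^{-\mu\pi i}U_m+(-1)^{m+1}\e^{-(m+1)\mu\pi i}$ at $\mu=\nu-2p-1$. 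The whole content of parts (a) and (b) is the l'H\^{o}pital evaluation of these $0/0$ (and, for (b), $0\cdot\infty$) forms and the verification that the limits equal $\frac{(-1)^p}{2^{2p}p!(1-\nu)_p}K_\pm'$ and $K_\pm''$; you assert the answer but do not perform the computation.

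The genuine gap is part (c). A first-order double expansion in $(\epsilon_1,\epsilon_2)$ around $(\mu,\nu)=(-n-2p-1,-n)$ cannot, as described, produce the structure of the stated formula: it does not explain why $S_{-1,\,0}(\zeta)$ and $S_{-1,\,0}'(\zeta)$ appear (these are not limits of anything in (\ref{E:lommel-continuation-general})), why only $H_0^{(j)}$ and $H_1^{(j)}$ occur rather than $H_{-n}^{(j)}$, or why the answer organizes itself into the polynomials $A_n,B_n,C_n$ with the parity split governed by $\delta_m$. Your proposed derivation of (\ref{E:recurrence-A_n-B_n-C_n}) ``from the Bessel recurrence differentiated in $\nu$'' does not connect to these recurrences either: they involve $\zeta$-derivatives $A_{n-1}'(\zeta)$, $B_{n-1}'(\zeta)$, $C_{n-1}'(\zeta)$ and arise from iterating the Lommel order- and derivative-recurrences to step $S_{-n-2p-1,\,-n}$ down to $S_{-1,\,0}$, not from $\nu$-differentiation of a Hankel-function identity. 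Without an explicit representation of $S_{-n-2p-1,\,-n}$ in terms of $S_{-1,\,0}$, $S_{-1,\,0}'$, $H_0^{(j)}$, $H_1^{(j)}$ and the polynomials, the ``combinatorial bookkeeping'' you defer is not bookkeeping but the entire proof; as it stands, part (c) is a statement of intent rather than an argument.
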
\smallskip

\section{\bf Applications of Wright's result}\label{S:location-zeros}

Suppose that $a$ is a non-zero complex number such that $a=A\e^{i\alpha}$, where $-\pi<\alpha \le \pi$ and $A=|a| \neq 0$. In 1959, Wright \cite{Wright:1959:1, Wright:1959:2} obtained precise asymptotic locations of the zeros of the equation
\
\begin{equation}\label{E:wright-equation}
z\e^z=a
\end{equation}\smallskip

\noindent in terms of rapidly convergent series by constructing the Riemann surface of the inverse function of $z+\log z$. This result of Wright is of considerable scientific interest, particularly in the theory and various applications of difference-differential equations, see \cite{Bellman:Cooke:1963, Wright:1955}. For further applications of this equation, please refer to \cite{Caillol:2003, Corless:1996, Valluri:2001}.\smallskip

In this section, we first describe Wright's result in the Lemma \ref{L:wright} below. Next we apply Wrights' result to obtain finer estimates of the real and imaginary parts of the solutions of (\ref{E:wright-equation}) in the Lemmae \ref{L:real-imaginary-bounds} and \ref{L:real-imaginary-bounds-subseq} below which we need to construct a certain contour needed in the proof of the main result in Proposition \ref{L:g-number-zeros}.\smallskip

Suppose that $n$ is an integer. We let $z(n)=x(n)+iy(n)$ be solutions of equation (\ref{E:wright-equation}), where $x(n)$ and $y(n)$ are real, and are given in the following result of Wright:
\smallskip

\begin{lemma}[\cite{Wright:1959:1, Wright:1959:2}] \label{L:wright} Let $a=A\e^{i\alpha}$, where $-\pi < \alpha \le \pi$ and $A=|a| \neq 0$. Let $\sgn(n)$ be the sign of the non-zero integer $n$. We define

\begin{equation}\label{E:wright-H_n-beta_n}
H_n:=2|n|\pi+\sgn(n)\alpha-\frac{\pi}{2},\quad \beta_n:=\log\frac{A}{H_n},
\end{equation}\smallskip

\noindent taking $\log \frac{A}{H_n}$ real. If $|n|$ is sufficiently large such that

\begin{equation}\label{E:condition-n}
\D 2H_n|\beta_n|<(H_n-1)^2,\quad(\log A)^2<\bigg(H_n-\frac{\pi}{2}\bigg)^2+2(1+\log A)\log H_n+1,
\end{equation}\smallskip

\noindent then the solutions of the equation {\rm (\ref{E:wright-equation})} are given by

\begin{equation}\label{E:wright-solutions}
x(n)=(H_n+\eta_n)\tan \eta_n,\quad  y(n)=\sgn(n)(H_n+\eta_n),
\end{equation}\smallskip

\noindent where $\D \eta_n=\sum_{j=0}^{+\infty} (-1)^j Q_{2j+1}(\beta_n)H_n^{-2j-1}$ and $\{Q_m(t)\}$ is the sequence of polynomials defined by

\begin{equation}\label{E:Qm-definition}
Q_1(t):=t,\quad Q_{m+1}(t):=Q_m(t)+m\int_0^t Q_m(s)\, {\rm d}s,
\end{equation}\smallskip

\noindent where $m$ is a positive integer.
\end{lemma}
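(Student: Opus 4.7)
The plan is to follow Wright's original argument \cite{Wright:1959:1,Wright:1959:2}, whose key idea is to analyse the Riemann surface of the inverse of $z\mapsto z+\log z$. Taking the complex logarithm of $z\e^z=a$ sheet by sheet, the equation (\ref{E:wright-equation}) is equivalent to
\[
z+\log z \;=\; \log A + i(\alpha+2\pi n), \qquad n\in\mathbf{Z},
\]
with the branch of $\log z$ chosen to match the integer $n$. For $|n|$ large one expects $\Im z(n)\sim \sgn(n)\cdot 2\pi|n|$ and $\Re z(n)$ to grow only logarithmically, which motivates the ansatz $y(n)=\sgn(n)(H_n+\eta_n)$ with $\eta_n$ real and small.

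The first step is to reduce the two real equations coming from the real and imaginary parts into a single transcendental equation. With the ansatz above and $\arg z=\sgn(n)\pi/2-\arctan(x/y)$ (valid once $\sgn y=\sgn n$), the imaginary part collapses to $y-\sgn(n)\arctan(x/y)=\sgn(n)H_n$, which immediately forces
\[
x(n)=(H_n+\eta_n)\tan\eta_n,
\]
the first relation in (\ref{E:wright-solutions}). Using $x^2+y^2=(H_n+\eta_n)^2\sec^2\eta_n$, the real part of the logarithmic equation then reduces to
\[
(H_n+\eta_n)\tan\eta_n-\log\cos\eta_n \;=\; \beta_n-\log\!\bigl(1+\eta_n/H_n\bigr),
\]
a single equation for the real unknown $\eta_n$.

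The next step is to invert this equation as a series in $H_n^{-1}$. Expanding
\[
(H_n+\eta)\tan\eta-\log\cos\eta \;=\; H_n\eta+\tfrac{3}{2}\eta^2+\tfrac{H_n}{3}\eta^3+\cdots
\]
and $-\log(1+\eta/H_n)=-\eta/H_n+\eta^2/(2H_n^2)-\cdots$, a direct Lagrange--B\"urmann inversion yields $\eta_n=\beta_n/H_n - Q_3(\beta_n)/H_n^3 + \cdots$, with $Q_3(t)=t+\tfrac{3}{2}t^2+\tfrac{1}{3}t^3$ matching the recursion (\ref{E:Qm-definition}). An induction, in which the antiderivative appearing at each stage of the inversion reproduces the integral recursion $Q_{m+1}(t)=Q_m(t)+m\int_0^t Q_m(s)\,ds$, identifies every coefficient as $(-1)^jQ_{2j+1}(\beta_n)$; the absence of even-index terms follows because at each stage the leading $H_n\epsilon$ correction must rebalance against the subleading $H_n^{-2}$ contributions, so the error always drops by a factor $H_n^{-2}$.

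The main obstacle is proving convergence, which is precisely what the hypotheses (\ref{E:condition-n}) are engineered to deliver: the first inequality reads $|\beta_n|<(H_n-1)^2/(2H_n)$ and keeps $\eta_n$ safely inside a disk on which $\tan\eta$ and $\log\cos\eta$ are analytic, while the second ensures $|\eta_n/H_n|<1$ so that the expansion of $\log(1+\eta_n/H_n)$ is legitimate. Under these bounds a majorant estimate (equivalently, a contraction-mapping argument for the fixed-point formulation $\eta\mapsto\arctan\bigl((\beta_n-\log\cos\eta+\log(H_n+\eta)-\log H_n)/(H_n+\eta)\bigr)$, rearranged from the transcendental equation above) shows the formal series converges to a unique real root in the relevant disk, yielding (\ref{E:wright-solutions}). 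The delicate bookkeeping to verify that (\ref{E:condition-n}) suffices for term-by-term convergence is where Wright's explicit construction of the Riemann surface of $z+\log z$ enters in an essential way.
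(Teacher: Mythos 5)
The paper does not prove this lemma at all: it is quoted verbatim from Wright \cite{Wright:1959:1, Wright:1959:2}, so there is no in-paper argument to compare against. Judged on its own terms, your reconstruction of Wright's method is structurally faithful and the formal computations check out: taking logarithms sheet by sheet gives $z+\log z=\log A+i(\alpha+2\pi n)$; the imaginary part with $\arg z=\sgn(n)\tfrac{\pi}{2}-\arctan(x/y)$ yields $y(n)=\sgn(n)(H_n+\eta_n)$ and $x(n)=(H_n+\eta_n)\tan\eta_n$; the real part collapses to the single equation $(H_n+\eta_n)\tan\eta_n-\log\cos\eta_n=\beta_n-\log(1+\eta_n/H_n)$; and your inversion correctly reproduces $\eta_n=\beta_nH_n^{-1}-Q_3(\beta_n)H_n^{-3}+\cdots$ with $Q_3(t)=t+\tfrac32t^2+\tfrac13t^3$, consistent with the recursion (\ref{E:Qm-definition}).

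The genuine gap is the convergence step, which is the entire analytic content of the hypotheses (\ref{E:condition-n}): you assert that "a majorant estimate" or "contraction-mapping argument" closes the proof, and that the "delicate bookkeeping" is where Wright's Riemann-surface construction "enters in an essential way," but none of this is carried out. In particular, the induction identifying the general coefficient as $(-1)^jQ_{2j+1}(\beta_n)$ and the vanishing of even-index terms is only sketched by a parity heuristic, and no bound on $|Q_{2j+1}(\beta_n)|H_n^{-2j-1}$ is derived from the first inequality $2H_n|\beta_n|<(H_n-1)^2$; the role of the second inequality in (\ref{E:condition-n}) (which in Wright's paper guarantees that the root so constructed exhausts all solutions in the relevant strip, not merely that one exists) is not addressed at all. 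As it stands the proposal establishes the shape of the answer but defers the theorem's substance to the very reference being proved.
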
\smallskip

\begin{remark}\label{R:eta-properties} \rm We deduce from the definition (\ref{E:wright-H_n-beta_n}) that $\beta_n < 0$ for all $n$ sufficiently large and $\beta_n \to -\infty$ as $n \to \pm \infty$. We also note that it follows from (\ref{E:Qm-definition}) that $Q_m(t)$ is a polynomial of degree at most $m$. This and the series representation of $\eta_n$ show that $\D \eta_n=O\Big(\frac{\beta_n}{H_n}\Big)$, $\eta_n<0$ and $\eta_n \to 0$ as $n \to \pm \infty$.
\end{remark}

As we have already mentioned in \S 1 that {we} would like to apply Rouch\'{e}'s theorem to suitable contours. To construct one of these contours, it is necessary to derive accurate bounds for $x(n)$ and $y(n)$ from the following Lemma {\ref{L:real-imaginary-bounds}} of Wright. We include the argument leading to the inequalities to familiarize our readers for later applications.

\begin{lemma}\cite[p. 196]{Wright:1959:2}\label{L:real-imaginary-bounds} Suppose that {$H_n$ and $A$ are as defined in {\rm (\ref{E:wright-H_n-beta_n})}. Then the upper and lower bounds for the real and imaginary parts of the solutions to {\rm (\ref{E:wright-equation})} are given, respectively by}

\begin{align}\label{E:real-bounds}
2\log \frac{A}{(2|n|+1)\pi}-1 < x(n) < \log \frac{A}{2(|n|-1)\pi}+1
\end{align}

\noindent and

\begin{equation}\label{E:imaginary-bounds}
\left\{
  \begin{array}{ll}
    \D (2n-1)\pi+\alpha < y(n) < 2n\pi+\alpha, & \hbox{if $n$ is large positively;} \\
    \D 2n\pi+\alpha< y(n) < (2n+1)\pi+\alpha, & \hbox{if $n$ is large negatively.}
  \end{array}
\right.
\end{equation}\smallskip
\end{lemma}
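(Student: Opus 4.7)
The plan is to derive both sets of inequalities directly from Wright's explicit formulas (\ref{E:wright-solutions}), supplemented by the decay properties of $\eta_n$ collected in Remark \ref{R:eta-properties}.

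For the imaginary part I would simply substitute $H_n=2|n|\pi+\sgn(n)\alpha-\pi/2$ into $y(n)=\sgn(n)(H_n+\eta_n)$. Splitting the cases $n>0$ and $n<0$ yields, respectively,
\[
y(n)=2n\pi+\alpha-\tfrac{\pi}{2}+\eta_n\qquad\text{and}\qquad y(n)=2n\pi+\alpha+\tfrac{\pi}{2}-\eta_n,
\]
so each of the four inequalities in (\ref{E:imaginary-bounds}) reduces to the single assertion $|\eta_n|<\pi/2$. By Remark \ref{R:eta-properties} this is guaranteed once (\ref{E:condition-n}) is in force, since $\eta_n\to 0$ as $|n|\to\infty$ (and is in fact eventually negative).

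For the real part the natural route is to combine Wright's formula $x(n)=(H_n+\eta_n)\tan\eta_n$ with the Taylor expansion $\tan\eta_n=\eta_n+O(\eta_n^3)$ and with $\eta_n=\beta_n/H_n+O\!\bigl((\beta_n/H_n)^3\bigr)$, which gives $x(n)=\beta_n+O(\beta_n^2/H_n)$ where $\beta_n=\log(A/H_n)$. Since $H_n\sim 2|n|\pi$, both bounds in (\ref{E:real-bounds}) should follow by elementary comparison of logarithms. An equivalent and perhaps cleaner route is to exploit the modulus identity $|z(n)|\e^{x(n)}=A$ coming from (\ref{E:wright-equation}): combined with $|z(n)|\ge|y(n)|$, the imaginary bound just established immediately yields the upper estimate $x(n)<\log\!\bigl(A/(2(|n|-1)\pi)\bigr)<\log\!\bigl(A/(2(|n|-1)\pi)\bigr)+1$; the lower estimate follows by contradiction, using that $|z(n)|^2=x(n)^2+y(n)^2$ together with $|y(n)|<(2|n|+1)\pi$ prevents $|z(n)|=A\e^{-x(n)}$ from growing too fast, so that $x(n)$ cannot become too negative.

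The main obstacle I anticipate is not the algebra itself but the book-keeping: checking that the smallness condition (\ref{E:condition-n}) is strong enough to validate the Taylor expansion of $\tan\eta_n$ uniformly in $n$ and to absorb the $O(\beta_n^2/H_n)$ error into the slack built into (\ref{E:real-bounds}). In particular, the factor $2$ and the additive $-1$ appearing in the lower estimate for $x(n)$ are not the tightest constants obtainable from the asymptotic analysis (the natural estimate being $x(n)\sim\beta_n$), so one should expect this inequality to hold with considerable room to spare; the stated form is then convenient because it is robust against the choice of branch of $\log$ implicit in $\beta_n$.
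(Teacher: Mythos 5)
Your proposal is correct and your primary route --- substituting $H_n$ into $y(n)=\sgn(n)(H_n+\eta_n)$ so that all four inequalities in (\ref{E:imaginary-bounds}) reduce to $|\eta_n|<\pi/2$, and expanding $x(n)=(H_n+\eta_n)\tan\eta_n$ with $\eta_nH_n\approx\beta_n$ for the real part --- is essentially the paper's own argument, which likewise establishes $2\beta_n-1<x(n)<\beta_n+1$ and then converts $\beta_n=\log(A/H_n)$ into the stated logarithms using $2(|n|-1)\pi<H_n<(2|n|+1)\pi$. Your alternative route via the modulus identity $|z(n)|\e^{x(n)}=A$ combined with the already-proved imaginary bounds is a legitimate variant not used in the paper (and even gives the upper bound without the $+1$), but since your main line of attack coincides with the paper's, no further comparison is needed.
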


\begin{proof} It is easy to see  that the inequalities (\ref{E:imaginary-bounds}) follow easily from the definitions (\ref{E:wright-H_n-beta_n}), (\ref{E:wright-solutions}) and the properties of $\eta_n$ in the Remark \ref{R:eta-properties} above. For the inequality (\ref{E:real-bounds}) representing the real part of $z(n)$, we deduce from the power series of $\tan \eta_n$ and equation (\ref{E:wright-solutions}) that

\begin{align*}
x(n)=\eta_n(H_n+\eta_n)\frac{\tan \eta_n}{\eta_n}=\eta_n(H_n+\eta_n)\bigg(1+\frac{\eta_n^2}{3}+\cdots\bigg).
\end{align*}\smallskip

\noindent This implies that the inequalities $\eta_n(H_n+\eta_n)(1-\eta_n) \le x(n) \le \eta_n(H_n+\eta_n)$ hold when $n$ is sufficiently large. On the other hand, the Lemma \ref{L:wright} and Remark \ref{R:eta-properties} assert that $\beta_n-\frac{1}{2}<\eta_nH_n<\beta_n+\frac{1}{2}$ when $n$ is sufficiently large. Combining these two inequalities and the fact $0<1-\eta_n<2$, we deduce

\begin{align}\label{E:real-upper-bounds}
x(n) \le \eta_n(H_n+\eta_n)=\eta_n H_n+\eta_n^2 <\beta_n+\frac{1}{2}+\eta_n^2<\beta_n+1<0
\end{align}\smallskip

\noindent and

\begin{align}\label{E:real-lower-bounds}
x(n) \ge \eta_n(H_n+\eta_n)(1-\eta_n) >2\eta_n(H_n+\eta_n)>2\beta_n-1+2\eta_n^2>2\beta_n-1,
\end{align}\smallskip

\noindent since $\eta_n(H_n+\eta_n)<0$. Hence we deduce from the inequalities (\ref{E:real-upper-bounds}) and (\ref{E:real-lower-bounds}) the inequalities

\begin{equation}\label{L:real-imaginary-bounds-inequality-3}
2\beta_n-1 < x(n) < \beta_n+1<0.
\end{equation}\smallskip

\noindent Since $-\pi <\alpha \le \pi$, we must have  ${-\pi<\sgn(n)\, \alpha \le \pi}$ for every non-zero integer $n$. Then the desired inequalities (\ref{E:real-bounds}) follow from this fact, the inequalities (\ref{L:real-imaginary-bounds-inequality-3}) and the definition (\ref{E:wright-H_n-beta_n}). This completes the proof of the lemma.
\end{proof}

\begin{lemma}\label{L:real-imaginary-bounds-subseq} Let $m$ be a fixed positive integer such that $\log A-\log m\pi+1<-3$. We define $d_r:=2m\pi r^2-\alpha-\pi$ for every real $r>0$ and let $n_k=-mk^2$, where $k$ is a sufficiently large positive integer such that $n_k$ satisfies the inequalities {\rm (\ref{E:condition-n})}. Then we have

\begin{align}\label{E:real-imaginary-bounds-subseq-inequalities}
-5\log k < x(n_k) < -2\log k-2, \quad -d_{k+1}< y(n_k) < -d_k.
\end{align}
\end{lemma}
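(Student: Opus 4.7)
The plan is to deduce Lemma \ref{L:real-imaginary-bounds-subseq} directly from Lemma \ref{L:real-imaginary-bounds} by substituting the specific sequence $n_k = -mk^2$, which is large negative for large $k$, and carefully tracking the constants using the hypothesis $\log A - \log m\pi + 1 < -3$.

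First I would handle the imaginary part. Since $n_k = -mk^2 < 0$, the second clause of \eqref{E:imaginary-bounds} applies, giving
\[
2n_k\pi + \alpha \;<\; y(n_k) \;<\; (2n_k+1)\pi + \alpha,
\]
that is, $-2m\pi k^2 + \alpha < y(n_k) < -2m\pi k^2 + \pi + \alpha$. The upper bound equals exactly $-d_k$ by the definition $d_k = 2m\pi k^2 - \alpha - \pi$. For the lower bound I need $-d_{k+1} < -2m\pi k^2 + \alpha$, which after unpacking becomes $4m\pi k + 2m\pi > \pi$, trivially true for $k$ large. This settles the two inequalities on $y(n_k)$.

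For the real part I would plug $|n_k|=mk^2$ into \eqref{E:real-bounds}:
\[
2\log\frac{A}{(2mk^2+1)\pi}-1 \;<\; x(n_k) \;<\; \log\frac{A}{2(mk^2-1)\pi}+1.
\]
For the upper bound, for $k$ large enough I use $mk^2 - 1 \geq mk^2/2$, so that $\log(2(mk^2-1)\pi) \geq \log(m\pi) + 2\log k$, giving
\[
x(n_k) \;<\; \bigl(\log A - \log m\pi + 1\bigr) - 2\log k.
\]
By hypothesis the parenthesized quantity is $< -3 < -2$, so indeed $x(n_k) < -2\log k - 2$. For the lower bound, I use $2mk^2 + 1 \leq 3mk^2$ for $k$ large, obtaining $\log((2mk^2+1)\pi) \leq \log(3m\pi) + 2\log k$, whence
\[
x(n_k) \;>\; 2\log A - 2\log 3m\pi - 4\log k - 1.
\]
Since $2\log A - 2\log 3m\pi - 1$ is a fixed constant, for $k$ sufficiently large we have $\log k$ exceeding this constant, so $x(n_k) > -5\log k$.

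Honestly there is no real obstacle here: the argument is a routine substitution $n = -mk^2$ into Lemma \ref{L:real-imaginary-bounds} combined with the bookkeeping of logarithms. The only place the hypothesis $\log A - \log m\pi + 1 < -3$ genuinely enters is in the upper bound of $x(n_k)$, where it provides the constant slack needed to convert $\log A - \log m\pi + 1 - 2\log k$ into the form $-2\log k - 2$. The remaining three inequalities hold automatically once $k$ is taken large enough, so the "sufficiently large $k$" clause absorbs all the lower order errors.
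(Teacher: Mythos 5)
Your proposal is correct and follows essentially the same route as the paper: substitute $n_k=-mk^2$ into Lemma \ref{L:real-imaginary-bounds}, read off the $y(n_k)$ bounds from the second clause of (\ref{E:imaginary-bounds}), and massage the logarithms in (\ref{E:real-bounds}), invoking the hypothesis $\log A-\log m\pi+1<-3$ exactly where the paper does (the upper bound on $x(n_k)$) and absorbing the rest into ``$k$ sufficiently large.'' The only differences are cosmetic choices of the intermediate constants ($3mk^2$ and $mk^2/2$ versus the paper's $\log\pi$ and $\log 2$ adjustments).
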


\begin{proof} Since $n_k$ is large and negative, so the inequalities (\ref{E:real-imaginary-bounds-subseq-inequalities}) for $y(n_k)$ follows easily from the second set of inequalities in (\ref{E:imaginary-bounds}). We next observe that both inequalities

\begin{equation*}
\log(2mk^2+1)\pi \le \log mk^2\pi+\log \pi \quad \mbox{and}\quad \log ( 2mk^2-1)\pi \ge \log mk^2\pi-\log \pi
\end{equation*}\smallskip

\noindent hold for $k$ sufficiently large. Hence it follows from the inequalities (\ref{E:real-bounds}) that

\begin{align*}
x(n_k)&>2\log A-2\log(2mk^2+1)\pi-1\\
&>2\log A-2\log mk^2\pi-2\log \pi -1\\
&=-4\log k+(2\log A-2\log m\pi-2\log \pi-1)\\
&>-5\log k
\end{align*}
and
\begin{align*}
x(n_k)&<\log A-\log(2mk^2-1)\pi +1\\
&<\log A-\log mk^2\pi-\log 2+\log \pi+1\\
&=(\log A-\log m\pi+1)-2\log k-\log 2+\log \pi\\
&<-2\log k-3+\log \pi-\log 2\\
&<-2\log k-2,
\end{align*}\smallskip

\noindent completing the proof of the lemma.
\end{proof}

\begin{remark}\label{R:z-zero-rectangle}\rm We remark from the inequalities (\ref{E:real-imaginary-bounds-subseq-inequalities}) that the particular set of zeros $z(n_k)$ of equation (\ref{E:wright-equation}) must lie inside the rectangles whose vertices are given by the points $(-5\log k,\,-d_k)$, $(-2\log k-2,\,-d_k)$, $(-5\log k,\,-d_{k+1})$ and \newline $(-2\log k-2,\,-d_{k+1})$ in the complex $z$-plane.
\end{remark}

\section{\bf Zeros of an auxiliary function}

In the proof of our main results, it will become clear in \S 6 that we need to know the locations of zeros of the auxiliary function

\begin{equation}\label{E:g-definition}
g(\zeta)=\widehat{C}\e^{i\zeta}+\widehat{\sigma}\zeta^{\mu-\frac{1}{2}},
\end{equation}\smallskip

\noindent where $\mu,\,\widehat{C},\,\widehat{\sigma}$ are non-zero complex constants such that $\mu\neq \frac{1}{2}$, and $\zeta^{\mu-\frac{1}{2}}$ takes the principal branch.\footnote{The remaining case when $\mu=\frac{1}{2}$ will be discussed in the Lemma \ref{L:contour-special}.} We apply the results from \S \ref{S:location-zeros} to investigate the asymptotic locations of zeros for $g(\zeta)$. To do so, we first transform the equation $g(\zeta)=0$ into the form of (\ref{E:wright-equation}), where

\begin{equation}\label{E:z-a-definitions}
z=\frac{i\zeta}{\frac{1}{2}-\mu},\quad a=\frac{i}{\frac{1}{2}-\mu}(-\widehat{\sigma}\widehat{C}^{-1})^{\frac{1}{\frac{1}{2}-\mu}}.\footnote{We assume that $-\pi < \arg a \le \pi$.}
\end{equation}\smallskip

\noindent Let $\zeta(n)=u(n)+iv(n)$ and $\frac{1}{2}-\mu=b\e^{i\phi}$, where $n$ is an integer, $b=|\frac{1}{2}-\mu|>0$ and $-\pi<\phi \le \pi$. Then it follows from (\ref{E:z-a-definitions}) and $z(n)=x(n)+iy(n)$ that for sufficiently large positive or negative integers $n$,

\begin{align}\label{E:zeta-real-imaginary}
u(n) = (b\cos\phi) y(n)+(b\sin\phi) x(n), \quad v(n) = (b\sin\phi) y(n)-(b\cos\phi) x(n).
\end{align}\smallskip

In order to find precise asymptotic locations  of zeros of the function (\ref{E:g-definition}), we first consider the particular case that $\phi=\pi$ in (\ref{E:zeta-real-imaginary}). This {forces} $\mu>\frac{1}{2}$ and

\begin{equation*}
\zeta(n)=u(n)+iv(n)=b (-y(n)+ix(n)),
\end{equation*}\smallskip

\noindent where $b=\mu-\frac{1}{2}>0$. Therefore we obtain from Lemma \ref{L:real-imaginary-bounds-subseq} and Remark \ref{R:z-zero-rectangle} the following lemma.

\begin{lemma}\label{L:zeta-bounds} Let $\phi=\pi$ and $n_k$ be defined as in Lemma {\rm \ref{L:real-imaginary-bounds-subseq}}. Then for $k$ sufficiently large, we have

\begin{align}\label{E:zeta-bounds}
0<bd_k < u(n_k) < bd_{k+1},\quad -5b\log k< v(n_k)<-2b\log k-2b<0.
\end{align}\smallskip

\noindent In other words, the zeros $\zeta(n_k)$ must lie inside the rectangles $R_k$ whose vertices are given by the points $(bd_k,\,-5b\log k),\,(bd_k,\,-2b\log k-2b)$, $(bd_{k+1},\,-5b\log k)$ and $(bd_{k+1},\,-2b\log k-2b)$ in the $\zeta$-plane.
\end{lemma}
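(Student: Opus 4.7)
The plan is to derive the bounds on $u(n_k)$ and $v(n_k)$ by direct substitution from the bounds on $x(n_k)$ and $y(n_k)$ already supplied by Lemma \ref{L:real-imaginary-bounds-subseq}, using the special form that the change of variables (\ref{E:zeta-real-imaginary}) takes when $\phi=\pi$. Since $\cos\phi=-1$ and $\sin\phi=0$, the relations (\ref{E:zeta-real-imaginary}) collapse to $u(n)=-b\,y(n)$ and $v(n)=b\,x(n)$, so that $\zeta(n)=b(-y(n)+ix(n))$, which is exactly the representation recorded just before the statement of the lemma.

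Next, I would plug in the inequalities from Lemma \ref{L:real-imaginary-bounds-subseq}. Since $-d_{k+1}<y(n_k)<-d_k$, multiplying by $-b<0$ reverses the inequalities and yields $bd_k<-by(n_k)<bd_{k+1}$, i.e.\ $bd_k<u(n_k)<bd_{k+1}$; the positivity $bd_k>0$ follows from the definition $d_r=2m\pi r^2-\alpha-\pi$, which is positive once $k$ is large (recall $-\pi<\alpha\le\pi$). Similarly, multiplying $-5\log k<x(n_k)<-2\log k-2$ by $b>0$ gives $-5b\log k<v(n_k)<-2b\log k-2b$, and the right-hand side is negative for $k$ sufficiently large.

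Combining the two pairs of inequalities produces the statement of the lemma. The geometric reformulation, that $\zeta(n_k)$ lies inside the rectangle $R_k$ with the stated vertices, is then just the Cartesian rewrite of the two inequalities on $u(n_k)$ and $v(n_k)$, completely analogous to Remark \ref{R:z-zero-rectangle} but after rotation and scaling by $-b$ in the $z\mapsto\zeta$ correspondence. There is no real obstacle here; the only subtle point is keeping track of the sign reversal when $\phi=\pi$ (so that the lower bound on $y(n_k)$ becomes the lower bound on $u(n_k)$), and confirming that the parameter $m$ chosen in Lemma \ref{L:real-imaginary-bounds-subseq} still satisfies the condition $\log A-\log m\pi+1<-3$ with $A=|a|$ computed from (\ref{E:z-a-definitions}), which can be arranged simply by taking $m$ large enough.
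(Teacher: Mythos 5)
Your proof is correct and follows exactly the route the paper intends: the paper derives the lemma immediately from the specialization $u(n)=-b\,y(n)$, $v(n)=b\,x(n)$ of (\ref{E:zeta-real-imaginary}) at $\phi=\pi$ together with the bounds of Lemma \ref{L:real-imaginary-bounds-subseq}, which is precisely your substitution-and-sign-reversal argument. No gaps.
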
\smallskip

\begin{remark}\label{R:fourth-quadrant}{\rm In view of the Lemma \ref{L:zeta-bounds}, we easily see that, when $\phi=\pi$, \textit{all} such zeros $\zeta(n_k)$ lie in the fourth quadrant of the $\zeta$-plane and the real part of each $\zeta(n_k)$ is increasing much faster than the imaginary part in such a way, so that the argument $\arg \zeta(n_k)$ is always negative and $\arg \zeta(n_k) \to 0$ as $n_k =-mk^2 \to -\infty$ (or as $k \to +\infty$).}
\end{remark}\smallskip

Now we are ready to define one of the contours that will be used in the proof of the Theorem \ref{T:Chiang-Yu}. For any given function $g$ of the form (\ref{E:g-definition}), the contour is formed by the curves $\Gamma_1(g),\,\Gamma_2(g)$ and the line segments $\ell_1(g),\,\ell_2(g)$ which are defined as follows:

\begin{equation}\label{E:case(i)-contour}
\begin{split}
\Gamma_1(g)&:=\big\{b(d_r-2i\log r) \,:\, k \le r \le 2k\big\},\\
\Gamma_2(g)&:=\big\{b(d_r-6i\log r) \,:\, k \le r \le 2k \big\},\\
\ell_1(g)&:=\big\{b(d_k+iv) \,:\, -6\log k \le v \le -2\log k \big\},\\
\ell_2(g)&:=\big\{b(d_{2k}+iv) \,:\, -6\log (2k) \le v \le -2\log (2k) \big\}.
\end{split}
\end{equation}\smallskip

\noindent We join the line segments $\ell_1(g),\,\ell_2(g)$ and the curves $\Gamma_1(g),\,\Gamma_2(g)$ to form the contour $\Omega(g,\,k)$ for each integer $k$. We then \textit{glue} the $\Omega(g,\,k)$ together along each pair of $\ell_1(g),\,\ell_2(g)$ and the resulting set is denoted by $\D \Omega(g)=\bigcup_{k=1}^{+\infty} \Omega(g,\,k)$. Then we define $\e^{i(\pi-\phi)}\Omega(g,\,k)$ and $ \e^{i(\pi-\phi)}\Omega(g)$ as follows:

\begin{equation}\label{E:definition-contour-set}
\begin{split}
\e^{i(\pi-\phi)}\Omega(g,\,k)&:=\big\{\e^{i(\pi-\phi)}\zeta \,:\, \zeta \in \Omega(g,\,k)\big\},\\ \e^{i(\pi-\phi)}\Omega(g)&:=\big\{\e^{i(\pi-\phi)}\zeta \,:\, \zeta \in \Omega(g) \big\}.
\end{split}
\end{equation}\smallskip

Thus we have the following result:

\begin{proposition}\label{L:g-number-zeros} Let $k$ be a large positive integer and $\mu \neq \frac{1}{2}$. Then the function $g(\zeta)$ as defined in {\rm (\ref{E:g-definition})} has at least $k$ distinct zeros lying inside the contour $\e^{i(\pi-\phi)}\Omega(g,\,k)$ and infinitely many zeros lie inside the set  $\e^{i(\pi-\phi)}\Omega(g)$.
\end{proposition}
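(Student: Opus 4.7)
The plan is to reduce the general-$\phi$ situation to the already-treated special case $\phi=\pi$ of Lemma \ref{L:zeta-bounds} via the rotation inherent in (\ref{E:zeta-real-imaginary}), and then verify the geometric inclusion $R_j\subset\Omega(g,k)^{\mathrm{int}}$ for $k\le j\le 2k-1$. Starting from $g(\zeta)=0$, the substitution (\ref{E:z-a-definitions}) gives $\zeta = b e^{i(\phi-\pi/2)} z$, whence
\[
e^{i(\pi-\phi)}\,\zeta(n) \;=\; ib\,z(n).
\]
So if $z(n_k)=x(n_k)+iy(n_k)$ are the Wright solutions of Lemma \ref{L:real-imaginary-bounds-subseq} along the subsequence $n_k=-mk^2$ (with $m$ fixed once and for all large enough that both (\ref{E:condition-n}) and $\log A-\log m\pi+1<-3$ hold), then Lemma \ref{L:zeta-bounds} applies \emph{verbatim} to the rotated point $e^{i(\pi-\phi)}\zeta(n_k)=ibz(n_k)$, placing it inside the rectangle $R_k$.

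Next I would establish the geometric inclusion $R_j\subset\Omega(g,k)^{\mathrm{int}}$ for each $j\in\{k,k+1,\ldots,2k-1\}$ by direct coordinatewise comparison with (\ref{E:case(i)-contour}). The upper curve $\Gamma_1(g)$ has imaginary part $-2b\log r$; since $-2b\log r > -2b\log j - 2b$ for $r\le j+1\le 2k$, $\Gamma_1(g)$ lies strictly above the top edge of $R_j$. Symmetrically, $\Gamma_2(g)$ has imaginary part $-6b\log r$, and $-6b\log r \le -5b\log j$ for $r\ge j$ places it below the bottom edge of $R_j$. Strict monotonicity of $r\mapsto d_r$ together with $d_k\le d_j< d_{j+1}\le d_{2k}$ guarantees that the vertical segments $\ell_1(g),\ell_2(g)$ bracket every $R_j$ on the left and right. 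Consequently $\Omega(g,k)$ encloses the $k$ disjoint rectangles $R_k,\ldots,R_{2k-1}$, each containing a distinct point $ibz(n_j)$; distinctness follows because the map $n\mapsto z(n)$ from Wright's Lemma \ref{L:wright} is injective.

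Undoing the rotation gives the $k$ promised zeros of $g$ inside the rotated contour appearing in the proposition. The ``infinitely many zeros'' assertion then follows on letting $k\to+\infty$: by construction $\Omega(g)=\bigcup_{k\ge 1}\Omega(g,k)$ is formed by gluing the $\Omega(g,k)$ along their common segments $\ell_1,\ell_2$, and the argument above certifies, for every $j\ge j_0$, one zero of $g$ (after rotation) inside the cell of $\Omega(g,k)$ that contains the rectangle $R_j$. The main technical obstacle is the geometric verification in the previous paragraph --- the smooth parameterised curves $\Gamma_1(g),\Gamma_2(g)$ and the monotonicity of $d_r$ must jointly trap \emph{every} $R_j$ throughout the entire range $k\le r\le 2k$, and the thresholds on $k$ arising from Wright's expansion in Lemma \ref{L:wright}, from the subsequence hypothesis of Lemma \ref{L:real-imaginary-bounds-subseq}, and from the ``sufficiently large $k$'' clauses of Lemma \ref{L:zeta-bounds} must be made mutually compatible so that no $\zeta(n_j)$ escapes the contour.
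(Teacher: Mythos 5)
Your proposal is correct and follows essentially the same route as the paper: the $\phi=\pi$ case is handled by trapping the $k$ disjoint rectangles $R_k,\ldots,R_{2k-1}$ of Lemma \ref{L:zeta-bounds} between $\Gamma_1(g)$, $\Gamma_2(g)$, $\ell_1(g)$, $\ell_2(g)$ via the same coordinatewise inequalities, and the general case is reduced to it by the rotation $\zeta'=\e^{i(\pi-\phi)}\zeta$, under which the zeros become $ib\,z(n_k)=b(-y(n_k)+ix(n_k))$ exactly as in the paper. The only cosmetic difference is that you perform the rotation up front and cite injectivity of $n\mapsto z(n)$ for distinctness where the paper simply uses disjointness of the rectangles.
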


\begin{proof} It suffices to prove the first statement. We suppose first that $\phi=\pi$ so that the contour and the sets given by (\ref{E:definition-contour-set}) are $\Omega(g,\,k)$ and $\Omega(g)$ respectively, and all the zeros $\zeta(n_k)$ lie in the fourth quadrant of the $\zeta$-plane by {Remark \ref{R:fourth-quadrant}}. We note that for each $j \in \{0,\,1,\ldots,\,k-1\}$, the vertices of the rectangle $R_{k+j}$ are given by $(bd_{k+j},\,-5b\log (k+j)),\,(bd_{k+j},\,-2b\log (k+j)-2b)$, $(bd_{k+j+1},\,-5b\log (k+j))$ and $(bd_{k+j+1},\,-2b\log (k+j)-2b)$. Since we have

\begin{equation*}
-2\log (k+j)-2<-2 \log r, \quad -6\log r<-5\log (k+j),
\end{equation*}\smallskip

\noindent where $k+j \le r \le k+j+1$, it means \textit{geometrically} that the upper (resp. lower) edge of $R_{k+j}$ is \textit{below} (resp. \textit{above}) the curve $\Gamma_1(g)$ (resp. $\Gamma_2(g)$), see Figure \ref{F:Figure1} for an illustration.\smallskip

\begin{figure*}[h]
\begin{center}
\includegraphics[width=\textwidth, height=0.35\textheight]{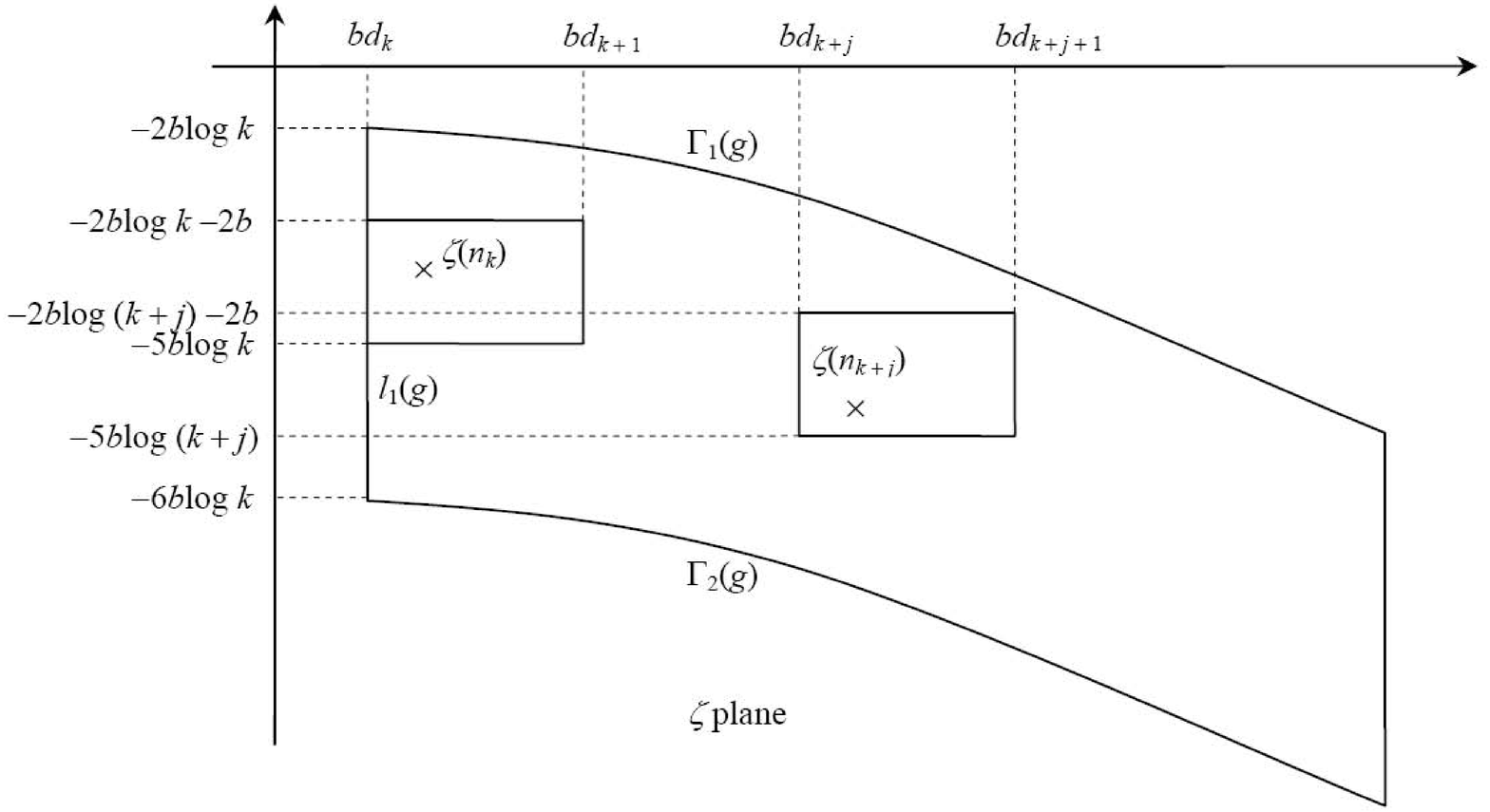}
\caption{The rectangles $R_{k+j}$ and a part of the contour $\Omega(g,\,k)$ when $\phi=\pi$.}\label{F:Figure1}
\end{center}
\end{figure*}

Thus the contour $\Omega(g,\,k)$ contains all the $k$ rectangles $R_{k+j}$, for $j \in \{0,\,1,\ldots,\,k-1\}$. By the Lemma \ref{L:zeta-bounds}, each rectangle $R_{k+j}$ contains the zero $\zeta(n_{k+j})$, for $j \in \{0,\,1,\ldots,\,k-1\}$. These zeros are \textit{distinct} because we have $R_{k+j}\cap R_{k+j'}=\emptyset$ whenever $j \neq j'$. Hence the result follows in this particular case.\smallskip

Next we suppose that $-\pi<\phi<\pi$. Then it may happen that \textit{not} all zeros $\zeta(n_k)$ lie in the fourth quadrant of the $\zeta$-plane. In this general case, we rotate the $\zeta$-plane through the angle $(\pi-\phi)$ to the $\zeta'$-plane, where $\zeta':=\e^{i(\pi-\phi)}\zeta$. Thus it follows from the relations (\ref{E:zeta-real-imaginary}) that

\begin{align*}
\zeta'(n_k)=\e^{i(\pi-\phi)}\zeta(n_k)=b(-y(n_k)+ix(n_k))
\end{align*}\smallskip

\noindent so that \textit{all} the zeros $\zeta'(n_k)$ {of $G(\zeta')=g(\e^{i(\phi-\pi)}\zeta')$} lie in the fourth quadrant of the $\zeta'$-plane by the Lemma \ref{L:zeta-bounds} and the Remark \ref{R:fourth-quadrant}. Thus the argument in the first part applies to this general case with respect to the contour $\e^{i(\pi-\phi)}\Omega(g,\,k)$ and the set $\e^{i(\pi-\phi)}\Omega(g)$, thus completing the proof of the proposition.
\end{proof}

\section{\bf Proof of Theorem \ref{T:Chiang-Yu}}

\subsection{Sufficiency part} Suppose that $f(z)$ is subnormal. Then the Remark \ref{R:Equivalent-conditions} asserts that we must have $A=B=0$ and one of the equations in (\ref{E:conditions}) holds. Thus according to (\ref{E:lommel-sol}), we have $\lambda(f)<+\infty$. This proves the sufficiency part of the theorem.\smallskip

\subsection{Necessary part} In order to complete the proof of the Theorem \ref{T:Chiang-Yu}, that is, to find the values of $\mu_j \pm \nu$ under the assumption that $\lambda(f)<+\infty$, we consider the function $f(z)$ in the form (\ref{E:f-solution-hankel}). Then we \textit{first} need the following result:

\begin{theorem}\label{T:exponent-of-convergence}  If $C \not=0$ or $D\not=0$, then $\lambda(f)=+\infty$.
\end{theorem}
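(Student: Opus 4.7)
My plan is to argue by contradiction. Assume $\lambda(f)<+\infty$ together with $C\neq 0$ (the case $D\neq 0$ is entirely symmetric, using $H_\nu^{(2)}$ in place of $H_\nu^{(1)}$ and the upper $\zeta$-half-plane in place of the lower). I will exhibit super-polynomially many zeros of $f$ in each disk $|z|\le r$, in violation of $n(r,1/f)=O(r^{\lambda(f)+\varepsilon})$. Since $\e^{-Nz}$ never vanishes and each composite $H_\nu^{(1,2)}(L\e^{Mz})$, $S_{\mu_j,\nu}(L\e^{Mz})$ is entire in $z$ (Remark \ref{R:Bessel-lommel-independence-branches}), the zeros of $f$ are precisely the solutions of $y(L\e^{Mz})=0$, and it is enough to produce many zeros of $y$ on the principal branch in the $\zeta$-plane and pull them back through $\zeta=L\e^{Mz}$.

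The next step is to reduce $y(\zeta)$ asymptotically to the auxiliary function $g(\zeta)$ of \eqref{E:g-definition}. Using the classical expansions $H_\nu^{(1)}(\zeta),H_\nu^{(2)}(\zeta)\sim\sqrt{2/(\pi\zeta)}\,\e^{\pm i(\zeta-\nu\pi/2-\pi/4)}$ and $S_{\mu_j,\nu}(\zeta)\sim\zeta^{\mu_j-1}$ in a sector of the lower half-plane, and noting that by the distinctness hypothesis a single index $j_0$ attains the maximum of $\Re(\mu_j)$, I factor $\sqrt{2/(\pi\zeta)}$ out of $y$ to obtain
\begin{equation*}
y(\zeta)=\sqrt{\tfrac{2}{\pi\zeta}}\bigl[\widehat C\,\e^{i\zeta}+\widehat\sigma\,\zeta^{\mu_{j_0}-1/2}+r(\zeta)\bigr],
\end{equation*}
with $\widehat C=C\,\e^{-i(\nu\pi/2+\pi/4)}\neq0$, $\widehat\sigma$ a nonzero constant multiple of $\sigma_{j_0}$, and remainder $r(\zeta)$ composed of (i) the $O(|\zeta|^{-1})$ tails of the Bessel and Lommel asymptotics, (ii) the subdominant Lommel terms $\sigma_j\zeta^{\mu_j-1/2}$ with $\Re(\mu_j)<\Re(\mu_{j_0})$, and (iii) the term $DH_\nu^{(2)}(\zeta)$, exponentially suppressed in the lower half-plane. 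The bracket matches $g(\zeta)$ of \eqref{E:g-definition} with $\mu=\mu_{j_0}$; the special subcase $\mu_{j_0}=\tfrac12$ is handled separately by Lemma \ref{L:contour-special}.

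The heart of the proof is Rouch\'{e}'s theorem on the contour $\e^{i(\pi-\phi)}\Omega(g,k)$ of \eqref{E:definition-contour-set}. Proposition \ref{L:g-number-zeros} already supplies $k$ distinct zeros of $g$ inside this contour; the construction of the four pieces $\Gamma_1(g),\Gamma_2(g),\ell_1(g),\ell_2(g)$ is tailored so that $|g(\zeta)|$ admits a uniform positive lower bound along the contour, and I would verify piece by piece that each component of $r(\zeta)$ is strictly dominated by this bound for all sufficiently large $k$: the asymptotic tails give $O(|\zeta|^{-1})$ relative corrections, the subleading Lommel contributions decay by the strict gap $\Re(\mu_{j_0}-\mu_j)>0$, and $DH_\nu^{(2)}$ is exponentially small where $\e^{i\zeta}$ is exponentially large. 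Rouch\'{e} then transfers the $k$ zeros of $g$ to $k$ zeros of $y$ inside $\e^{i(\pi-\phi)}\Omega(g,k)$.

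By Lemma \ref{L:zeta-bounds} and Remark \ref{R:z-zero-rectangle} these zeros $\zeta_\ast$ satisfy $|\zeta_\ast|=O(k^2)$, so their preimages $z_\ast$ under $L\e^{Mz}$ on the principal sheet of the logarithm satisfy $|z_\ast|\le C_0\log k$ for a constant $C_0=C_0(L,M)$; distinct $\zeta_\ast$ produce distinct $z_\ast$. Taking $k=\lfloor\e^{r/(2C_0)}\rfloor$ packs $k$ distinct zeros of $f$ inside $|z|\le r$, yielding $n(r,1/f)\ge\e^{r/(2C_0)}$ for all large $r$, which contradicts $\lambda(f)<+\infty$. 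The main obstacle I foresee is the uniform Rouch\'{e} estimate on the broken-line--logarithmic-arc contour $\e^{i(\pi-\phi)}\Omega(g,k)$ whose geometry depends on $k$: one must track, along each of $\Gamma_i(g)$ and $\ell_i(g)$ separately, the lower bound on $|g|$ against each of the three components of $r(\zeta)$, the distinctness of the $\Re(\mu_j)$ being crucial to discard the subdominant Lommel terms.
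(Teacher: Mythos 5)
Your proposal follows essentially the same route as the paper's proof: reduce $y$ through the Hankel and Lommel asymptotics to the auxiliary function $g(\zeta)=\widehat C\e^{i\zeta}+\widehat\sigma\zeta^{\mu-\frac12}$, run Rouch\'e's theorem on the contours $\e^{i(\pi-\phi)}\Omega(g,k)$ built from Wright's zero locations (Proposition \ref{L:g-number-zeros}), and pull the resulting zeros back through $\zeta=L\e^{Mz}$ to force $\lambda(f)=+\infty$. Two small caveats, neither fatal: on $\Omega(g,k)$, which sits only at height $\asymp\log k$ below the real axis, the factors $\e^{\pm i\zeta}$ are polynomially (not exponentially) large or small in $k$, which is still enough for the domination; and the subcase $\mu_{j_0}=\frac12$ genuinely requires the different auxiliary function $\widehat C\e^{i\zeta}+\widehat D\e^{-i\zeta}+\widehat\sigma_n$ together with rectangular contours along the horizontal lines of its zeros, a construction you correctly delegate to Lemma \ref{L:contour-special} but which occupies a substantial part of the paper's argument.
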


\begin{proof}[Proof of Theorem \ref{T:exponent-of-convergence}]
We let $y(\zeta)$ in the form (\ref{E:general-soln-hankel}) be the general solution of (\ref{E:generalized-Lommel}). Since the Lemma \ref{L:lommel-independence} asserts that the Lommel functions $S_{\mu_j,\,\nu}(\zeta)$, $j=1,\,2,\ldots,\,n$, are linearly independent over $\mathbb{C}$ and that not all $\sigma_j$ are zero, so the summand in (\ref{E:general-soln-hankel}) is not identically zero.

\noindent Without loss of generality, we may assume that $\sigma_n \neq 0$ and the constants $\mu_j,\, j=1,\,2,\ldots,\,n$, in the Theorem \ref{T:Chiang-Yu} satisfy {$\Re(\mu_1)<\Re(\mu_2)<\cdots<\Re(\mu_n)$.} In order to prove the Theorem \ref{T:exponent-of-convergence}, we show that the general solution (\ref{E:general-soln-hankel}) has infinitely many zeros \textit{in the principal branch of $H_\nu^{(1)}(\zeta),\,H_\nu^{(2)}(\zeta)$ and $S_{\mu,\,\nu}(\zeta)$}.\footnote{That is $-\pi <\arg \zeta<\pi$.} The idea of our proof is to apply asymptotic expansions of the corresponding special functions and Rouch\'{e}'s theorem on suitably chosen contours.\smallskip

When $-\pi<\arg \zeta<\pi$, we substitute the asymptotic expansions (\ref{E:hankel1-asy}), (\ref{E:hankel2-asy}) and (\ref{E:lommel-asymptotic-expansion}) into the solution (\ref{E:general-soln-hankel}) to yield\footnote{This refers to the principal branch of the Hankel functions $H_{\nu}^{(1)}(\zeta),\, H_{\nu}^{(2)}(\zeta)$ and the Lommel functions $S_{\mu_j,\,\nu}(\zeta),\, j=1,\,2,\ldots,\,n$.}

\begin{align*}
\widehat{y}(\zeta) &=\bigg(\frac{\pi \zeta}{2}\bigg)^{\frac{1}{2}}y(\zeta) \notag\\
&=\widehat{C}\e^{i\zeta}\left[\sum_{k=0}^{p-1}\frac{(\nu,\,k)}{(-2i\zeta)^k}+O(\zeta^{-p}) \right]+\widehat{D}\e^{-i\zeta}\left[\sum_{k=0}^{p-1}\frac{(\nu,\,k)}{(2i\zeta)^k} +O(\zeta^{-p})\right] \notag\\
&\qquad+\sum_{j=1}^n \widehat{\sigma}_j \zeta^{\mu_j-\frac{1}{2}}\left[\sum_{k=0}^{p-1}\frac{(-1)^k c_{k,\,j}}{\zeta^{2k}} +O(\zeta^{-2p+1})\right],\notag
\end{align*}\smallskip

\noindent where $(\nu,\, k):=\frac{(-1)^k(\frac12-\nu)_k(\frac12+\nu)_k}{k!}$, $\widehat{C}:=C\e^{-i(\frac{1}{2}\nu\pi+\frac{1}{4}\pi)}$,
$\widehat{D}:=D\e^{i(\frac{1}{2}\nu\pi+\frac{1}{4}\pi)}$, $\D \widehat{\sigma}_j:=\sigma_j\Big(\frac{\pi}{2}\Big)^{\frac{1}{2}}$, where $j=1,\,2,\ldots,\,n$. This gives, when $p=1$, that

\begin{align}\label{E:Modify-soln}
\widehat{y}(\zeta)&=\widehat{C}\e^{i\zeta}\big[1+O(\zeta^{-1})\big]+\widehat{D}\e^{-i\zeta}\big[1+O(\zeta^{-1})\big]
+\sum_{j=1}^n\widehat{\sigma}_j\zeta^{\mu_j-\frac{1}{2}}\big[1+O(\zeta^{-1})\big].
\end{align}\smallskip

We distinguish two main cases: {\bf Case I}: $\mu_n \neq \frac{1}{2}$ and {\bf Case II}: $\mu_n=\frac{1}{2}$, which will then be further split into different subcases.\smallskip

\noindent {\bf Case I}: We suppose that $\mu_n \neq \frac{1}{2}$.\smallskip

Without loss of generality, we may assume that $C \neq 0$ so that $\widehat{C} \neq 0$. We choose the function (\ref{E:g-definition}) to be

\begin{equation}\label{E:g-reduced}
g(\zeta)=\widehat{C}\e^{i\zeta}+\widehat{\sigma}_n\zeta^{\mu_n-\frac{1}{2}},
\end{equation}\smallskip

\noindent where $\frac{1}{2}-\mu_n=b\e^{i\phi},\,b=|\frac{1}{2}-\mu_n|$ and $-\pi <\phi \le \pi$. Moreover, we assume that the chosen integer $m$ in the Lemma \ref{L:real-imaginary-bounds-subseq} also satisfies the inequality

\begin{equation}\label{E:m-condition}
{(bm\pi)^{|\Re(\mu_n)-\frac{1}{2}|}|\widehat{\sigma}_n|>2|\widehat{C}|\e^{|\Im(\mu_n)|\pi}.}
\end{equation}\smallskip

There are two subcases in {\bf Case I}. They are {\bf Subcase A}: $\phi=\pi$ and {\bf Subcase B}: $\phi \neq \pi$.\smallskip

\begin{itemize}
  \item[] {\bf Subcase A}: $\phi=\pi$. Then  the definition shows that $\frac{1}{2}-\mu_n=b\e^{i\pi}=-b$ which implies that $\mu_n$ must be a real number such that $\mu_n>\frac{1}{2}$ and $b=\mu_n-\frac{1}{2}>0$. We obtain from (\ref{E:Modify-soln}) and (\ref{E:g-reduced}) that

\begin{align}\label{E:Case-(i)-inequality}
|\widehat{y}(\zeta)-g(\zeta)|&=\bigg|\Big[\widehat{D}\e^{-i\zeta}+\widehat{C}\e^{i\zeta}O(\zeta^{-1})
+\widehat{D}\e^{-i\zeta}O(\zeta^{-1})\Big] \notag\\
&\qquad+O(\zeta^{\mu_n-\frac{3}{2}})+\sum_{j=1}^{n-1} \widehat{\sigma}_j \zeta^{\mu_j-\frac{1}{2}}\left[1+O(\zeta^{-1})\right]\bigg| \notag\\
&\le \Big|\widehat{D}\e^{-i\zeta}+\widehat{C}\e^{i\zeta}O(\zeta^{-1})+\widehat{D}\e^{-i\zeta}O(\zeta^{-1})\Big|
+O(|\zeta|^\kappa),
\end{align}\smallskip

\noindent where $\kappa$ is defined by

\begin{equation}\label{E:kappa-definition}
\kappa:=\max \bigg\{\Re(\mu_{n-1})-\frac{1}{2},\,\Re(\mu_n)-\frac{3}{2}\bigg\}<{\bigg|\mu_n-\frac{1}{2}\bigg|}=b.
\end{equation}\smallskip

We show that the inequality

\begin{equation}\label{E:main-inequality}
|\widehat{y}(\zeta)-g(\zeta)|<|g(\zeta)|
\end{equation}\smallskip

\noindent holds on the contour $\Omega(g,\,k)$ for all $k$ sufficiently large. In fact, it is always true that {$|\widehat{C}\e^{i\zeta}O(\zeta^{-1})|<|\widehat{C}\e^{i\zeta}|$, $|\widehat{D}\e^{-i\zeta}O(\zeta^{-1})|<|\widehat{D}\e^{-i\zeta}|$} and\smallskip

\begin{equation}\label{E:kappa-inequality}
|\zeta^\kappa| < |\zeta^{\frac{b+\kappa}{2}}|,
\end{equation}\smallskip

\noindent so it suffices to compare the values of {$|\widehat{D}\e^{-i\zeta}|,\, |\widehat{C}\e^{i\zeta}|$ and $|\zeta^{\frac{b+\kappa}{2}}|$} along the contour $\Omega(g,\,k)$. If $\zeta \in \ell_1(g)$, then we have $\zeta=b(d_k-i\gamma \log k)$ (see (\ref{E:case(i)-contour})), where $2 \le \gamma \le 6$; and if $\zeta \in \Gamma_1(g)$, then we have $\zeta=b(d_r-2i\log r)$, where $k \le r \le 2k$. We deduce that

\begin{align}\label{E:estimates-1}
|\e^{\pm i\zeta}|=\left\{
                    \begin{array}{ll}
                      k^{\pm b\gamma}, & \hbox{if $\zeta \in \ell_1(g)$;} \\
                      r^{\pm 2b}, & \hbox{if $\zeta \in \Gamma_1(g)$,}
                    \end{array}
                  \right.
\end{align}\smallskip

\noindent and for $k$ sufficiently large that

\begin{align}\label{E:estimates-2}
\left\{
  \begin{array}{ll}
    bm\pi k^2<|\zeta|<4bm\pi k^2, & \hbox{if $\zeta \in \ell_1(g)$;} \\
    bm\pi r^2<|\zeta|<4bm\pi r^2, & \hbox{if $\zeta \in \Gamma_1(g)$.}
  \end{array}
\right.
\end{align}\smallskip

On the one hand, for all sufficiently large $k$, it follows from (\ref{E:m-condition}) and (\ref{E:estimates-2}) that

\begin{align}\label{E:estimates-4}
\left\{
  \begin{array}{ll}
    2|\widehat{C}|k^{2b}<(bm\pi)^b |\widehat{\sigma}_n|  k^{2b} & \\
     \hspace{1.2cm}< |\widehat{\sigma}_n\zeta^b| < (4bm\pi)^b |\widehat{\sigma}_n|k^{2b}, & \hbox{if $\zeta \in \ell_1(g)$;} \\
    2|\widehat{C}|r^{2b}<(bm\pi)^b |\widehat{\sigma}_n| r^{2b} & \\
    \hspace{1.2cm}< |\widehat{\sigma}_n\zeta^b| < (4bm\pi)^b |\widehat{\sigma}_n| r^{2b}, & \hbox{if $\zeta \in \Gamma_1(g)$.}
  \end{array}
\right.
\end{align}\smallskip

\noindent But the triangle inequality

\begin{align}\label{E:triangle-inequality}
|g(\zeta)| \ge \big||\widehat{C}\e^{i\zeta}|-|\widehat{\sigma}_n\zeta^{\mu_n-\frac{1}{2}}|\big|,
\end{align}\smallskip

\noindent together with the relations (\ref{E:estimates-1}) and (\ref{E:estimates-4}) imply for $k\ge k_0$ for some positive integer $k_0$ that
\begin{align}\label{E:estimates-6}
|g(\zeta)|& \ge \left\{
                 \begin{array}{ll}
                   \D \big||\widehat{C}|k^{b\gamma}-|\widehat{\sigma}_n\zeta^b|\big|, & \hbox{if $\zeta \in \ell_1(g)$;} \\
                   \D \big||\widehat{\sigma}_n\zeta^b|-|\widehat{C}|r^{2b}\big|, & \hbox{if $\zeta \in \Gamma_1(g)$,}
                 \end{array}
               \right. \notag\\
          & > \left\{
                 \begin{array}{ll}
                   \D |\widehat{C}|k^{b\gamma}/2, & \hbox{if $\zeta \in \ell_1(g)$;} \\
                   \D |\widehat{C}|r^{2b}/2, & \hbox{if $\zeta \in \Gamma_1(g)$,}
                 \end{array}
               \right.
\end{align}\smallskip
where the lower estimate for the case $\zeta \in \ell_1(g)$ is trivial when $\gamma=2$ and the case when $\gamma>2$ follows since the factor $(4bm\pi)^b$ from (\ref{E:estimates-4}) is a constant. On the other hand, we obtain from {the relations (\ref{E:Case-(i)-inequality}), (\ref{E:kappa-inequality})}, (\ref{E:estimates-1}) and (\ref{E:estimates-2}) that

\begin{align}\label{E:estimates-3}
|\widehat{y}(\zeta)-g(\zeta)|&\le \left\{
   \begin{array}{ll}
     \D |\widehat{D}|k^{-b\gamma}+K_1\big(|\widehat{C}|k^{b\gamma}+|\widehat{D}|k^{-b\gamma}\big)k^{-2}+K_2|\zeta|^{\frac{b+\kappa}{2}}, & \hbox{if $\zeta \in \ell_1(g)$;}\\
     \D |\widehat{D}|r^{-2b}+K_3\big(|\widehat{C}|r^{2b}+|\widehat{D}|r^{-2b}\big)r^{-2}+K_4|\zeta|^{\frac{b+\kappa}{2}},& \hbox{if $\zeta \in \Gamma_1(g)$,}
   \end{array}
 \right. \notag\\
& \le \left\{
        \begin{array}{ll}
          \D |\widehat{D}|k^{-b\gamma}+K_5k^{b\gamma-2}+K_6k^{b+\kappa}, &\hbox{if $\zeta \in \ell_1(g)$;}  \\
          \D |\widehat{D}|r^{-2b}+K_7 r^{2b-2}+K_8r^{b+\kappa}, & \hbox{if $\zeta \in \Gamma_1(g)$,}
        \end{array}
      \right. \notag\\
& \le \left\{
        \begin{array}{ll}
          K_9 k^{\kappa_1}, & \hbox{if $\zeta \in \ell_1(g)$;} \\
          K_{10} r^{\kappa_2}, & \hbox{if $\zeta \in \Gamma_1(g)$,}
        \end{array}
      \right.
\end{align}\smallskip

\noindent where $\kappa_1:=\max\{-b\gamma,\,b\gamma-2,\, b+\kappa\}$, $\kappa_2:=\max\{-2b,\,2b-2,\, b+\kappa\}$ and $K_1,\, K_2,\ldots,\,K_{10}$ are some fixed positive constants depending only on $b,\, m$ (see Lemma \ref{L:real-imaginary-bounds-subseq}) and $k_0$. Note that it is easy to check that {$b\gamma>\kappa_1$ and $2b>\kappa_2$} hold trivially, we deduce from the inequalities (\ref{E:estimates-6}) and (\ref{E:estimates-3}) that the inequality (\ref{E:main-inequality}) holds on $\ell_1(g)$ and $\Gamma_1(g)$, and then similarly it also holds on $\ell_2(g)$ and $\Gamma_2(g)$. Hence the desired inequality (\ref{E:main-inequality}) holds on the contour $\Omega(g,\,k)$.\smallskip

  \item[] {\bf Subcase B}: If $\phi \neq \pi$, then it may happen as described in the proof of the Proposition \ref{L:g-number-zeros} that \textit{not} all zeros $\zeta(n_k)$ lie in the fourth quadrant of the $\zeta$-plane, where the integers $n_k$ are also defined in the Lemma \ref{L:real-imaginary-bounds-subseq}. However, one can rotate the $\zeta$-plane through the angle ($\pi-\phi$) as described in the Proposition \ref{L:g-number-zeros} (see also its proof) so that all such zeros can only lie in the fourth quadrant of the $\zeta'$-plane.\smallskip

      {In this circumstance}, we note that the inequalities (\ref{E:Case-(i)-inequality}) and (\ref{E:triangle-inequality}) are now replaced by\smallskip

      \begin{align}\label{E:Case-(i)-inequality-modify}
      |\widehat{Y}(\zeta')-G(\zeta')| &\le \Big| \widehat{D}\e^{-i\e^{i(\phi-\pi)}\zeta'}+\widehat{C}\e^{i\e^{i(\phi-\pi)}\zeta'}O(\zeta'^{-1}) \notag\\
      &\qquad+\widehat{D}\e^{-i\e^{i(\phi-\pi)}\zeta'}O(\zeta'^{-1})\Big|+O(|\zeta'|^\kappa)
      \end{align}\smallskip

      \noindent and\smallskip

      \begin{align}\label{E:triangle-inequality-modify}
      |G(\zeta')| \ge \Big| \big|\widehat{C}\e^{i\e^{i(\phi-\pi)}\zeta'}\big|-\big|\widehat{\sigma}_n\zeta'^{\Re(\mu_n)-\frac{1}{2}}\big|
      \e^{-\Im(\mu_n)\arg (\e^{i(\phi-\pi)}\zeta')} \Big|
      \end{align}\smallskip

      \noindent respectively, where $\widehat{Y}(\zeta')=\widehat{y}(\e^{i(\phi-\pi)}\zeta')$, $G(\zeta')=g(\e^{i(\phi-\pi)}\zeta')$ and the constant $\kappa$ is given by (\ref{E:kappa-definition}). Moreover, the relations (\ref{E:estimates-1}) and the inequalities (\ref{E:estimates-2}) are replaced by\smallskip

      \begin{align}\label{E:estimates-9}
      \big|\e^{\pm i\e^{i(\phi-\pi)}\zeta'}\big|&=\left\{
                                                    \begin{array}{ll}
                                                      k^{\pm b\gamma}, & \hbox{if $\zeta' \in \e^{i(\pi-\phi)}\ell_1(g)$;} \\
                                                      r^{\pm 2b}, & \hbox{if $\zeta' \in \e^{i(\pi-\phi)}\Gamma_1(g)$,}
                                                    \end{array}
                                                  \right.
      \end{align}\smallskip

      \noindent and\smallskip

      \begin{align}\label{E:estimates-5}
      \left\{
        \begin{array}{ll}
          bm\pi k^2<|\zeta'|<4bm\pi k^2, & \hbox{if $\zeta' \in \e^{i(\pi-\phi)}\ell_1(g)$;} \\
          bm\pi r^2<|\zeta'|<4bm\pi r^2, & \hbox{if $\zeta' \in \e^{i(\pi-\phi)}\Gamma_1(g)$}
        \end{array}
      \right.
      \end{align}\smallskip

      \noindent respectively, where $b=|\frac{1}{2}-\mu_n|>\Re(\mu_n)-\frac{1}{2}$, $2 \le \gamma \le 6$, $k \le r \le 2k$, where $\ell_1(g)$ and $\Gamma_1(g)$ are defined in (\ref{E:case(i)-contour}).\smallskip

      Now we further distinguish two cases between \textbf{(1)} $\Re(\mu_n)>\frac{1}{2}$ and \textbf{(2)} $\Re(\mu_n)<\frac{1}{2}$.\smallskip

      \textbf{(1)} If $\Re(\mu_n)>\frac{1}{2}$, then $\Re(\mu_n)-\frac{1}{2}>0$ and it follows from the inequalities (\ref{E:m-condition}) and (\ref{E:estimates-5}) that the inequalities (\ref{E:estimates-4}) are replaced by\smallskip

      \begin{align}\label{E:estimates-7}
      \left\{
      \begin{array}{ll}
      2|\widehat{C}|\e^{|\Im(\mu_n)|\pi}k^{2({\Re(\mu_n)-\frac{1}{2}})}< |\widehat{\sigma}_n\zeta'^{\Re(\mu_n)-\frac{1}{2}}| &\\
      \hspace{1.2cm}< (4bm\pi)^{\Re(\mu_n)-\frac{1}{2}} |\widehat{\sigma}_n|k^{2({\Re(\mu_n)-\frac{1}{2}})}, & \hbox{if $\zeta' \in \e^{i(\pi-\phi)}\ell_1(g)$;} \\
      &\\
      2|\widehat{C}|\e^{|\Im(\mu_n)|\pi}r^{2({\Re(\mu_n)-\frac{1}{2}})}< |\widehat{\sigma}_n\zeta'^{\Re(\mu_n)-\frac{1}{2}}| &\\
      \hspace{1.2cm}< (4bm\pi)^{\Re(\mu_n)-\frac{1}{2}} |\widehat{\sigma}_n| r^{2({\Re(\mu_n)-\frac{1}{2}})}, & \hbox{if $\zeta' \in \e^{i(\pi-\phi)}\Gamma_1(g)$} \\
      \end{array}
      \right.
      \end{align}\smallskip

      \noindent for $k$ sufficiently large. Thus the inequality (\ref{E:triangle-inequality-modify}) together with (\ref{E:estimates-7}) yield, for $k \ge k_1$ for some sufficiently large positive integer $k_1$, that\smallskip

      \begin{align}\label{E:estimates-8}
      |G(\zeta')|>\left\{
                    \begin{array}{ll}
                      |\widehat{C}|k^{b\gamma}/2, & \hbox{if $\zeta' \in \e^{i(\pi-\phi)}\ell_1(g)$;} \\
                      |\widehat{C}|r^{2b}/2, & \hbox{if $\zeta' \in \e^{i(\pi-\phi)}\Gamma_1(g)$.}
                    \end{array}
                  \right.
      \end{align}\smallskip

      \textbf{(2)} If $\Re(\mu_n)<\frac{1}{2}$, then we have $\frac{1}{2}-\Re(\mu_n)>0$ and the inequalities (\ref{E:estimates-4}) are now replaced by\smallskip

      \begin{align*}
      \left\{
      \begin{array}{ll}
      \D \frac{(4bm\pi)^{\Re(\mu_n)-\frac{1}{2}} |\widehat{\sigma}_n|}{k^{2(\frac{1}{2}-\Re(\mu_n))}}< |\widehat{\sigma}_n\zeta'^{\Re(\mu_n)-\frac{1}{2}}|<\frac{(bm\pi)^{\Re(\mu_n)-\frac{1}{2}} |\widehat{\sigma}_n|}{k^{2(\frac{1}{2}-\Re(\mu_n))}}, & \hbox{if $\zeta' \in \e^{i(\pi-\phi)}\ell_1(g)$;} \\
      \D \frac{(4bm\pi)^{\Re(\mu_n)-\frac{1}{2}} |\widehat{\sigma}_n|}{r^{2(\frac{1}{2}-\Re(\mu_n))}}< |\widehat{\sigma}_n\zeta'^{\Re(\mu_n)-\frac{1}{2}}|<\frac{(bm\pi)^{\Re(\mu_n)-\frac{1}{2}} |\widehat{\sigma}_n|}{r^{2(\frac{1}{2}-\Re(\mu_n))}}, & \hbox{if $\zeta' \in \e^{i(\pi-\phi)}\Gamma_1(g)$}
      \end{array}
      \right.
      \end{align*}\smallskip

      \noindent for $k$ sufficiently large. Therefore we deduce from these that\smallskip

      \begin{align*}
      \lim_{k \to +\infty \atop{\zeta' \in \e^{i(\pi-\phi)}\ell_1(g)}}|\widehat{\sigma}_n\zeta'^{\Re(\mu_n)-\frac{1}{2}}|=0 \quad \mbox{and}\quad
      \lim_{r \to +\infty \atop{\zeta' \in \e^{i(\pi-\phi)}\Gamma_1(g)}}|\widehat{\sigma}_n\zeta'^{\Re(\mu_n)-\frac{1}{2}}|=0.
      \end{align*}\smallskip

      On the one hand, these limits show that the inequality (\ref{E:triangle-inequality-modify}) imply for $k \ge k_2$ for some sufficiently large positive integer $k_2$ that the inequalities (\ref{E:estimates-8}) hold in this case. On the other hand, it follows from the relations (\ref{E:kappa-inequality}), (\ref{E:Case-(i)-inequality-modify}), (\ref{E:estimates-9}) and (\ref{E:estimates-5}) that the inequalities\smallskip

      \begin{align}\label{E:estimates-10}
      |\widehat{Y}(\zeta')-G(\zeta')|\le\left\{
                                          \begin{array}{ll}
                                            K_{11}k^{\kappa_1}, & \hbox{if $\zeta' \in \e^{i(\pi-\phi)}\ell_1(g)$;} \\
                                            K_{12}r^{\kappa_2}, & \hbox{if $\zeta' \in \e^{i(\pi-\phi)}\Gamma_1(g)$}
                                          \end{array}
                                        \right.
      \end{align}\smallskip

      \noindent hold in this \textbf{Subcase B}, where $K_{11},\, K_{12}$ are some positive constants, $\kappa_1=\max\{-b\gamma,\,b\gamma-2,\,b+\kappa\}$ and $\kappa_2=\max\{-2b,\,2b-2,\,b+\kappa\}$ so that (\ref{E:estimates-8}) and (\ref{E:estimates-10}) imply the inequality

      \begin{align*}
      |\widehat{Y}(\zeta')-G(\zeta')|<|G(\zeta')|
      \end{align*}\smallskip

      \noindent holds on the contour $\e^{i(\pi-\phi)}\Omega(g,\,k)$ for all $k$ sufficiently large. Hence, our desired inequality (\ref{E:main-inequality}) still holds in this general case after we transform the $\zeta'$-plane back to the $\zeta$-plane.
\end{itemize} \smallskip

\noindent {\bf Case II}: Next, we suppose that $\mu_n=\frac{1}{2}$.\smallskip

Unfortunately, the contour $\Omega(g,\,k)$ and the auxiliary function defined in (\ref{E:case(i)-contour}) and  (\ref{E:g-reduced}) respectively, do not seem to apply in this case. This is because the zeros of (\ref{E:g-reduced}) distribute evenly on a straight line parallel to the real axis, and hence the region enclosed by the contour $\Omega(g,\,k)$ can only contain \textit{finitely many} such zeros for every positive integer $k$. We choose the alternative auxiliary function to be:
	\begin{equation}\label{E:g-hat-definition}
		\widehat{g}(\zeta)=\widehat{C}\e^{i\zeta}+\widehat{D}\e^{-i\zeta}+\widehat{\sigma}_n.
	\end{equation}\smallskip
Without loss of generality, we continue to assume that $\widehat{C} \neq 0$. It remains to construct a suitable contour that contains the zeros of (\ref{E:g-hat-definition}) which are given by the following lemma.

      \begin{lemma}\label{L:contour-special} Let $k$ be an integer.
      \begin{enumerate}
      \item[(a)] If $\widehat{D} \neq 0$, then the zeros of {\rm (\ref{E:g-hat-definition})} are given by
      \begin{equation}\label{E:zeros-g-hat-nonzero}
      \zeta_\pm(k) =2k\pi+\theta_\pm+i\log |\Delta_\pm|,
      \end{equation}\smallskip
      \noindent where $\Delta_\pm^{-1}$ are solutions of the quadratic equation $\widehat{C}x^2+\widehat{\sigma}_nx+\widehat{D}=0$, which are given by $\Delta_\pm^{-1}:=\Big(-\widehat{\sigma}_n \pm \sqrt{\widehat{\sigma}_n^2-4\widehat{C}\cdot\widehat{D}}\Big)/2\widehat{C}$ and $\theta_\pm$ are the principal arguments of $\Delta_\pm^{-1}$.
        \item[(b)] If $\widehat{D}=0$, then the zeros of {\rm (\ref{E:g-hat-definition})} are given by
      \begin{equation}\label{E:zeros-g-hat-zero}
      \zeta_0(k)=2k\pi+\theta_0+i\log|\Delta_0|,
      \end{equation}\smallskip
      \noindent where $\Delta_0^{-1}:=-\widehat{\sigma}_n/\widehat{C}$ and $\theta_0$ is the principal argument of $\Delta_0^{-1}$.
      \end{enumerate}
      \end{lemma}
We omit its proof.
      \begin{remark}\label{R:contour-special-zeros} \rm We remark that none of the $\Delta_+$, $\Delta_-$ or $\Delta_0$ can be zero. Otherwise, $\widehat{D}$ or $\widehat{C}$ would be zero which contradicts the assumption. Moreover, let $L_+$ and $L_-$ be two horizontal straight lines on which the zeros of the equation (\ref{E:zeros-g-hat-nonzero}) fall on, such that $L_+$ corresponds to the zeros of $\zeta_+$ and  $L_-$ corresponds to the zeros of $\zeta_-$ in Lemma \ref{L:contour-special}(a). Similarly, we let $L_0$ denote the straight line representing the zeros of the equation (\ref{E:zeros-g-hat-zero}) in Lemma \ref{L:contour-special}(b). Both $L_0,\,L_+$ and $L_-$ are parallel to the real axis in the $\zeta$-plane.
      \end{remark} \smallskip

      The construction of the contour is divided into different cases depending on whether the $D$ vanishes. They are, {\bf Subcase A}: $\widehat{D} \neq 0$ and {\bf Subcase B}: $\widehat{D}=0$. The {\bf Subcase A} is further divided into {\bf (1)} $|\Delta_+| \neq |\Delta_-|$ and {\bf (2)} $|\Delta_+| = |\Delta_-|$. The {\bf (2)} is divided into  (i) $\theta_+=\theta_-$ and (ii) $\theta_+ \neq \theta_-$.
\smallskip

      Now we can start the construction of the contour.\smallskip

      \begin{itemize}
        \item[] {\bf Subcase A}: Suppose that $\widehat{D} \neq 0$. By the Remark \ref{R:contour-special-zeros}, we define a constant $d$ as follows:

            \begin{equation}\label{E:d-definition}
            d:=\left\{
            \begin{array}{ll}
              \big|\log |\Delta_+|-\log |\Delta_-|\big|, & \hbox{if $|\Delta_+| \neq |\Delta_-|$;} \\
             1, & \hbox{if $|\Delta_+| = |\Delta_-|$.}
            \end{array}
            \right.
            \end{equation}\smallskip

            \noindent It is easy to see from (\ref{E:d-definition}) that we must have $d>0$. We distinguish two cases between \textbf{(1)} $|\Delta_+| \neq |\Delta_-|$ and \textbf{(2)} $|\Delta_+|=|\Delta_-|$. \smallskip

            \noindent \textbf{(1)} If $|\Delta_+| \neq |\Delta_-|$, then we define, for each integer $k\ge k_0$ for some suitably large positive integer $k_0$, the line segments $\ell_1(\widehat{g}),\,\ell_2(\widehat{g}),\,\ell_3(\widehat{g})$ and $\ell_4(\widehat{g})$ as follows:

            \begin{equation}\label{E:case(ii)-contour}
            \begin{split}
            \ell_1(\widehat{g})&:=\bigg\{(2k-1)\pi+\theta_+ +i\bigg(\log|\Delta_+|+\frac{dy}{2}\bigg) \,:\, -1 \le y \le 1\bigg\},\\
            \ell_2(\widehat{g})&:=\bigg\{(4k+1)\pi+\theta_+ +i\bigg(\log|\Delta_+|+\frac{dy}{2}\bigg) \,:\, -1 \le y \le 1\bigg\},\\
            \ell_3(\widehat{g})&:=\bigg\{x+\theta_+ +i\bigg(\log|\Delta_+|-\frac{d}{2}\bigg) \,:\, (2k-1)\pi \le x \le (4k+1)\pi\bigg\},\\
            \ell_4(\widehat{g})&:=\bigg\{x+\theta_+ +i\bigg(\log|\Delta_+|+\frac{d}{2}\bigg) \,:\, (2k-1)\pi\le x \le (4k+1)\pi\bigg\}.
            \end{split}
            \end{equation}\smallskip

            \noindent Then the line segments $\ell_j(\widehat{g})\,(j=1,\,2,\,3,\,4)$ are concatenated to form the rectangular contour $\Omega(\widehat{g},\,k)$.  We also form the set $\D \Omega(\widehat{g})=\bigcup_{k=1}^{+\infty} \Omega(\widehat{g},\,k)$, see Figure 2.\smallskip

      \begin{figure*}[h]
      \begin{center}
      \includegraphics[width=0.6\textwidth, height=0.36\textheight]{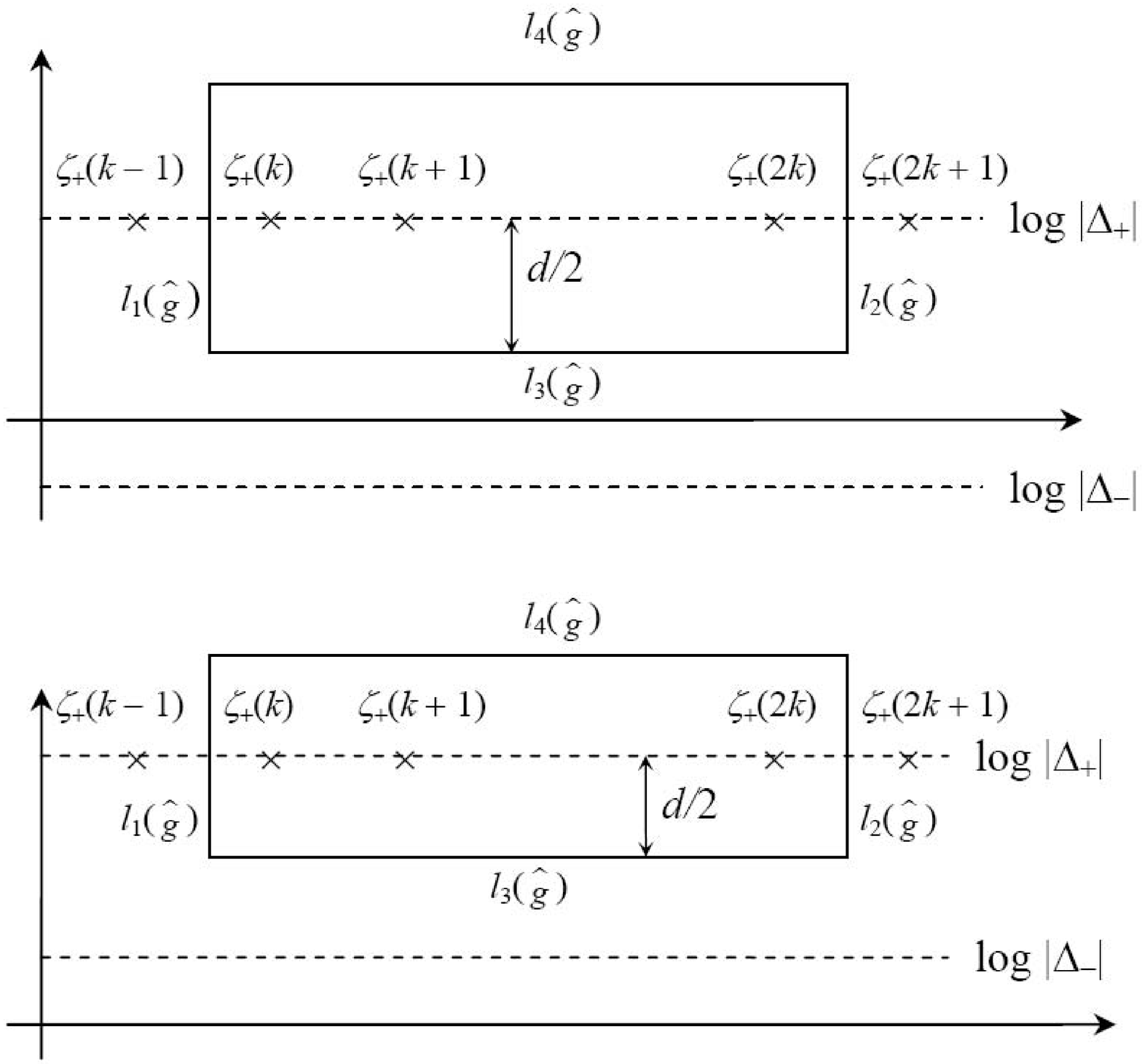}
      \caption{The contour $\Omega(\widehat{g},\,k)$ when $|\Delta_+| \neq |\Delta_-|$.}\label{F:Figure2}
      \end{center}
      \end{figure*}\smallskip

       Instead of inequality (\ref{E:main-inequality}), we shall show that the inequality

      \begin{equation}\label{E:main-inequality-hat}
      |\widehat{y}(\zeta)-\widehat{g}(\zeta)|<|\widehat{g}(\zeta)|
      \end{equation}\smallskip

      \noindent holds on $\Omega(\widehat{g},\,k)$ for all $k$ sufficiently large.\smallskip

      On the one hand, we note from the Lemma \ref{L:contour-special}(a), the Remark \ref{R:contour-special-zeros} and the definition (\ref{E:case(ii)-contour}) that for every integer $k$, exactly $k+1$ distinct zeros of $\widehat{g}(\zeta)$ lie \textit{inside} the $\Omega(\widehat{g},\,k)$ but all $\zeta_-$ lie \textit{outside} the $\Omega(\widehat{g},\,k)$, see Figure \ref{F:Figure2} for an illustration. Therefore we must have the fact that  $\widehat{g}(\zeta)$ \textit{does not} pass through any zero along the $\Omega(\widehat{g},\,k)$ for every positive integer $k$. In other words, there exists a positive constant $\Delta_1(k)$, depending only on $k$, such that the inequality

      \begin{align}\label{E:relation-2}
      |\widehat{g}(\zeta)|>\Delta_1(k)>0
      \end{align}\smallskip

      \noindent holds on $\Omega(\widehat{g},\,k)$ for every positive integer $k$. To obtain the desired inequality (\ref{E:main-inequality-hat}), we must show that the constant $\Delta_1(k)$ can be chosen \textit{independent} of $k$. To see this, we note that for each positive integer $k$, we have

      \begin{align}\label{E:relation-3}
      \e^{\pm i\zeta}=\left\{
                        \begin{array}{ll}
                           -|\Delta_+|^{\mp 1}\e^{\mp \frac{dy}{2}}\e^{\pm i\theta_+}, & \hbox{if $\zeta \in \ell_1(\widehat{g}) \cup \ell_2(\widehat{g})$;} \\
                           |\Delta_+|^{\mp 1}\e^{\pm \frac{d}{2}}\e^{\pm i(x+\theta_+)}, & \hbox{if $\zeta \in \ell_3(\widehat{g})$;} \\
                           |\Delta_+|^{\mp 1}\e^{\mp \frac{d}{2}}\e^{\pm i(x+\theta_+)}, & \hbox{if $\zeta \in \ell_4(\widehat{g})$,}
                        \end{array}
                      \right.
      \end{align}\smallskip

      \noindent and so

      \begin{align}\label{E:relation-4}
      \widehat{g}(\zeta)=\left\{
                           \begin{array}{ll}
                             -\widehat{C}|\Delta_+|^{-1}\e^{-\frac{dy}{2}}\e^{i\theta_+}
                             -\widehat{D}|\Delta_+|\e^{\frac{dy}{2}}\e^{-i\theta_+}+\widehat{\sigma}_n, & \hbox{if $\zeta \in \ell_1(\widehat{g}) \cup \ell_2(\widehat{g})$;} \\
                             \widehat{C}|\Delta_+|^{-1}\e^{\frac{d}{2}}\e^{i(x+\theta_+)}
                             & \\
                             \qquad  +\widehat{D}|\Delta_+|\e^{-\frac{d}{2}}\e^{-i(x+\theta_+)} +\widehat{\sigma}_n, & \hbox{if $\zeta \in \ell_3(\widehat{g})$;} \\
                             \widehat{C}|\Delta_+|^{-1}\e^{-\frac{d}{2}}\e^{i(x+\theta_+)}
                             & \\
                             \qquad +\widehat{D}|\Delta_+|\e^{\frac{d}{2}}\e^{-i(x+\theta_+)}
                             +\widehat{\sigma}_n, & \hbox{if $\zeta \in \ell_4(\widehat{g})$.}
                           \end{array}
                         \right.
      \end{align}\smallskip

      \noindent Hence this implies that $\Delta_1(k)$ can be chosen \textit{independent} of $k$. We denote this positive number to be $\Delta_1$, thus we have the inequality

      \begin{equation}\label{E:relation-5}
      |\widehat{g}(\zeta)|>\Delta_1>0
      \end{equation}\smallskip

      \noindent holds on $\Omega(\widehat{g},\,k)$ for every positive integer $k$.\smallskip

      On the other hand, it follows from (\ref{E:Modify-soln}) and the chosen function (\ref{E:g-hat-definition}) that

      \begin{align}\label{E:relation-6}
      |\widehat{y}(\zeta)-\widehat{g}(\zeta)|&=\Big|\widehat{C}\e^{i\zeta}O(\zeta^{-1})
      +\widehat{D}\e^{-i\zeta}O(\zeta^{-1})+O(\zeta^{-1})+
      \sum_{j=1}^{n-1}\widehat{\sigma}_j\zeta^{\mu_j-\frac{1}{2}}[1+O(\zeta^{-1})]\Big| \notag \\
      &\le \frac{\Delta_2|\e^{i\zeta}|}{|\zeta|}+\frac{\Delta_3|\e^{-i\zeta}|}{|\zeta|}+O(|\zeta|^{-1}) +\sum_{j=1}^{n-1} O(|\zeta|^{\Re(\mu_j)-\frac{1}{2}}),
      \end{align}\smallskip

      \noindent where $\Delta_2$ and $\Delta_3$ are some fixed positive constants. Since $\mu_n=\frac{1}{2}$, the definition (\ref{E:kappa-definition}) implies that $\kappa$ is negative and $\kappa \ge -1$, $\kappa \ge \Re(\mu_j)-\frac{1}{2}$ for all $1 \le j \le n-1$. Thus the relations (\ref{E:relation-3}) and (\ref{E:relation-6}) imply that

      \begin{align}\label{E:relation-7}
      |\widehat{y}(\zeta)-\widehat{g}(\zeta)|&\le \frac{\Delta_2|\e^{i\zeta}|}{|\zeta|}+\frac{\Delta_3|\e^{-i\zeta}|}{|\zeta|}+O(|\zeta|^\kappa) \notag\\
      & < \frac{\Delta_4}{k\pi}+\frac{\Delta_5}{(k\pi)^{|\kappa|}},
      \end{align}\smallskip

      \noindent holds on the contour $\Omega(\widehat{g},\,k)$, where $\Delta_4$ and $\Delta_5$ are two fixed positive constants independent of $k$. Hence we obtain from (\ref{E:relation-5}) and (\ref{E:relation-7}) that the desired inequality (\ref{E:main-inequality-hat}) holds on $\Omega(\widehat{g},\,k)$ for all sufficiently large $k$.\medskip

      \noindent \textbf{(2)} If $|\Delta_+|=|\Delta_-|$, then the definition (\ref{E:d-definition}) gives $d=1$. In addition, all the $\zeta_+(k)$ and $\zeta_-(k)$ lie on the same straight line $L=L_+=L_-$ (see Remark \ref{R:contour-special-zeros}) so that for every integer $k$, we have $|\zeta_+(k)-\zeta_-(k)|=|\theta_+-\theta-|$. Thus, there are two possibilities: (i) $\theta_+=\theta_-$ and (ii) $\theta_+ \neq \theta_-$.\smallskip

      \begin{itemize}
        \item[(i)] If $\theta_+=\theta_-$, then $\zeta_+(k)=\zeta_-(k)$ for every integer $k$. Hence the above contour $\Omega(\widehat{g},\,k)$ and the argument leading to the inequality (\ref{E:main-inequality-hat}) can be applied without any change.\smallskip

        \item[(ii)] If $\theta_+ \neq \theta_-$, then it may happen that $\ell_1(\widehat{g})$ or $\ell_2(\widehat{g})$ passes through the zeros $\zeta_-(k-1),\,\zeta_-(k),\,\zeta_-(2k)$ or $\zeta_-(2k+1)$, so we need to modify the contour $\Omega(\widehat{g},\,k)$ defined in (\ref{E:case(ii)-contour}). In fact, we can replace $(2k-1)\pi$ and $(4k+1)\pi$ by $2k\pi-(\theta_+-\theta_-)/2$ and $4k\pi-(\theta_+-\theta_-)/2$ respectively in the definitions (\ref{E:case(ii)-contour}). We then denote the modified line segments by $\ell_1'(\widehat{g}),\,\ell_2'(\widehat{g}),\,\ell_3'(\widehat{g})$ and $\ell_4'(\widehat{g})$ respectively:

            \begin{equation*}\label{E:case(ii)-modified-contour}
            \begin{split}
            \ell_1'(\widehat{g})&:=\bigg\{2k\pi-\frac{\theta_+-\theta_-}{2}+\theta_+ +i\bigg(\log|\Delta_+|+\frac{dy}{2}\bigg) \,:\, -1 \le y \le 1\bigg\},\\
            \ell_2'(\widehat{g})&:=\bigg\{4k\pi-\frac{\theta_+-\theta_-}{2}+\theta_+ +i\bigg(\log|\Delta_+|+\frac{dy}{2}\bigg) \,:\, -1 \le y \le 1\bigg\},\\
            \ell_3'(\widehat{g})&:=\bigg\{x+\theta_+ +i\bigg(\log|\Delta_+|-\frac{d}{2}\bigg) \,:\, 2k\pi-\frac{\theta_+-\theta_-}{2} \le x \le 4k\pi-\frac{\theta_+-\theta_-}{2}\bigg\},\\
            \ell_4'(\widehat{g})&:=\bigg\{x+\theta_+ +i\bigg(\log|\Delta_+|+\frac{d}{2}\bigg) \,:\, 2k\pi-\frac{\theta_+-\theta_-}{2} \le x \le 4k\pi-\frac{\theta_+-\theta_-}{2}\bigg\}.
            \end{split}
            \end{equation*}\smallskip

            Then the contour and the infinite strip are defined similarly and denoted by $\Omega'(\widehat{g},\,k)$ and $\Omega'(\widehat{g})$ respectively. See Figure 3 below.

            \begin{figure*}[h]
            \begin{center}
            \includegraphics[width=0.85\textwidth, height=0.2\textheight]{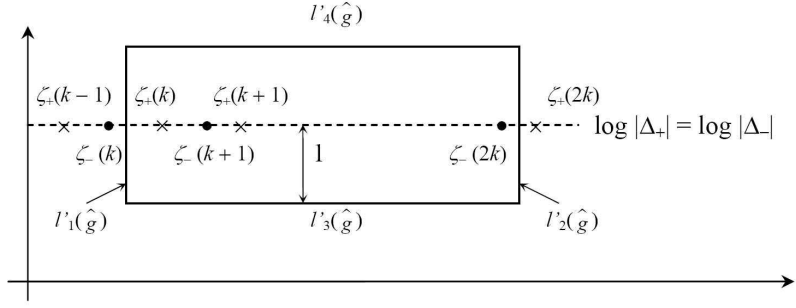}
            \caption{The modified contour $\Omega'(\widehat{g},\,k)$ when $|\Delta_+|=|\Delta_-|$.}\label{F:Figure3}
            \end{center}
            \end{figure*}\smallskip

            \noindent Since  $d=1$ in (\ref{E:d-definition}), thus the relations (\ref{E:relation-3}) and (\ref{E:relation-4}) are replaced by

            \begin{align*}
            \e^{\pm i\zeta}=\left\{
                        \begin{array}{ll}
                          |\Delta_+|^{\mp 1} \e^{\mp \frac{y}{2}} \e^{\pm (\frac{\theta_+ +\theta-}{2})i}, & \hbox{if $\zeta \in \ell_1'(\widehat{g}) \cup \ell_2'(\widehat{g})$}; \\
                          |\Delta_+|^{\mp 1}\e^{\pm \frac{1}{2}} \e^{\pm i(x+\theta_+)}, & \hbox{if $\zeta \in \ell_3'(\widehat{g})$;} \\
                          |\Delta_+|^{\mp 1}\e^{\mp \frac{1}{2}} \e^{\pm i(x+\theta_+)}, & \hbox{if $\zeta \in \ell_4'(\widehat{g})$,}
                        \end{array}
                      \right.
      \end{align*}\smallskip

      \noindent and

      \begin{align*}
      \widehat{g}(\zeta)=\left\{
                           \begin{array}{ll}
                             \widehat{C}|\Delta_+|^{-1} \e^{-\frac{y}{2}}\e^{i(\theta_+ +\theta-)/2} \\
                             \qquad+\widehat{D}|\Delta_+| \e^{\frac{y}{2}} \e^{-i(\theta_+ +\theta-)/2}+\widehat{\sigma}_n, & \hbox{if $\zeta \in \ell_1'(\widehat{g}) \cup \ell_2'(\widehat{g})$;} \\
                             \widehat{C}|\Delta_+|^{-1}\e^{\frac{1}{2}}\e^{i(x+\theta_+)}
                             & \\
                             \qquad  +\widehat{D}|\Delta_+|\e^{-\frac{1}{2}}\e^{-i(x+\theta_+)} +\widehat{\sigma}_n, & \hbox{if $\zeta \in \ell_3'(\widehat{g})$;} \\
                             \widehat{C}|\Delta_+|^{-1}\e^{-\frac{1}{2}}\e^{i(x+\theta_+)}
                             & \\
                             \qquad +\widehat{D}|\Delta_+|\e^{\frac{1}{2}}\e^{-i(x+\theta_+)}
                             +\widehat{\sigma}_n, & \hbox{if $\zeta \in \ell_4'(\widehat{g})$}
                           \end{array}
                         \right.
      \end{align*}\smallskip

            \noindent respectively. Thus, the inequalities (\ref{E:relation-5}), (\ref{E:relation-6}) and (\ref{E:relation-7}) can be similarly deduced with a possibly different set of the positive constants $\Delta_1,\,\Delta_2,\ldots,\,\Delta_5$.
      \end{itemize}

      \begin{remark}\rm It is trivial to check that there are totally $2k+1$ distinct zeros inside the modified contour $\Omega'(\widehat{g},\,k)$ for every positive integer $k$.
      \end{remark}

        \item[] {\bf Subcase B}: Suppose that $\widehat{D}=0$. Then it is easy to see that this can be regarded as the degenerated case in \textbf{Subcase A(1)(i)} with the constant $|\Delta_+|$ and the straight line $L_+$ replaced by $|\Delta_0|$ and $L_0$ respectively. \smallskip
      \end{itemize}\smallskip

We may now continue the proof of the Theorem \ref{T:exponent-of-convergence}.\newline

So Rouch\'e's theorem implies that, the functions $\widehat{y}(\zeta)$ and $g(\zeta)$ (resp. $\widehat{g}(\zeta)$) have the same number of zeros inside $\Omega(g,\,k)$ (resp. $\Omega(\widehat{g},\,k)$ or $\Omega'(\widehat{g},\,k)$). The Proposition \ref{L:g-number-zeros} (resp. Lemma \ref{L:contour-special}) asserts that $g(\zeta)$ (resp. $\widehat{g}(\zeta)$), and hence $\widehat{y}(\zeta)$, has \textit{infinitely many distinct zeros} inside $\Omega(g)$ (resp. $\Omega(\widehat{g})$ or $\Omega'(\widehat{g})$). Let $\mathfrak{n}(D,\, f)$ denote the number of zeros of the function $f(z)$ inside the set $D$. Then given any $0<\varepsilon<1$, there exists an infinite sequence $\{\zeta_n\}$ of zeros of $\widehat{y}(\zeta)$, and hence of $y(\zeta)$, with $|\zeta_n|=\rho_n$ inside $\Omega(g)$ (resp. $\Omega(\widehat{g})$ or $\Omega'(\widehat{g})$) such that\smallskip

\begin{align*}
\mathfrak{n}\left(\Omega(g),\, y(\zeta)\right) \ge \rho_n^{1-\varepsilon}& \quad (\mbox{resp. $\mathfrak{n}\left(\Omega(\widehat{g}),\, y(\zeta)\right) \ge \rho_n^{1-\varepsilon}$ or $\mathfrak{n}\left(\Omega'(\widehat{g}),\, y(\zeta)\right) \ge \rho_n^{1-\varepsilon}$})
\end{align*}\smallskip

\noindent for all sufficiently large $n$. By the substitution $L\e^{Mz}=\zeta$, where $z=r\e^{i\theta}$ and $\zeta=\rho \e^{i\varphi}$. Then for choosing $r_n$ {and $\theta_n$ such that $r_n \to +\infty$ as $n \to +\infty$ and $\theta_n+b=0$ for all positive integers $n$, where $b$ is the principal argument of $M$, we must have $\rho_n=|L|\e^{|M|r_n} \to +\infty$ as $n \to +\infty$ and then}\smallskip

\begin{align*}
\frac{\log \mathfrak{n} \Big( \left\{z \,:\, |z| \le \frac{1}{|M|}\log\rho_n\right\},\,f(z)\Big)}{\log r_n}&\ge\frac{\log\mathfrak{n}\left( \left\{\zeta \,:\,\frac{|L|}{\rho_n} \le |\zeta| \le |L|\rho_n,\, \arg \zeta \not =\pi\right\},\, y(\zeta) \right)}{\log r_n}\\
&\ge \frac{\log (\rho_n)^{1-\epsilon}}{\log r_n}\\
&=\frac{(1-\epsilon)\log \rho_n}{\log \log \rho_n} \to +\infty
\end{align*}\smallskip

\noindent as $n \to +\infty$ which implies that $\lambda (f)=+\infty$, thus completing the proof of the Theorem.
\end{proof}

We can continue the proof of the Theorem \ref{T:Chiang-Yu} now.\newline

Recall that $f(z)=\e^{-Nz}y(Le^{Mz})$, where $y(\zeta)$ is a solution to the equation (\ref{E:generalized-Lommel}). So the requirement  $\lambda(f)<+\infty$ is \textit{independent of the branches} of the function $y(\zeta)$. It follows from the Theorem \ref{T:exponent-of-convergence} that we must have $C=D=0$ and hence so are $A=B=0$. Hence the solution (\ref{E:f-solution}) is expressed in the form

\begin{equation}\label{E:lommel-sum}
f(z)=\e^{-Nz} \sum_{j=1}^n \sigma_j S_{\mu_j,\, \nu}(L\e^{Mz}).
\end{equation}\smallskip

To complete the proof of the Theorem \ref{T:Chiang-Yu}, we need to prove that when $\sigma_j$ is non-zero, then $\mu_j$ and $\nu$ must satisfy either

\begin{equation}\label{E:desired-equations}
\cos\bigl({\mu_j+\nu\over 2}\pi\bigr)=0\quad\textrm{or}\quad 1+\e^{(-\mu_j+\nu)\pi i}=0,
\end{equation}\smallskip

\noindent where $j \in \{1,\,2,\ldots,\, n\}$. Following a similar idea as in \cite[p.  145]{Chiang:Yu:2008}, we have from the Remark \ref{R:Bessel-lommel-independence-branches} that $S_{\mu_1,\,\nu}(L\e^{Mz}),\,S_{\mu_2,\,\nu}(L\e^{Mz}),\ldots,\,S_{\mu_n,\,\nu}(L\e^{Mz})$ are entire functions in the $z$-plane and that each $S_{\mu_j,\,\nu}(L\e^{Mz})$ ($j=1,\,2,\ldots,\,n$) is \textit{independent of the branches} of $S_{\mu_j,\,\nu}(\zeta)$. We choose for a  $j \in \{1,\,2,\ldots,\,n\}$ such that $\sigma_j \neq 0$. So we can rewrite the solution (\ref{E:lommel-sum}) as

\begin{equation}\label{E:lommel-sum-rewrite}
f(z)=\sigma_j\e^{-Nz}S_{\mu_j,\,\nu}(L\e^{Mz}\e^{-m\pi i})+\sum_{k=1 \atop k \neq j}^n \sigma_k \e^{-Nz}S_{\mu_k,\,\nu}(L\e^{Mz}),
\end{equation}\smallskip

\noindent where the function $S_{\mu_j,\,\nu}(\zeta)$ belongs to the branch $-(m+1)\pi < \arg \zeta <-(m-1)\pi$ and the other Lommel functions $S_{\mu_1,\,\nu}(\zeta),\,\ldots,\,S_{\mu_{j-1},\,\nu}(\zeta),\,S_{\mu_{j+1},\,\nu}(\zeta),\,
\ldots,\,S_{\mu_n,\,\nu}(\zeta)$ are in the principal branch $-\pi <\arg \zeta<\pi$ and $m$ is an arbitrary but otherwise fixed non-zero integer.\smallskip

\begin{remark}\label{R:4.1}\rm
We note again that in the following discussion that we only consider the case $\mu_j-\nu=-2p_j-1$. The other case $\mu_j+\nu=-2p_j-1$ can be dealt with similarly  by applying the property that each $S_{\mu_j,\,\nu}(\zeta)$ is an even function of $\nu$.
\end{remark}\smallskip

Suppose that $\mu_j-\nu=-2p_j-1$ for some non-negative integer $p_j=p$. If $-\nu \not\in \{0,\,1,\,2,\ldots\}$ or $\nu=0$, then it follows from the Lemma \ref{L:lommel-continuation-general-2}(a) and (b) that the solution (\ref{E:lommel-sum-rewrite}) can be expressed in the form (\ref{E:f-solution-hankel}) with

\begin{equation*}
C=\left\{
    \begin{array}{ll}
      \D \frac{\sigma_j (-1)^pK_+'}{2^{2p}p!(1-\nu)_p}, & \hbox{if $-\nu \not\in \{0,\,1,\,2,\ldots\}$;} \\
      \D \frac{\sigma_j(-1)^pK_+''}{2^{2p}(p!)^2}, & \hbox{if $\nu=0$,}
    \end{array}
  \right.
\end{equation*}
and
\begin{equation*}
D=\left\{
    \begin{array}{ll}
      \D \frac{\sigma_j (-1)^pK_-'}{2^{2p}p!(1-\nu)_p}, & \hbox{if $-\nu \not\in \{0,\,1,\,2,\ldots\}$;} \\
      \D \frac{\sigma_j(-1)^pK_-''}{2^{2p}(p!)^2}, & \hbox{if $\nu=0$,}
    \end{array}
  \right.
\end{equation*}\smallskip

\noindent where $K_\pm'$ and $K_\pm''$ are the constants defined in (\ref{E:Ks-definition}). In order to apply the Theorem \ref{T:exponent-of-convergence}, we may follow closely the argument used in \cite[Proposition 4.4 (i) and (ii)]{Chiang:Yu:2008}, where if we have $C \neq 0$ or $D \neq 0$ for any integer $m$, then we will obtain a contradiction to the free choice of the integer $m$. Hence $\lambda(f)=+\infty$ and then either $C$ or $D$ must be zero.  This implies that (\ref{E:desired-equations}) holds, as required.\smallskip

If $\nu=-n$ for a positive integer $n$, then it follows from Lemma \ref{L:lommel-continuation-general-2}(c) that the solution (\ref{E:lommel-sum-rewrite}) (with $m$ replaced by $2m$) is given by
\begin{align*}
f(z)&=\frac{(-1)^{n+p}\sigma_j\e^{-Nz}}{2^{2p+n}n!(p!)^2(1+n)_p}(L\e^{Mz})^{-n}\\
&\qquad\times\bigg\{B_n(L\e^{Mz}) \Big[K_+''H_0^{(1)}(L\e^{Mz})+K_-''H_0^{(2)}(L\e^{Mz})\Big]\bigg. \notag\\
&\qquad -(L\e^{Mz})C_n(L\e^{Mz}) \Big[K_+''H_1^{(1)}(L\e^{Mz})+K_-''H_1^{(2)}(L\e^{Mz})\Big]\bigg\} \notag\\
&\quad+\sum_{j=1}^n \sigma_j \e^{-Nz}S_{\mu_j,\,\nu}(L\e^{Mz}). \notag
\end{align*}
It is obvious that the above expression is \textit{not} in the form (\ref{E:f-solution-hankel}) so that the Theorem \ref{T:exponent-of-convergence} does not apply in this case. In order to find an alternative approach to show  $\lambda(f)=+\infty$, we show that the function $h(\zeta)$ defined by:

\begin{align}\label{E:h-definition}
h(\zeta)&:=\zeta^{-n}\Big\{B_n(\zeta)\big[K_+''H_0^{(1)}(\zeta)+K_-''H_0^{(2)}(\zeta)\big] \\
&\qquad-\zeta C_n(\zeta)\big[K_+''H_1^{(1)}(\zeta)+K_-''H_1^{(2)}(\zeta)\big]\Big\}+\sum_{j=1}^n\sigma_j S_{\mu_j,\,\nu}(\zeta) \notag
\end{align} has \textit{infinitely many} zeros in the principal branch of $H_0^{(1)}(\zeta),\,H_0^{(2)}(\zeta),\,
H_1^{(1)}(\zeta),\,H_1^{(2)}(\zeta)$ and $S_{\mu_j,\,\nu}(\zeta)$.
Therefore we suppose that $-\pi<\arg \zeta<\pi$. Then the asymptotic expansions (\ref{E:hankel1-asy}), (\ref{E:hankel2-asy}) and (\ref{E:lommel-asymptotic-expansion}) with putting $p=1$ yield

\begin{align}\label{E:h-hat}
\widehat{h}(\zeta)&:=\bigg(\frac{\pi\zeta}{2}\bigg)^{\frac{1}{2}}h(\zeta) \notag\\
&=\zeta^{-n}\bigg[D_n^+(\zeta)K_+''\e^{-i\frac{\pi}{4}}\e^{i\zeta}+
D_n^-(\zeta)K_-''\e^{i\frac{\pi}{4}}\e^{-i\zeta}+D_n^+(\zeta)K_+''\e^{-i\frac{\pi}{4}}\e^{i\zeta}O(\zeta^{-1}) \\
&\qquad+D_n^-(\zeta)K_-''\e^{i\frac{\pi}{4}}\e^{-i\zeta}O(\zeta^{-1})\bigg]+\sum_{j=1}^n\widehat{\sigma}_j \zeta^{\mu_j-\frac{1}{2}}\big[1+O(\zeta^{-1})\big], \notag
\end{align}\smallskip

\noindent where $D_n^\pm(\zeta)=B_n(\zeta) \pm i\zeta C_n(\zeta)$. To find the number of zeros of $h(\zeta)$ in $-\pi<\arg \zeta<\pi$, we need the following result: \smallskip

\begin{lemma}\label{L:Dn-degree} Suppose that $n$ is a positive integer. Then at least one of  $D_n^+(\zeta)$ or $D_n^-(\zeta)$ has degree $n$.
\end{lemma}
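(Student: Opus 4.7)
The plan is to prove the slightly stronger statement that \emph{both} $D_n^+(\zeta)$ and $D_n^-(\zeta)$ have degree exactly $n$. The argument is an induction on $n$ based on the recurrences (\ref{E:recurrence-A_n-B_n-C_n}), and its strategic core is a parity observation that rules out destructive cancellation between $B_n$ and $\pm i\zeta C_n$.

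First I would show, by induction on $n$, that $B_n(\zeta)$ and $C_n(\zeta)$ contain only \emph{even} powers of $\zeta$. This is immediate since $B_1\equiv 0$ and $C_1\equiv 1$ are even-power, and each operation appearing on the right of the recurrences — multiplication by a scalar, multiplication by $\zeta^2$, and the map $P(\zeta)\mapsto \zeta P'(\zeta)$ (which sends $a_{2k}\zeta^{2k}$ to $2k\,a_{2k}\zeta^{2k}$) — preserves the even-power property. It follows that $\pm i\zeta C_n(\zeta)$ contains only \emph{odd} powers, so no cancellation can occur between $B_n$ and $\pm i\zeta C_n$, and hence $\deg D_n^\pm = \max\{\deg B_n,\,1+\deg C_n\}$ for both signs.

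Next I would establish by induction the following parity-dependent sharp degree estimates: if $n$ is even then $\deg B_n = n$ with nonzero leading coefficient and $\deg C_n \le n-2$; if $n$ is odd then $\deg C_n = n-1$ with nonzero leading coefficient and $\deg B_n \le n-1$. For even $n$, the only term on the right of $B_n = -2(n-1)B_{n-1}+\zeta B_{n-1}'-\zeta^2 C_{n-1}$ that can reach degree $n$ is $-\zeta^2 C_{n-1}$, whose leading coefficient is, up to sign, that of $C_{n-1}$ — nonzero by the odd-$(n-1)$ case of the induction hypothesis — so no cancellation with the lower-degree contributions is possible. For odd $n$, the only term on the right of $C_n = -2(n-1)C_{n-1}+B_{n-1}+\zeta C_{n-1}'$ that can reach degree $n-1$ is $B_{n-1}$, whose leading coefficient is nonzero by the even-$(n-1)$ case. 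The auxiliary bounds $\deg B_n\le n-1$ (odd $n$) and $\deg C_n\le n-2$ (even $n$) follow from the crude degree count of the recurrences combined with the even-power constraint from Step 1 (which excludes the odd integer $n$ or $n-1$ as an actual degree).

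Combining the two steps: for even $n$ we have $\deg B_n=n$ while $1+\deg C_n\le n-1$, so $\deg D_n^\pm = n$; for odd $n$ we have $1+\deg C_n = n$ while $\deg B_n\le n-1$, so again $\deg D_n^\pm = n$. In particular at least one of $D_n^+$, $D_n^-$ (in fact both) has degree $n$, as required. The only genuinely nontrivial point — and the main obstacle to a naive induction — is verifying the non-vanishing of the leading coefficient of $B_n$ in the even case and of $C_n$ in the odd case; this is exactly what the parity argument together with the single-term contribution at the top degree secures, making the recurrence behave cleanly at each step.
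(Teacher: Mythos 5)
Your argument is correct, and it in fact proves more than the lemma asserts. The paper also proceeds by induction on $n$ from the recurrences (\ref{E:recurrence-A_n-B_n-C_n}), but it inducts on the disjunctive statement itself: writing $D_{k+1}^\pm=-2kD_k^\pm+\zeta B_k'\pm i\zeta^2C_k'\pm i\zeta D_k^\pm$, it computes the two top coefficients of $D_{k+1}^\pm$ in terms of the leading coefficients $\beta_k,\gamma_k$ of $B_k,C_k$, and observes that if both of $D_{k+1}^+$, $D_{k+1}^-$ dropped degree then $\beta_k=\gamma_k=0$, contradicting the hypothesis that at least one of $D_k^\pm$ has degree $k$. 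You instead analyse $B_n$ and $C_n$ separately: the parity observation (both are even polynomials, so $B_n$ and $\pm i\zeta C_n$ occupy disjoint sets of monomials and cannot cancel) together with the parity-dependent sharp degrees ($\deg B_n=n$, $\deg C_n\le n-2$ for even $n$; $\deg C_n=n-1$, $\deg B_n\le n-1$ for odd $n$), where at each step of the induction exactly one term of the recurrence reaches the top degree, so the leading coefficient is inherited without cancellation. This route buys the stronger conclusion that \emph{both} $D_n^+$ and $D_n^-$ have degree exactly $n$, and it avoids the bookkeeping of the mixed quantity $\beta_k\pm i\gamma_k$ that the paper's version must track across the two sign choices; the price is the extra (entirely routine) structural claim about $B_n$ and $C_n$. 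All the inductive steps you describe check out against the recurrences and the initial data $B_1\equiv0$, $C_1\equiv1$.
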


\begin{proof}[Proof of Lemma \ref{L:Dn-degree}] We let

\begin{equation*}
B_n(\zeta):=\sum_{j=0}^n \beta_j \zeta^j \quad \mbox{and}\quad C_n(\zeta):=\sum_{j=0}^n \gamma_j \zeta^j,
\end{equation*}\smallskip

\noindent where $\beta_1,\ldots,\,\beta_n,\,\gamma_1,\ldots,\, \gamma_n$ are complex constants. We prove the lemma by induction on $n$. When $n=1$, we have $D_1^\pm(\zeta)=\pm i\zeta$, so the statement is true. Assume that it is also true when $n=k$ for a positive integer $k$. Without loss of generality, we may assume that $\deg D_k^+(\zeta)=k$ so that

\begin{equation}\label{E:inductive-assumption}
\beta_k+i\gamma_k \neq 0.
\end{equation}\smallskip

When $n=k+1$, it follows from the recurrence relations (\ref{E:recurrence-A_n-B_n-C_n}) for $B_n(\zeta)$ and $C_n(\zeta)$ that

\begin{equation*}
D_{k+1}^\pm(\zeta)=-2kD_k^\pm(\zeta)+\zeta B_k'(\zeta) \pm i\zeta^2 C_k'(\zeta) \pm i\zeta D_k^\pm(\zeta).
\end{equation*}\smallskip

\noindent It is easy to check that the coefficients of the $\zeta^{k+1}$ in $D_k^\pm(\zeta)$ are given by
$\pm ik\gamma_k \pm i(\beta_k\pm i\gamma_k)$ respectively. If $\deg D_{k+1}^\pm (\zeta) \le k$, then we have $\pm ik\gamma_k\pm i(\beta_k\pm i\gamma_k)=0$ so that both $\gamma_k$ and $\beta_k$ are zero which certainly contradict to our inductive assumption (\ref{E:inductive-assumption}). Hence we must have $\deg D_{k+1}^+(\zeta)=k+1$ or $\deg D_{k+1}^-(\zeta)=k+1$, completing the proof of the lemma.
\end{proof}\smallskip

We can complete the proof of the theorem now.\newline

{We recall that we have assumed $\mu_j-\nu=-2p_j-1$ for some non-negative integer $p_j=p$ and $\nu=-n$ for a positive integer $n$, see the paragraphs following the Remark \ref{R:4.1}.} By the Lemma \ref{L:Dn-degree}, we may suppose that $D_n^\pm (\zeta)=C_\pm \zeta^n+\cdots$ and $\widehat{C}_\pm=C_\pm\e^{\mp i\frac{\pi}{4}}$, where $C_+ \neq 0$. Then the expression (\ref{E:h-hat}) induces

\begin{align*}
\widehat{h}(\zeta)&=\widehat{C}_+K_+''\e^{i\zeta}\big[1+O(\zeta^{-1})\big]
+\widehat{C}_-K_-''\e^{-i\zeta}\big[1+O(\zeta^{-1})\big] +\sum_{j=1}^n\widehat{\sigma}_j \zeta^{\mu_j-\frac{1}{2}}\big[1+O(\zeta^{-1})\big]
\end{align*}\smallskip

\noindent which is in the form (\ref{E:Modify-soln}) with $\widehat{C}$ and $\widehat{D}$ replaced by $\widehat{C}_+K_+''$ and $\widehat{C}_-K_-''$ respectively. Therefore the proof of the Theorem \ref{T:exponent-of-convergence} can be applied without change to show that if at least one of $\widehat{C}_+K_+'' \neq 0$ or $\widehat{C}_-K_-'' \neq 0$, then the function $h(\zeta)$ has infinitely many zeros in $-\pi<\arg \zeta<\pi$ and thus $\lambda(f)=+\infty$, a contradiction. Hence we conclude that $\mu_j-\nu$ cannot be an odd negative integer.\smallskip

Now we can apply the analytic continuation formula in the Lemma \ref{L:lommel-continuation-general-1} with this {fixed} integer $m$ to get

\begin{align}\label{E:lommel-continuation-generalpi}
S_{\mu_j,\, \nu}(L{\rm e}^{M z}{\rm e}^{-m\pi
i})&=K_+P_m(\cos\nu\pi,{\rm e}^{-\mu_j\pi i})H_{\nu}^{(1)}(L{\rm e}^{Mz})\\
&\quad+K_+{\rm e}^{-\nu\pi i}P_{m-1}(\cos\nu\pi,{\rm e}^{-\mu_j\pi i})H_{\nu}^{(2)}(L{\rm e}^{Mz}) \notag\\
&\quad +(-1)^m{\rm e}^{-m\mu_j \pi i}S_{\mu_j,\, \nu}(L{\rm e}^{Mz}),\notag
\end{align}\smallskip

\noindent where $P_m(\cos\nu\pi,{\rm e}^{-\mu_j\pi i})$ is the polynomial as defined in the Lemma \ref{L:lommel-continuation-general-1}. Then the expressions (\ref{E:lommel-sum-rewrite}) and (\ref{E:lommel-continuation-generalpi}) give

\begin{align}\label{E:lommel-sum-modify-further}
f(z)&=K_+\sigma_j{\rm e}^{-Nz} P_m(\cos\nu\pi,{\rm e}^{-\mu_j\pi i})H_{\nu}^{(1)}(L{\rm e}^{Mz})\\
&\quad+K_+\sigma_j{\rm e}^{-Nz} {\rm e}^{-\nu\pi i}P_{m-1}(\cos\nu\pi,{\rm e}^{-\mu_j\pi i})H_{\nu}^{(2)}(L{\rm
e}^{Mz})\notag\\
&\quad +(-1)^m\sigma_j{\rm e}^{-m\mu_j \pi i-Nz}S_{\mu_j,\, \nu}(L{\rm e}^{Mz})+\sum_{k=1 \atop{k\not=j}}^n \sigma_k {\rm e}^{-Nz}S_{\mu_k,\, \nu}(L {\rm e}^{M z}).\notag
\end{align}\smallskip

If either of the coefficients of $H_{\nu}^{(1)}(L{\rm e}^{Mz})$ and $H_{\nu}^{(2)}(L{\rm e}^{Mz})$ in the (\ref{E:lommel-sum-modify-further}) is non-zero, then the Theorem \ref{T:exponent-of-convergence} again implies that $\lambda(f)=+\infty$ which is impossible. Thus we must have

\begin{equation}\label{E:subnormal-zero-anypi}
K_+P_m(\cos\nu\pi, {\rm e}^{-\mu_j\pi i})=0=K_+{\rm e}^{-\nu\pi i}P_{m-1}(\cos\nu\pi,{\rm
e}^{-\mu_j\pi i}).
\end{equation}\smallskip

Now we are ready to derive the equations (\ref{E:desired-equations}), we again recall that the value of
$\lambda(S_{\mu_j,\, \nu}(L {\rm e}^{M z}))$ must be independent of branches of the function $S_{\mu_j,\, \nu}(\zeta)$ which is equivalent to equations (\ref{E:subnormal-zero-anypi}) hold for \textit{each} integer $m$. It is clear from the Lemma \ref{L:lommel-continuation-general-1}(b) that $P_m(\cos \nu\pi,\,{\rm e}^{-\mu_j \pi i})\equiv 0$ and $P_{m-1}(\cos \nu\pi,\,{\rm e}^{-\mu_j \pi i})\equiv 0$ do not hold simultaneously for any integer $m$. Thus $K_+=0$ must hold, \textit{i.e.}, when $\sigma_j\not=0$,

\begin{equation}\tag{6.30}
\cos\bigl({\mu_j+\nu\over 2}\pi\bigr)=0\quad\textrm{or}\quad 1+{\rm
e}^{(-\mu_j+\nu)\pi i}=0,
\end{equation}\smallskip

\noindent where $j \in \{1,\,2,\ldots,\, n\}$. Hence we deduce the first and the second conditions in (\ref{E:conditions}) from the first and the second equations in (\ref{E:desired-equations}) respectively. A detailed deduction can be found in \cite[pp. 154, 155]{Chiang:Yu:2008}. This completes the proof of the theorem.\smallskip

\section{\bf A proof of Corollary \ref{T:Chiang-Yu-Lommel-Zeros}}

If $n=1$, then the assumption gives $\sigma_1 \neq 0$ so that $F(\zeta)=\sigma_1S_{\mu_1,\,\nu}(\zeta) \not \equiv 0$. If $n \ge 2$, then it follows from the Lemma \ref{L:lommel-independence} that $F(\zeta) \not \equiv 0$. Thus the function $F(\zeta)$ as defined in (\ref{E:F-definition}) is non-trivial so that we may suppose that the function (\ref{E:F-definition}) has finitely many zeros in every branch of $\zeta$. Then the entire function

\begin{equation*}
f(z)=F(\e^z)=\sum_{j=1}^n \sigma_j S_{\mu_j,\,\nu}(\e^z)
\end{equation*}\smallskip

\noindent is certainly a solution of the equation (\ref{E:Chiang-Yu}) with $L=M=1$ and $\lambda(f)<+\infty$. Hence the Theorem \ref{T:Chiang-Yu} implies that either $\mu_j+\nu=2p_j+1$ or $\mu_j-\nu=2p_j+1$ for non-negative integers $p_j$, where $j=1,\,2,\ldots,\,n$.\smallskip

Conversely, if either $\mu_j+\nu=2p_j+1$ or $\mu_j-\nu=2p_j+1$ for non-negative integers $p_j$, where $j=1,\,2,\ldots,\,n$, then the Remark \ref{R:lommel-terminate} shows that each $S_{\mu_j,\,\nu}(\zeta)/\zeta^{\mu_j-1}$ is a polynomial in $1/\zeta$ so that $S_{\mu_j,\,\nu}(\zeta)$ has only finitely many zeros in every branch of $\zeta$, where $j=1,\,2,\ldots,\,n$. Thus this implies that the function (\ref{E:F-definition}) has finitely many zeros in every branch of $\zeta$. This completes the proof of the Corollary \ref{T:Chiang-Yu-Lommel-Zeros}.\bigskip

\section{\bf Non-homogeneous {function-theoretic} quantization-type results}

The explicit representation and the zeros distribution of an entire solution $f(z)$ of either the equation

\begin{equation}
\label{E:Chiang-Ismail-1}
    y^{\prime\prime} +{\rm e}^z y=Ky
\end{equation}\smallskip

\noindent or the equation

\begin{equation}
\label{E:Chiang-Ismail-2}
    y^{\prime\prime}+\Big(-\frac14 {\rm e}^{-2z}+\frac12{\rm e}^{-z}\Big)y=Ky
\end{equation}\smallskip

\noindent were studied by Bank, Laine and Langley \cite{Bank:Laine:1982}, \cite{Bank:Laine:Langley:1986} (see also \cite{Shimomura:2002}). Later, Ismail and one of the authors strengthened  \cite{Chiang:Ismail:2006} (announced in \cite{Chiang:Ismail:2002}) their results. In fact, they discovered that the solutions of (\ref{E:Chiang-Ismail-1}) and (\ref{E:Chiang-Ismail-2}) can be solved in terms of Bessel functions and Coulomb Wave functions respectively. Besides, they identified that two classes of classical orthogonal polynomials (Bessel and generalized Bessel polynomials respectively)  appeared in the explicit representation of solutions under the \textit{boundary condition} that the exponent of convergence of the zeros of the solution $f(z)$ is finite, \textit{i.e.}, $\D{\lambda(f)=\lim_{r\to+\infty}\frac{\log n(r,\,\frac{1}{f})}{\log r}<+\infty}$. This also results in a complete determination of the eigenvalues and eigenfunctions of the equations. {We call such pheonomenon a \textit{function-theoretic quantization result} for the differential equations (\ref{E:Chiang-Ismail-1}) and (\ref{E:Chiang-Ismail-2})}.\smallskip

It is also well-known that both equations have important physical applications. For examples, the Eqn. (\ref{E:Chiang-Ismail-1}) is derived as a reduction of a non-linear Schr\"odinger equation in a recent study of Benjamin-Feir instability phenomenon in deep water in \cite{Segur:Henderson:Carter:Hammack:2005}, while the second Eqn. (\ref{E:Chiang-Ismail-2}) is {an exceptional case of} a standard classical diatomic model in quantum mechanics introduced by P. M. Morse in 1929 \cite{Morse:1929}\begin{footnote} {See \cite[pp. 1-4]{Slater:1969} for a historical background of the Morse potential.}\end{footnote} and is a basic model in the recent $\mathcal{PT}-$symmetric quantum mechanics research \cite{Znojil:1999} (see also \cite{Bender:2007}).\smallskip

In \cite[Theorem 6.1]{Chiang:Yu:2008}, the authors considered the following differential equation

\begin{equation}\label{E:reduced-Chiang-Yu}
f''+({\rm e}^z-K)f=\sigma 2^{\mu-1}{\rm e}^{\frac{1}{2}(\mu+1)z}
\end{equation}\smallskip

\noindent which is a special case of the equation (\ref{E:Chiang-Yu}) when $L=2,\,M=\frac{1}{2},\,N=0$ and $n=1$ in the Theorem \ref{T:Chiang-Yu}, where $\D K=\frac{\nu^2}{4}$. They obtained the necessary and sufficient condition on $K$ so that the equation (\ref{E:reduced-Chiang-Yu}) admits subnormal solutions which are related to classical polynomials and / or functions, \textit{i.e.}, Neumann's polynomials, Gegenbauer's {generalization of Neumann's} polynomials, Schl\"{a}fli's polynomials and Struve's functions. This exhibits a kind of {function-theoretic} quantization phenomenon for non-homogeneous equations.\smallskip

Now the following result holds trivially by our main Theorem \ref{T:Chiang-Yu}:

\begin{theorem}\label{T:quantizations} With each choice of parameters as indicated in Table \ref{mytable} below, we have a necessary and sufficient condition on $K$ that depends on the non-negative integer $p$ so that the equation {\rm (\ref{E:reduced-Chiang-Yu})} admits a solution with finite exponent of convergence of zeros. Furthermore, the forms of such solutions are given explicitly in Table \ref{mytable}:\newline

\begin{table}[h]\vspace*{-3ex}\extrarowheight=7pt
\caption[]{Special cases of {\rm (\ref{E:reduced-Chiang-Yu})}.} \label{mytable}
\vspace{0cm}
\begin{tabular}{cccc}
   \hline
   & Cases &  Corresponding $K$ & Solutions with finite exponent \\
   &       &                    & of convergence of zeros        \\ \hline
  (1)& $\mu=1$ & $p^2$ & $2\sigma {\rm e}^{\frac{z}{2}}O_{2p}(2{\rm e}^{\frac{z}{2}})$ \\
  (2)& $\mu=0$ & $\D\frac{(2p+1)^2}{4}$  & $\D \frac{2\sigma}{2p+1}{\rm e}^{\frac{z}{2}}O_{2p+1}(2{\rm e}^{\frac{z}{2}})$\\
  (3)& $\mu=-1$ & $(p+1)^2$   & $\D \frac{\sigma}{4(p+1)}S_{2p+2}(2{\rm e}^{\frac{z}{2}})$ \\
  (4)& $\mu=\nu$ &  $\D \frac{(2p+1)^2}{16}$ & $\sigma 2^{p-\frac{1}{2}}\sqrt{\pi}p!\Big[{\bf H}_{p+\frac{1}{2}}(2{\rm e}^{\frac{z}{2}})-Y_{p+\frac{1}{2}}(2{\rm e}^{\frac{z}{2}})\Big]$ \vspace{3pt}\\ \hline
\end{tabular}
\end{table}
\end{theorem}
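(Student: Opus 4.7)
The plan is to reduce Theorem \ref{T:quantizations} to a direct instance of Theorem \ref{T:Chiang-Yu} by reading off the correct parameters. Comparing \eqref{E:reduced-Chiang-Yu} with the general form \eqref{E:Chiang-Yu}, I would set $n=1$, $N=0$, $M=\tfrac12$, $L=2$, so that $L^2M^2\e^{2Mz}=\e^z$, $N^2-\nu^2M^2=-\nu^2/4$, and $\sigma L^{\mu+1}M^2\e^{[M(\mu+1)-N]z}=\sigma\,2^{\mu-1}\e^{\frac{\mu+1}{2}z}$. Hence \eqref{E:reduced-Chiang-Yu} is precisely \eqref{E:Chiang-Yu} with $K=\nu^2/4$, and Theorem \ref{T:Chiang-Yu} together with Remark \ref{R:Bessel-lommel-independence-branches} tells us that $\lambda(f)<+\infty$ if and only if $A=B=0$ and either $\mu+\nu=2p+1$ or $\mu-\nu=2p+1$ for some non-negative integer $p$, in which case the solution reduces to $f(z)=\sigma S_{\mu,\nu}(2\e^{z/2})$.

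Next I would go through the four rows of Table \ref{mytable} by substituting the prescribed value of $\mu$ into the condition $\mu\pm\nu=2p+1$ and computing $K=\nu^2/4$:
\begin{itemize}
\item Row (1): $\mu=1$ gives $\nu=\pm 2p$, so $K=p^2$.
\item Row (2): $\mu=0$ gives $\nu=\pm(2p+1)$, so $K=(2p+1)^2/4$.
\item Row (3): $\mu=-1$ gives $\nu=\pm 2(p+1)$, so $K=(p+1)^2$.
\item Row (4): $\mu=\nu$ forces $2\nu=2p+1$, so $\nu=p+\tfrac12$ and $K=(2p+1)^2/16$.
\end{itemize}
(Case $\mu-\nu=2p+1$ is excluded in the proof of Theorem \ref{T:Chiang-Yu} itself, so for rows (1)--(3) one may take $\nu$ non-negative without loss, thanks to the fact that $S_{\mu,\nu}$ is an even function of $\nu$.) In each case the corresponding $\sigma S_{\mu,\nu}(2\e^{z/2})$ provides the required solution, so the necessity and sufficiency of the listed conditions on $K$ is immediate.

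The remaining work is the identification of $\sigma S_{\mu,\nu}(2\e^{z/2})$ with the explicit classical special functions in the last column of Table \ref{mytable}. This is done by invoking well-known closed-form reductions of Lommel functions: the identity $S_{1,\,2p}(\zeta)=\zeta O_{2p}(\zeta)$ for Neumann's polynomial $O_n$ handles row~(1); the analogous reduction $S_{0,\,2p+1}(\zeta)=\tfrac{1}{2p+1}\zeta O_{2p+1}(\zeta)$ handles row~(2); the identity $S_{-1,\,2p+2}(\zeta)=\tfrac{1}{4(p+1)}S_{2p+2}(\zeta)$ in terms of Schl\"afli's polynomial $S_n$ handles row~(3); and the half-integer identity $S_{\nu,\nu}(\zeta)=2^{\nu-1}\sqrt{\pi}\,\Gamma(\nu+\tfrac12)\bigl[\mathbf{H}_\nu(\zeta)-Y_\nu(\zeta)\bigr]$ specialised to $\nu=p+\tfrac12$ handles row~(4). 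All of these can be read off from the classical treatments such as \cite[\S 10.7--10.75]{Watson:1944} and are compiled in Appendix~A and \cite{Chiang:Yu:2008}.

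The step that requires the most care is this final bookkeeping of the explicit formulae, because one must track the normalising constants and the exact index conventions for $O_n$, $S_n$, $\mathbf{H}_\nu$, and $Y_\nu$ in order to arrive at the prefactors $2\sigma$, $\tfrac{2\sigma}{2p+1}$, $\tfrac{\sigma}{4(p+1)}$, and $\sigma 2^{p-\frac{1}{2}}\sqrt{\pi}\,p!$ listed in the table. Once the correct classical identities are cited, however, the theorem follows with no additional analytic input beyond Theorem~\ref{T:Chiang-Yu}.
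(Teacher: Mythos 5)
Your proposal is correct and follows essentially the same route as the paper, which derives Theorem \ref{T:quantizations} directly from Theorem \ref{T:Chiang-Yu} applied with $L=2$, $M=\tfrac12$, $N=0$, $n=1$ and $K=\nu^2/4$, and delegates the identification of the degenerate Lommel functions with the Neumann, Schl\"afli and Struve functions to the classical reduction formulae in \cite[10.7--10.75]{Watson:1944} and \cite[Theorem 6.1]{Chiang:Yu:2008}; your parameter bookkeeping and all four identities match. The only slip is the parenthetical claim that the case $\mu-\nu=2p+1$ is ``excluded in the proof of Theorem \ref{T:Chiang-Yu}'' --- what that proof excludes is $\mu-\nu$ being an odd \emph{negative} integer --- but this does not affect your argument, since you correctly reduce to a single case via the evenness of $S_{\mu,\nu}$ in $\nu$ and $K=\nu^2/4$ is insensitive to the sign of $\nu$.
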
\smallskip

Here $O_{2p}(\zeta)$ and $O_{2p+1}(\zeta)$ are the Neumann polynomials of degrees $2p$ and $2p+1$ respectively; $S_p(\zeta)$
is the Schl\"{a}fli polynomial and ${\bf H}_{p+\frac{1}{2}}(\zeta)$ is the Struve function, see \cite[9.1, 9.3, 10.4]{Watson:1944}.\smallskip

\appendix
\section{\bf Preliminaries on Bessel and the Lommel functions}\label{A:Bessel}
\subsection{Bessel functions}
 Let $m$ be an integer. We record here the following analytic continuation formulae for the Bessel functions
\cite[3.62]{Watson:1944}:

\begin{align}
J_\nu(\zeta \e^{m\pi i})&= \e^{m\nu \pi i} J_\nu(\zeta),
\label{E:bessel-first-continuation}\\
Y_\nu(\zeta \e^{m\pi i})&= \e^{-m\nu \pi i} Y_\nu
(\zeta)+ 2i \sin(m\nu\pi) \cot(\nu\pi)
J_\nu(\zeta).\label{E:bessel-second-continuation}
\end{align}\smallskip

\noindent We recall the {\it Bessel functions of the third kind of
order} $\nu$ \cite[3.6]{Watson:1944} are given by

\begin{equation}\label{E:hankels-definition}
H_\nu^{(1)}(\zeta)=J_\nu(\zeta)+iY_\nu(\zeta), \quad
H_\nu^{(2)}(\zeta)=J_\nu(\zeta)-iY_\nu(\zeta).
\end{equation}\smallskip

\noindent They are also called the \textit{Hankel functions of order $\nu$ of the first and second kinds}. The asymptotic expansions of $H_\nu^{(1)}(\zeta)$ and
$H_\nu^{(2)}(\zeta)$ are also recorded as follows:

\begin{equation}\label{E:hankel1-asy}
\left(\frac{\pi
\zeta}{2}\right)^{\frac{1}{2}}H_\nu^{(1)}(\zeta)= \e^{i\left(\zeta-\frac{1}{2}\nu\pi-\frac{1}{4}\pi\right)}
\left[\sum_{k=0}^{p-1}\frac{\left(\frac{1}{2}-\nu\right)_k\left(\frac{1}{2}
+\nu\right)_k} {k!(2i\zeta)^k}+R_{p}^{(1)}(\zeta)\right]
\end{equation}\smallskip

\noindent where $R_{p}^{(1)}(\zeta)=O(\zeta^{-p})$ in $-\pi < \arg \zeta < 2\pi$;

\begin{equation}\label{E:hankel2-asy}
\left(\frac{\pi
\zeta}{2}\right)^{\frac{1}{2}}H_\nu^{(2)}(\zeta)= \e^{-i\left(\zeta-\frac{1}{2}\nu\pi-\frac{1}{4}\pi\right)}
\left[\sum_{k=0}^{p-1}\frac{\left(\frac{1}{2}-\nu\right)_k\left(\frac{1}{2}
+\nu\right)_k} {k!(-2i\zeta)^k}+R_{p}^{(2)}(\zeta)\right]
\end{equation}\smallskip

\noindent where $R_{p}^{(2)}(\zeta)=O(\zeta^{-p})$ in $-2\pi < \arg \zeta < \pi$. See \cite[7.2]{Watson:1944}.\smallskip

\subsection{An asymptotic expansion of $\lommel(\zeta)$ and linear independence of Lommel's functions}\label{A:asymptotic-expansion}
It is known that when $\mu \pm \nu$ are not odd positive integers,
then $S_{\mu,\,\nu}(\zeta)$ has the asymptotic expansion

\begin{align}\label{E:lommel-asymptotic-expansion}
    S_{\mu,\,\nu}(\zeta)&=\zeta^{\mu-1}\left[\sum_{k=0}^{p-1} \frac{(-1)^k
    c_k}{\zeta^{2k}}\right] +O\left(\zeta^{\mu-2p}\right)
\end{align}\smallskip

\noindent for large $|\zeta|$ and $|\arg \zeta|<\pi$, where $p$ is a positive integer. See also \cite[10.75]{Watson:1944}. As a result, we see that the
asymptotic expansions (\ref{E:hankel1-asy}), (\ref{E:hankel2-asy}) and (\ref{E:lommel-asymptotic-expansion}) are valid \textit{simultaneously} in the range $-\pi<\arg \zeta<\pi$.\smallskip

\begin{remark} \label{R:lommel-terminate}\rm It is clear that (\ref{E:lommel-asymptotic-expansion}) is a series in descending powers of $\zeta$ starting from the term $\zeta^{\mu-1}$ and (\ref{E:lommel-asymptotic-expansion}) terminates if one of the numbers $\mu \pm \nu$ is an odd positive integer. In particular, if $\mu-\nu=2p+1$ for some non-negative integer $p$, then we have $K_+=0$ in the analytic continuation formula (\ref{E:lommel-continuation-general}) and thus, in this degenerate case, the formula (\ref{E:lommel-continuation-general}) becomes $S_{2p+1+\nu,\,\nu}(\zeta \e^{-m\pi i})=\e^{-m\nu\pi i} S_{2p+1+\nu,\,\nu}(\zeta)$ for every integer $m$ and $|\arg \zeta|<\pi$.
\end{remark}

The following concerns about the linear independence of the Lommel functions $S_{\mu_j,\,\nu}(\zeta)$.

\begin{lemma}\label{L:lommel-independence} {\cite[Lemma 3.12]{Chiang:Yu:2008}} Suppose $n \ge 2$, and $\mu_j$ and $\nu$ be complex numbers such that $\Re(\mu_j)$ are all distinct for $j=1,2,\ldots, n$. Then the Lommel functions $S_{\mu_1,\, \nu}(\zeta),\,S_{\mu_2,\,
\nu}(\zeta),\ldots$, $S_{\mu_n,\, \nu}(\zeta)$ are linearly independent.
\end{lemma}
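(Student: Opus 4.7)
The plan is to exploit the asymptotic expansion (\ref{E:lommel-asymptotic-expansion}), which shows that every Lommel function $S_{\mu_j,\,\nu}(\zeta)$ has a uniquely determined leading-order behavior $\zeta^{\mu_j-1}$ as $|\zeta|\to+\infty$ in the principal branch. Since the constant $c_0$ appearing in (\ref{E:lommel-asymptotic-expansion}) is equal to $1$ by (\ref{E:ck-special})/(\ref{E:ck}), this leading coefficient never vanishes, even in the degenerate cases where the asymptotic series terminates.

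Concretely, suppose $\sum_{j=1}^n a_j S_{\mu_j,\,\nu}(\zeta)\equiv 0$ for some complex constants $a_1,\ldots,a_n$. After relabeling, we may arrange $\Re(\mu_1)<\Re(\mu_2)<\cdots<\Re(\mu_n)$. Taking $p=1$ in (\ref{E:lommel-asymptotic-expansion}), we have for each $j$ that
\begin{equation*}
S_{\mu_j,\,\nu}(\zeta)=\zeta^{\mu_j-1}\bigl[1+O(\zeta^{-2})\bigr]\quad\text{as }|\zeta|\to+\infty,\ |\arg\zeta|<\pi.
\end{equation*}
Restricting to $\zeta=r>0$ and dividing the identity $\sum_{j=1}^n a_j S_{\mu_j,\,\nu}(r)=0$ by $r^{\mu_n-1}$ gives
\begin{equation*}
a_n\bigl[1+O(r^{-2})\bigr]+\sum_{j=1}^{n-1}a_j\, r^{\mu_j-\mu_n}\bigl[1+O(r^{-2})\bigr]=0.
\end{equation*}
Since $\Re(\mu_j-\mu_n)<0$ for every $j<n$, each term in the sum tends to $0$ as $r\to+\infty$; letting $r\to+\infty$ forces $a_n=0$.

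The argument then iterates: once $a_n=0$, the identity reduces to $\sum_{j=1}^{n-1}a_j S_{\mu_j,\,\nu}(\zeta)\equiv 0$, and the same reasoning applied to the new top index $n-1$ yields $a_{n-1}=0$; continuing, all $a_j=0$, establishing linear independence. I do not foresee a real obstacle here, because the leading coefficient $c_0=1$ is universal and the strict separation of the real parts $\Re(\mu_j)$ guarantees an unambiguous dominant term along the positive real axis; the only point requiring a moment of care is to note that even in the terminating cases (when $\mu_j\pm\nu$ is an odd positive integer, cf.\ Remark~\ref{R:lommel-terminate}) the function $S_{\mu_j,\,\nu}(\zeta)/\zeta^{\mu_j-1}$ is still a polynomial in $1/\zeta$ whose constant term is $1$, so the asymptotics $S_{\mu_j,\,\nu}(r)\sim r^{\mu_j-1}$ remains valid and the cascading argument goes through uniformly.
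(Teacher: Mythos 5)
Your argument is correct and is essentially the standard (and, as far as the cited source \cite[Lemma 3.12]{Chiang:Yu:2008} goes, the same) approach: the paper itself only quotes the lemma, and its proof likewise rests on the asymptotic expansion (\ref{E:lommel-asymptotic-expansion}) with leading coefficient $c_0=1$ together with the ordering of the $\Re(\mu_j)$ to peel off the coefficients one at a time. Your handling of the terminating cases via Remark \ref{R:lommel-terminate} is the right point of care; the only cosmetic slip is that with $p=1$ the relative error in (\ref{E:lommel-asymptotic-expansion}) is $O(\zeta^{-1})$ rather than $O(\zeta^{-2})$, which does not affect the argument.
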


\end{document}